\theoremstyle{plain}
\newtheorem{theorem}{Theorem}
\newtheorem{lemma}[theorem]{Lemma}
\newtheorem{prop}[theorem]{Proposition}
\newtheorem{cor}[theorem]{Corollary}
\newtheorem*{thm}{Theorem}
\newtheorem*{maina}{Theorem A}
\newtheorem*{mainb}{Theorem B}
\newtheorem*{mainc}{Theorem C}
\newtheorem*{maind}{Theorem D}
\theoremstyle{definition}
\DeclareMathOperator{\aug}{aug}
\DeclareMathOperator{\coker}{coker}
\DeclareMathOperator{\ev}{ev}
\DeclareMathOperator{\im}{im}
\DeclareMathOperator{\Hom}{Hom}
\DeclareMathOperator{\vcd}{vcd} 
\begin{document}

\title{Poincar\'e duality complexes with highly connected universal cover}

\author{Beatrice Bleile, Imre Bokor and Jonathan A. Hillman}

\address{Mathematics, School of Science and Technology\\  University of New England, NSW 2351\\ Australia}
\email{bbleile@une.edu.au}

\address{13 Holmes Avenue\\  Armidale, NSW 2350\\ Australia}
\email{ibokor@bigpond.net.au}

\address{School of Mathematics and Statistics\\
     University of Sydney, NSW 2006\\
      Australia }

\email{jonathan.hillman@sydney.edu.au}

\begin{abstract}

Turaev conjectured that the classification, realization and splitting results for Poin\-car{\'{e}} duality complexes of dimension $3$ (\emph{$PD_{3}$-complexes}) generalize to $PD_{n}$-complexes with $(n-2)$-connected universal cover for $n \ge 3$. Baues and Bleile showed that such complexes are classified, up to oriented homotopy equivalence, by the triple consisting of their fundamental group, orientation class and the image of their fundamental class in the homology of the fundamental group, verifying Turaev's conjecture on classification. 

We prove Turaev's conjectures on realization and splitting. 
We show that a triple $(G, \omega, \mu)$, comprising a group, $G$, 
a cohomology class $\omega \in H^{1}\left(G; \mathbb{Z}/2\mathbb{Z}\right)$ 
and a homology class $\mu \in H_{n}(G; \mathbb{Z}^{\omega})$ can be realized by a $PD_{n}$-complex with $(n-2)$-connected universal cover if and only if the Turaev map applied to $\mu$ yields an equivalence. We then show that a $PD_{n}$-complex with $(n-2)$-connected universal cover is a non-trivial connected sum of two such complexes if and only if its fundamental group is a non-trivial free product of groups.

We then consider the indecomposable $PD_n$-complexes of this type. When $n$ is odd the results are similar to those for the case $n=3$.  The indecomposables are either aspherical or have virtually free fundamental group.  When $n$ is even the indecomposables include manifolds which are neither aspherical nor have virtually free fundamental group,  but if the group is virtually free and has no dihedral subgroup of order $>2$ then it has two ends.
\end{abstract}

\keywords{$PD_n$-complex, virtually free}

\subjclass{57N65, 57P10}

\maketitle

\section{Introduction}\label{section:intro}

Hendricks classified Poincar{\'{e}} duality complexes of dimension $3$ (\emph{$PD_{3}$-complexes}) up to oriented homotopy equivalence using the ``fundamental triple''\!,  comprising  the fundamental group, the orientation character and the image of the fundamental class in the homology of the fundamental group.

Turaev \cite{Tu} gave an alternative proof of Hendricks' result and provided necessary and sufficient conditions for a triple $(G, \omega, \mu)$, comprising a group, $G$, a cohomology class, $\omega \in H^{1}\!\left(G ; \mathbb{Z}/2\mathbb{Z}\right)$, and a homology class, $\mu \in H_{3}(G ; \mathbb{Z}^{\omega})$, where $\mathbb{Z}^{\omega}$ denotes the integers, $\mathbb{Z}$, regarded as a right $\mathbb{Z}[G]$-module with respect to the \emph{twisted} structure induced by $\omega$, to be the fundamental triple of a $PD_{3}$-complex. Central to this was that the image of $\mu$ under a specific homomorphism, which we call \emph{the Turaev map}, be an isomorphism in the stable category of $\mathbb{Z}[G]$, that is to say, a homotopy equivalence of $\mathbb{Z}[G]$-modules. 

The results on classification and splitting allowed Turaev to show that a $PD_{3}$-complex is a non-trivial connected sum of two $PD_{3}$-complexes if and only if its fundamental group is a non-trivial free product of groups. He conjectured that these results hold for all $PD_{n}$-complexes with $(n-2)$-connected universal cover.

Baues and Bleile  classified Poincar{\'{e}} duality complexes 
of dimension 4 in \cite{HJB-BB2010}. 
Their analysis showed that a $PD_{n}$-complex $X$,  
with $n\ge 3$, is classified up to oriented homotopy equivalence 
by the triple comprising its $(n-2)$-type, $P_{n-2}(X)$, its orientation character, $\omega=\omega_X \in 
{H^{1}\left(\pi_{1}(X) ; \,  \mathbb{Z}/2\mathbb{Z}\right)}$, 
and its fundamental class, $[X]\in H_{n}(X; \mathbb{Z}^{\omega})$.
(We assume that all spaces have basepoints.
Thus every map $f:X\to{Y}$ has a preferred lift to a map of universal covers.
Hence if $f^*\omega_Y=\omega_X$ there is a well-defined homomorphism 
$H_n(f;\mathbb{Z}^\omega)$, and it is meaningful to
say that $f$ is ``oriented",
i.e., that $f_*[X]=[Y]$.
See  \cite{Ta} for a discussion of this issue.)

They called this the \emph{fundamental triple of $X$}, as it is a generalization of Hendricks' ``fundamental triple'', for the $(n-2)$-type of $X$ determines its fundamental group. Moreover, when the universal cover of the complex is $(n-2)$-connected --- automatically the case when $n=3$  ---  the $(n-2)$-type is an Eilenberg-Mac Lane space of type $(\pi_{1}(X),1)$, so that the $(n-2)$-type  and the fundamental group determine each other completely, reducing their fundamental triple to that of Hendricks.

Turaev's conjecture on classification is a direct consequence:

\begin{thm}
There is an oriented homotopy equivalence between two $PD_{n}$-complexes with $(n-2)$-connected universal cover if and only if their fundamental triples are isomorphic.
\end{thm}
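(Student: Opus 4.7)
The plan is to deduce this essentially as a direct corollary of the Baues--Bleile classification. By that result, two $PD_n$-complexes $X$ and $Y$ (with $n\ge 3$) are oriented homotopy equivalent if and only if there is an isomorphism of their Baues--Bleile triples $(P_{n-2}(X),\omega_X,[X])$ and $(P_{n-2}(Y),\omega_Y,[Y])$. So the task reduces to showing that, \emph{under the hypothesis that the universal cover is $(n-2)$-connected}, an isomorphism of such triples is the same data as an isomorphism of the Hendricks-type fundamental triples $(\pi_1(X),\omega_X,[X])$ and $(\pi_1(Y),\omega_Y,[Y])$.

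First I would record the observation already made in the introduction: when $\widetilde{X}$ is $(n-2)$-connected, the Postnikov section $P_{n-2}(X)$ is a $K(\pi_1(X),1)$, and similarly for $Y$. Because Eilenberg--Mac Lane spaces are determined up to (pointed) homotopy equivalence by their fundamental groups, and because $[\,K(\pi,1),K(\pi',1)\,]_*$ is in natural bijection with $\Hom(\pi,\pi')$, any homotopy equivalence $P_{n-2}(X)\to P_{n-2}(Y)$ corresponds to an isomorphism $\varphi\colon\pi_1(X)\to\pi_1(Y)$, and conversely every such $\varphi$ is realized by a homotopy equivalence, unique up to pointed homotopy. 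This gives the translation of the $(n-2)$-type component of the triple into the group component.

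Next I would check that the remaining data, namely the orientation character $\omega$ and the fundamental class $[X]\in H_n(X;\mathbb{Z}^\omega)$, transport correctly along this correspondence. Since both $\omega_X\in H^1(\pi_1(X);\mathbb{Z}/2\mathbb{Z})$ and $[X]\in H_n(\pi_1(X);\mathbb{Z}^\omega)$ (via the classifying map $X\to K(\pi_1(X),1)$, which under the hypothesis factors through $P_{n-2}(X)$) are naturally defined in terms of $\pi_1$-cohomology and homology, the compatibility conditions $\varphi^*\omega_Y=\omega_X$ and $\varphi_*[X]=[Y]$ agree on the two sides of the correspondence. Putting the two paragraphs together proves both implications of the theorem.

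The only real point requiring care, and hence the place where I would be most careful, is the bookkeeping around basepoints and the preferred lifts of maps to universal covers, so that ``oriented'' is unambiguous in the sense recalled in the parenthetical remark citing \cite{Ta}; once this is set up, the argument is purely formal and no genuine new obstacle arises beyond what is already packaged in the Baues--Bleile classification.
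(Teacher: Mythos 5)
Your proposal is correct and follows the same route the paper takes: the authors likewise deduce the theorem directly from the Baues--Bleile classification by the fundamental triple $(P_{n-2}(X),\omega_X,[X])$, observing that an $(n-2)$-connected universal cover forces $P_{n-2}(X)\simeq K(\pi_1(X),1)$, so that the triple reduces to the Hendricks-type triple $(\pi_1(X),\omega,\mu)$. Your extra care about basepoints and the translation $[K(\pi,1),K(\pi',1)]_*\cong\Hom(\pi,\pi')$ merely makes explicit what the paper leaves as ``a direct consequence.''
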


We prove Turaev's conjectures on realization and splitting. These are, respectively, Theorem A and Theorem B below.

\medskip

Recall that the group $G$ is of type FP$_{n}$ if and only if the trivial $\mathbb{Z}[G]$-module $\mathbb{Z}$ has a projective resolution, $\mathbf{P}$, with $P_{j}$ finitely generated for $j \le n$.

\begin{maina}
  Let $G$ be a finitely presentable group, $\omega$ a cohomology class in $H^{1}\left(G; \mathbb{Z}/2\mathbb{Z}\right)$, and $\mu$ a homology class in $H_{n}(G; \mathbb{Z}^{\omega})$, with $n \ge 3$.
  
 If $G$ is of type FP$_{n-1}$, with $H^{i}(G; \,^{\omega}\! \mathbb{Z}[G]) =0$ for $1 < i \le n-1$, then $(G, \omega, \mu)$ can be realized as the fundamental triple of a $PD_{n}$-complex with $(n-2)$-connected universal cover if and only if the Turaev map applied to $\mu$ yields an isomorphism in the stable category of $\mathbb{Z}[G]$.
\end{maina}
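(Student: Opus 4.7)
The necessity direction follows from Poincar\'e duality on the universal cover. If $X$ is a $PD_n$-complex with $(n-2)$-connected universal cover $\widetilde X$ realizing $(G,\omega,\mu)$, then the cellular chain complex $C_*(\widetilde X)$ is a length-$n$ complex of finitely generated free $\mathbb Z[G]$-modules, exact in degrees $1$ through $n-2$ by the connectivity hypothesis. This exhibits $G$ as FP$_{n-1}$, and Poincar\'e duality identifies $H^i(G;{}^\omega\mathbb Z[G])$ with $H_{n-i}(\widetilde X;\mathbb Z)$, which vanishes for $1<i\le n-1$. The Turaev map applied to $[X]$ is, by construction, the cap-product chain equivalence between the cochain and chain complexes of $\widetilde X$, hence is a stable equivalence.

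For sufficiency, the plan is to build $X$ by attaching cells in order of increasing dimension. Since $G$ is finitely presentable, start with a finite $2$-complex $K_2$ with $\pi_1(K_2)=G$. Using the FP$_{n-1}$ hypothesis, extend the cellular chain complex of $\widetilde{K_2}$ to a finite free partial resolution $P_*$ of $\mathbb Z$ over $\mathbb Z[G]$ of length $n-1$, and realize each new generator geometrically by attaching a cell of the appropriate dimension $k\in\{3,\dots,n-1\}$, killing $\pi_{k-1}$ of the universal cover at each stage. The result is a finite CW complex $Y$ with $\pi_1(Y)=G$ and $(n-2)$-connected universal cover $\widetilde Y$, whose cellular chain complex below the top dimension is exactly $P_*$.

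The crucial step is the attachment of the top cells. After extending $P_*$ once more in degree $n$, the class $\mu$ is represented by a cycle in $\mathbb Z^\omega\otimes_{\mathbb Z[G]}P_n$, and cap product with this cycle induces the Turaev map from the dualized, reversed and shifted cochain complex $\Hom_{\mathbb Z[G]}(P_*,{}^\omega\mathbb Z[G])$ to $P_*$. The cohomology-vanishing hypothesis implies that the comparison is concentrated at the top and bottom degrees, and the stable-isomorphism hypothesis means that after wedging $Y$ with finitely many copies of $S^{n-1}$ (which preserves $(n-2)$-connectivity of the universal cover) the Turaev map becomes a genuine chain isomorphism. Attaching $n$-cells along maps realizing the resulting chain data, via the Hurewicz isomorphism on the $(n-2)$-connected universal cover, produces a finite complex $X$ whose universal cover has a cellular chain complex satisfying chain-level Poincar\'e duality with fundamental class $\mu$. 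By Wall's criterion, $X$ is then a $PD_n$-complex realizing $(G,\omega,\mu)$.

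The main obstacle is the last step: converting the algebraic stable-isomorphism statement into geometric cell-attaching data. One must show that the stabilizing free modules are of finite rank (using FP$_{n-1}$ together with the cohomology vanishing), realize the chain-level attaching maps by genuine spherical maps into the $(n-1)$-skeleton (via Hurewicz on the highly connected universal cover), and finally verify that the resulting universal-cover chain complex is actually Poincar\'e self-dual rather than merely stably so. All three sub-tasks are controlled by the vanishing of $H^i(G;{}^\omega\mathbb Z[G])$ in the middle range $1<i\le n-1$, which reduces the Turaev comparison to a map of two-term complexes concentrated in degrees $0,n-1,n$.
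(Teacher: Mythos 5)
Your overall strategy coincides with the paper's: necessity via Poincar\'e duality on the universal cover together with the identification of the Turaev map with the cap product by the top cell (the paper's Lemma \ref{lem:nu}, proved by an explicit diagonal-approximation computation rather than ``by construction''), and sufficiency by building the $(n-1)$-skeleton from a $K(G,1)$ of finite type, stabilizing, attaching top cells through the Hurewicz isomorphism on the $(n-2)$-connected universal cover, and then verifying duality degree by degree. The paper packages the cell attachment in Baues' homotopy-system formalism and proves $f_*[X]=\mu$ using the injectivity of the Turaev map; these are implementations of the steps you describe.

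There is, however, a genuine gap at the step you yourself flag as crucial. A homotopy equivalence of modules $h\colon F^{n-1}\bigl(\mathbf{C}(\widetilde{K})\bigr)\to I$ factors as
\[
F^{n-1}\bigl(\mathbf{C}(\widetilde{K})\bigr)\rightarrowtail F^{n-1}\bigl(\mathbf{C}(\widetilde{K})\bigr)\oplus\Lambda^m\xrightarrow{\ \cong\ } I\oplus P\twoheadrightarrow I
\]
with $P$ finitely generated \emph{projective} but not free in general. Wedging on finitely many copies of $S^{n-1}$ only realizes the stabilization $\Lambda^m$ of the source; it does not turn the Turaev representative into a ``genuine chain isomorphism'' onto $I$, because the target must also be stabilized by $P$. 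The top chain group one is forced to use is $(\Lambda\oplus P)^{\ast}$, and this is the cellular chain group of a finite set of $n$-cells only when $P$ is free; the hypothesis $H^{i}(G;{}^{\omega}\Lambda)=0$ does not force this. The paper therefore treats two cases: when $P$ is free it proceeds essentially as you propose (wedging on $n$-balls $B=(e^{0}\cup e^{n-1})\cup e^{n}$, so that an $n$-cell accompanies each new $(n-1)$-cell); when $P$ is merely projective it uses an Eilenberg swindle $(\Lambda\oplus P)^{\ast}\oplus\Lambda^{\infty}\cong\Lambda^{\infty}$, attaches infinitely many $n$-balls, and then invokes Wall's finiteness theorem to pass back to a complex with the required finitely dominated chain complex. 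Without some such device your construction stalls exactly where you locate ``the main obstacle''.
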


\begin{mainb}
A $PD_{n}$-complex with $(n-2)$-connected universal cover decomposes as a non-trivial connected sum of two such $PD_{n}$-complexes if only if its fundamental group decomposes as a non-trivial free product of groups.
\end{mainb}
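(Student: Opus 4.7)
The ``only if'' direction is straightforward: if $X \simeq X_1 \# X_2$ non-trivially, van Kampen's theorem applied along the separating $(n-1)$-sphere (simply connected since $n \geq 3$) gives $\pi_1(X) \cong \pi_1(X_1) * \pi_1(X_2)$. For non-triviality we need $X_i \not\simeq S^n$ for both $i$; but a $PD_n$-complex with trivial fundamental group and $(n-2)$-connected universal cover is itself $(n-2)$-connected, hence (by Poincar\'e duality, Hurewicz, and Whitehead) homotopy equivalent to $S^n$. So both $\pi_1(X_i)$ are non-trivial, yielding a non-trivial free product decomposition of $\pi_1(X)$.

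For the converse, assume $X$ has fundamental triple $(G, \omega, \mu)$ with $G = G_1 * G_2$ a non-trivial free product. My plan is to construct summands $X_i$ via Theorem A and then identify $X$ with $X_1 \# X_2$ via the classification theorem from the introduction. Set $\omega_i := \omega|_{G_i} \in H^1(G_i; \mathbb{Z}/2\mathbb{Z})$. Since $B(G_1 * G_2) \simeq BG_1 \vee BG_2$, for $n \geq 2$ we have
\[
H_n(G; \mathbb{Z}^\omega) \cong H_n(G_1; \mathbb{Z}^{\omega_1}) \oplus H_n(G_2; \mathbb{Z}^{\omega_2}),
\]
and I decompose $\mu = \mu_1 + \mu_2$ accordingly. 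I next verify that each triple $(G_i, \omega_i, \mu_i)$ satisfies the hypotheses of Theorem A: finite presentability of $G_i$ follows from Grushko's theorem together with standard results on splittings of finitely presented groups; the type FP$_{n-1}$ condition and the vanishing $H^j(G_i; {}^{\omega_i}\mathbb{Z}[G_i]) = 0$ for $1 < j \leq n-1$ follow from the Mayer--Vietoris sequence associated with the action of $G$ on the Bass--Serre tree of the splitting. Most critically, I must verify that the Turaev map applied to $\mu_i$ is an isomorphism in the stable category of $\mathbb{Z}[G_i]$-modules; this I expect to deduce from the analogous hypothesis on $\mu$ combined with naturality of the Turaev map under restriction along $G_i \hookrightarrow G$.

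Theorem A then produces $PD_n$-complexes $X_1, X_2$ with $(n-2)$-connected universal cover realizing $(G_i, \omega_i, \mu_i)$. Their connected sum $X_1 \# X_2$ is a $PD_n$-complex whose universal cover, obtained by equivariantly gluing copies of $\widetilde{X}_i \setminus D^n$ along $(n-1)$-spheres, is $(n-2)$-connected because $S^{n-1}$ is $(n-2)$-connected and each $\widetilde{X}_i$ is. Its fundamental triple is $(G, \omega, \mu)$ by construction, so the classification theorem yields an oriented homotopy equivalence $X \simeq X_1 \# X_2$. The main obstacle is the compatibility of the Turaev map with the free-product decomposition: this is the technical heart of the proof and should reduce to assembling compatible projective resolutions of $\mathbb{Z}$ over $\mathbb{Z}[G]$, $\mathbb{Z}[G_1]$, and $\mathbb{Z}[G_2]$ from the Bass--Serre tree and tracking the Turaev map through the associated Mayer--Vietoris sequence.
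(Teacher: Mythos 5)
Your overall architecture coincides with the paper's: decompose $\mu=\mu_1+\mu_2$ using $K(G_1*G_2,1)\simeq K(G_1,1)\vee K(G_2,1)$, realize each $(G_i,\omega_i,\mu_i)$ by Theorem A, form the connected sum, and conclude by the classification theorem; the necessity direction, including the observation that a simply connected summand would be $\simeq S^n$, is fine. The genuine gap is the step you yourself call the technical heart: you never prove that $\nu_{\mathbf{C}(\widetilde{K}_i),n}(\mu_i)$ is a stable isomorphism, and the mechanism you gesture at --- ``naturality of the Turaev map under restriction along $G_i\hookrightarrow G$'' --- is not the one that works. The relation between the Turaev maps of $G$ and of $G_i$ goes through \emph{induction}: writing $L_i=\Lambda\otimes_{\mathbb{Z}[G_i]}\underline{\phantom{A}}$, one has $C_q(\widetilde{K})=L_1\big(C_q(\widetilde{K}_1)\big)\oplus L_2\big(C_q(\widetilde{K}_2)\big)$ for $q\ge1$, hence $F^{n-1}\big(\mathbf{C}(\widetilde{K})\big)=L_1\big(F^{n-1}(\mathbf{C}(\widetilde{K}_1))\big)\oplus L_2\big(F^{n-1}(\mathbf{C}(\widetilde{K}_2))\big)$ and $I(G)=L_1\big(I(G_1)\big)\oplus L_2\big(I(G_2)\big)$, and the compatibility statement you need is that $\nu_{\mathbf{C}(\widetilde{K}),n}(\mu)$ is represented by the block-diagonal morphism $L_1(\varphi_1)\oplus L_2(\varphi_2)$, where $\varphi_i$ represents $\nu_{\mathbf{C}(\widetilde{K}_i),n}(\mu_i)$. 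That identity is a concrete computation with the connecting homomorphism defining $\nu$ and must be carried out; it is what the paper does.

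Even granting that identity, the decisive point remains: from $L_1(\varphi_1)\oplus L_2(\varphi_2)$ being a homotopy equivalence of $\Lambda$-modules one must deduce that each $\varphi_i$ is a homotopy equivalence of $\mathbb{Z}[G_i]$-modules. The block-diagonal form gives, formally, that each $L_i(\varphi_i)$ is a stable isomorphism over $\Lambda$; but one still needs that induction along $G_i\hookrightarrow G$ \emph{reflects} stable isomorphisms --- a priori a $\mathbb{Z}[G_i]$-morphism could become invertible in the stable category only after inducing up to $\Lambda$. This is not formal; it uses the decomposition of $\Lambda$ as a $\mathbb{Z}[G_i]$-bimodule into $\mathbb{Z}[G_i]$ plus a free complement, and it is exactly the point at which the paper defers to Turaev's argument (pp.~269--270 of \cite{Tu}). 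Your closing sentence correctly locates this difficulty but leaves it unresolved, so as written the proposal is a correct plan with its key lemma missing rather than a proof.
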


Thus, it is enough to investigate $PD_{n}$-complexes  with $(n-2)$-connected universal cover whose fundamental group is indecomposable as free product, 
and we turn to the analysis of such complexes.
Our arguments here exploit the interaction of Poincar\'e duality with the \emph{Chiswell sequence associated with a graph of groups}  (cf.~\cite{Cr} and \cite{Hi12}).

The parity of the dimension $n$ is significant.

When $n$ is odd, indecomposable orientable $PD_n$-complexes are either aspherical or have virtually free fundamental groups,  and the arguments of \cite{Hi12} give similar constraints on the latter class of groups. (See Section \ref{section:crisp}.)
However, implementing the Realization Theorem may be difficult,
and we do not consider this case further.

When $n$ is even there are indecomposable fundamental groups, $\pi$, 
with virtual cohomological dimension $n$ --- $\vcd\pi=n$ --- and infinitely many ends.
Our strongest results are for groups which are indecomposable and virtually free.

\begin{mainc}
Let $X$ be a $PD_{2k}$-complex with $(2k-2)$-connected universal cover. If $\pi=\pi_1(X)$ is virtually free, indecomposable as a free product and has no dihedral subgroup of order $>2$ then either $X\simeq\mathbb{R}P^{2k}$ or
$\pi$ has two ends and its maximal finite subgroups have cohomological period dividing $2k$. Hence $\widetilde{X}\simeq{S^{2k-1}}$. If, moreover, $X$ is orientable then  $\pi/\pi'\cong\mathbb{Z}$.
\end{mainc}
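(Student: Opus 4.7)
By the Karrass--Pietrowski--Solitar theorem, the finitely generated virtually free group $\pi$ is the fundamental group of a finite reduced graph of finite groups. Since $\pi$ is indecomposable as a free product, every edge group in such a splitting must be non-trivial (otherwise collapsing an edge with trivial edge group exhibits $\pi$ as a free product). Accordingly there are three cases to analyse separately: (i) $\pi$ is finite (a single vertex); (ii) $\pi$ has two ends, i.e.\ $\pi$ is virtually $\mathbb{Z}$; or (iii) $\pi$ has infinitely many ends and splits as $A*_C B$ or as an HNN extension with $A,B$ finite and $C$ a proper, non-trivial finite subgroup.

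\textbf{Finite case.} Assume $\pi$ is finite. Then $\widetilde{X}$ is a compact simply connected $(2k-2)$-connected $PD_{2k}$-complex. Poincar\'e duality together with the Universal Coefficient Theorem and Hurewicz forces $H_{2k-1}(\widetilde X)=0$ and $H_{2k}(\widetilde X)\cong\mathbb{Z}$, whence $\widetilde X\simeq S^{2k}$. Any free action of a finite group on $S^{2k}$ has order dividing $\chi(S^{2k})=2$ (by the Lefschetz fixed point theorem), so $\pi$ is trivial or isomorphic to $\mathbb{Z}/2$; the non-trivial case produces $X\simeq\mathbb{R}P^{2k}$.

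\textbf{Ruling out case (iii).} This is the main technical obstacle. Two ingredients need to be combined. First, each finite subgroup $F\le\pi$ acts freely on $\widetilde X$, and using Poincar\'e duality on $X$ at the level of the $\mathbb{Z}[F]$-chain complex $C_*(\widetilde X)$ forces $F$ to have periodic cohomology of period dividing $2k$; this gives strong local restrictions on $C$, on the vertex groups $A,B$, and on the normaliser quotients $N_A(C)/C$ and $N_B(C)/C$. Second, I would apply the Chiswell Mayer--Vietoris sequence of the splitting,
\[
\cdots\to H_i(C;M)\to H_i(A;M)\oplus H_i(B;M)\to H_i(\pi;M)\to H_{i-1}(C;M)\to\cdots,
\]
with coefficient modules $M$ drawn from the Poincar\'e structure of $X$ (such as $\mathbb{Z}^\omega$, $\mathbb{Z}^\omega[\pi]$ and $\Pi=\pi_{2k-1}(\widetilde X)$), and compare with Poincar\'e duality on $X$ itself. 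Following the strategy of \cite{Cr} and \cite{Hi12}, these two ingredients combine to force the normaliser structure around $C$ to contain a dihedral subgroup of order greater than $2$, contradicting the hypothesis. Hence case (iii) does not occur.

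\textbf{Two-ended case.} A two-ended group $\pi$ is an extension of either $\mathbb{Z}$ or the infinite dihedral group $D_\infty$ by a finite group $F$. Since $D_\infty$ is dihedral of order $>2$, the second alternative is excluded by hypothesis, so $\pi$ fits into a short exact sequence $1\to F\to\pi\to\mathbb{Z}\to 1$ with $F$ representing the unique conjugacy class of maximal finite subgroup. Pass to the finite-sheeted cover $X_0$ of $X$ corresponding to a subgroup of $\pi$ of finite index isomorphic to $\mathbb{Z}$; then $X_0$ is a $PD_{2k}$-complex with $\pi_1(X_0)\cong\mathbb{Z}$ sharing the same universal cover $\widetilde X$. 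Since $H^i(\mathbb{Z};\mathbb{Z}[\mathbb{Z}])$ is $\mathbb{Z}$ for $i=1$ and vanishes otherwise, Poincar\'e duality on $X_0$ forces $H_{2k-1}(\widetilde X)\cong\mathbb{Z}$ and $H_{2k}(\widetilde X)=0$; combined with the $(2k-2)$-connectivity and the fact that $\widetilde X$ has the homotopy type of a CW complex of dimension $\le 2k$, this yields $\widetilde X\simeq S^{2k-1}$. Every finite subgroup of $\pi$ therefore acts freely on $S^{2k-1}$, so by Swan's theorem each maximal finite subgroup of $\pi$ has periodic cohomology of period dividing $2k$. Finally, if $X$ is orientable then $\omega$ is trivial and the action of $\pi$ on $H_{2k-1}(\widetilde X)\cong\mathbb{Z}$ must be by $+1$; combining this with the Serre spectral sequence of the homotopy fibration $\widetilde X\to X\to B\pi$ and the $PD_{2k}$ hypothesis identifies the abelianisation as $\pi/\pi'\cong\mathbb{Z}$.
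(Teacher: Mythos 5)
Your case division (finite, two-ended, infinitely many ends) matches the paper's, and your treatment of the finite case and of $\widetilde{X}\simeq S^{2k-1}$ in the two-ended case is essentially correct. However, the elimination of the infinitely-many-ends case --- which is the entire technical content of the theorem --- is not actually carried out. You assert that periodicity of finite subgroups together with the Chiswell sequence will ``force the normaliser structure around $C$ to contain a dihedral subgroup'', but this is precisely what must be proved, and your reduction to a single splitting $A*_CB$ or HNN extension \emph{with $A,B$ finite} already presupposes that the terminal graph of finite groups has only one edge, which is a large part of the conclusion. The paper instead works with the full reduced indecomposable graph $(\mathcal{G},\Gamma)$ and needs a chain of intermediate results: the centralizer condition (Theorem \ref{Cinf}, derived from Remark 13 and Theorem 17 of \cite{Cr} using $H_1(C;{}^\omega H^1(\pi;\mathbb{Z}[\pi]))\cong C$ for $n$ even), which says an infinite centralizer of a torsion element is two-ended; Lemma \ref{noisolates}, which uses Theorem B and the evenness of $n$ to show each vertex group meets an adjacent edge group nontrivially; Theorem \ref{oddorder} on metacyclicity of odd-order subgroups; and Lemmas \ref{isoneloop}--\ref{notlooporMC}, where the contradictions come from exhibiting nonabelian free subgroups inside centralizers of finite subgroups (contradicting Theorem \ref{Cinf}), not from producing dihedral subgroups directly --- the dihedral hypothesis enters only to rule out the residual configurations where an edge group of order $2$ is self-centralizing in a vertex group. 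None of this analysis is present in your sketch.

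There is a second, more localized error in the two-ended case: you discard the alternative $\pi/F\cong D_\infty$ on the grounds that ``$D_\infty$ is dihedral of order $>2$''. The hypothesis of Theorem C excludes \emph{finite} dihedral subgroups, and a two-ended group surjecting onto $D_\infty$ with finite kernel need not contain any such subgroup: $\mathbb{Z}/4\mathbb{Z}*_{\mathbb{Z}/2\mathbb{Z}}\mathbb{Z}/4\mathbb{Z}$ has a unique involution, which is central. Indeed the paper's example $D(MC)$, the double of the mapping cylinder of the double cover of a lens space $L(2m,q)$, satisfies all the hypotheses of Theorem C and has $\pi$ an extension of $D_\infty$ by $\mathbb{Z}/m\mathbb{Z}$; so the $D_\infty$ case genuinely occurs and can only be excluded when $X$ is orientable. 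The paper does this in Theorem \ref{mtorus} by an Euler characteristic argument: if the index-two subgroup $\sigma$ with $\sigma/F\cong\mathbb{Z}$ were proper, then $\pi/\pi'$ would be finite, whence $H_q(X;\mathbb{Q})=0$ for $0<q<2k$ and $\chi(X)=2$, contradicting $\chi(X)=0$, which holds because $X$ is finitely covered by a mapping torus. Your closing appeal to the Serre spectral sequence does not supply this step, so the orientable refinement $\pi/\pi'\cong\mathbb{Z}$ is also unproved as written.
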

 
In particular, Theorem C applies to closed 4-manifolds $M$ with $\pi_2(M)=0$ and such fundamental groups.  There is no geometric connected sum decomposition theorem for 4-manifolds currently known that corresponds to Theorem B.

There is also a realization result,  when $\pi\cong{F}\rtimes_\theta\mathbb{Z}$ with $F$ finite (i.e., when $\pi/\pi'\cong\mathbb{Z}$).

\begin{maind}
If  $\pi\cong{F}\rtimes_\theta\mathbb{Z}$ where $F$ is finite then $\pi=\pi_1(X)$ for some $PD_{2k-1}$-complex $X$ with $\widetilde{X}\simeq{S^{2k-1}}$ if and only if  $F$ has cohomological period dividing $2k$ and $H_{n-1}(\theta;\mathbb{Z})=\pm1$.
\end{maind}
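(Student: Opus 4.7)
Suppose $X$ satisfies the conclusion. The intermediate cover $Y := \widetilde{X}/F$ is a finite $PD_{2k-1}$-complex with $\pi_1(Y) \cong F$ and $\widetilde{Y} = \widetilde{X} \simeq S^{2k-1}$, i.e., a Swan complex for $F$. By Swan's theorem this forces $F$ to have cohomological period dividing $2k$. A lift of a generator $t$ of $\pi/F \cong \mathbb{Z}$ to the universal cover descends to a self-homotopy equivalence $\phi\colon Y \to Y$ with $\pi_1(\phi) = \theta$. Comparing $\phi$ with the classifying map $Y \to K(F,1)$ identifies (up to sign) the action of $\phi_*$ on the top-dimensional class of $Y$ with $H_{n-1}(\theta;\mathbb{Z})$ acting on $H_{n-1}(F;\mathbb{Z})$; since $\phi$ is a homotopy equivalence, this scalar must be $\pm 1$.

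\textbf{Sufficiency.} Conversely, suppose $F$ has period dividing $2k$ and $H_{n-1}(\theta;\mathbb{Z}) = \pm 1$. The periodicity of $F$ provides a finite $2k$-periodic free $\mathbb{Z}[F]$-resolution of $\mathbb{Z}$; the hypothesis on $\theta$ ensures that this resolution is (up to sign) $\theta$-equivariant, so that after twisting by $t$ one obtains a finite projective $\mathbb{Z}[\pi]$-resolution of $\mathbb{Z}$ of length $2k-1$, viewing $\mathbb{Z}[\pi] = \mathbb{Z}[F][t,t^{-1};\theta]$ as a twisted Laurent extension. This resolution is realized geometrically as a finite $(2k-1)$-dimensional CW complex $X$ with $\pi_1(X) \cong \pi$ whose universal cover is $\simeq S^{2k-1}$. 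Poincar\'e duality for $X$ then follows from the duality available to the algebraic resolution, together with a cap-product argument using the orientation character inherited from the $\pi$-action on $H_{2k-1}(S^{2k-1};\mathbb{Z})$.

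\textbf{Main obstacle.} The most delicate part is the sufficiency direction: constructing the length-$(2k-1)$ projective $\mathbb{Z}[\pi]$-resolution and realising it as a finite CW complex rather than just a chain complex. The hypothesis $H_{n-1}(\theta;\mathbb{Z}) = \pm 1$ is exactly the algebraic compatibility that allows the $F$-periodicity to close up after $2k-1$ steps (instead of the $2k$ produced by the obvious mapping-torus construction) once twisted by the action of $t$, and it likewise forces the associated Wall finiteness obstruction to vanish. Verifying Poincar\'e duality through the construction then reduces to standard homological algebra, with care needed to keep track of the orientation character; the Chiswell-style sequences for the extension $1 \to F \to \pi \to \mathbb{Z} \to 1$ employed elsewhere in the paper provide the cleanest bookkeeping.
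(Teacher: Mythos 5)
Your necessity argument is sound and is essentially the paper's: pass to the intermediate cover $Y=\widetilde{X}/F$, which is a Swan complex for $F$ (so the period of $F$ divides $2k$), and read off the condition on $\theta$ from the generator of $\pi/F\cong\mathbb{Z}$. The paper extracts this from the Wang sequence of $X\to S^1$, while you compare the deck transformation $\phi$ of $Y$ with the classifying map $c\colon Y\to K(F,1)$; these are equivalent. (You should note that $c_*[Y]$ generates $H_{2k-1}(F;\mathbb{Z})\cong\mathbb{Z}/|F|\mathbb{Z}$ — true because $K(F,1)$ is obtained from $Y$ by attaching cells of dimension $\geq 2k$ — so that $\phi_*[Y]=\pm[Y]$ really does force $H_{2k-1}(\theta;\mathbb{Z})=\pm1$ on all of $H_{2k-1}(F;\mathbb{Z})$.)

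The sufficiency direction, however, contains a genuine error. First, the "$PD_{2k-1}$" in the statement must be read as "$PD_{2k}$" (this is how the paper restates it as Theorem \ref{wang}): no $(2k-1)$-dimensional complex with infinite fundamental group can have universal cover $\simeq S^{2k-1}$, since $H_{2k-1}(\widetilde{X};\mathbb{Z})$ would then be a nonzero submodule of the free module $C_{2k-1}(\widetilde{X})$ on which $\pi$ acts through $\pm1$, and no nonzero element of a free $\mathbb{Z}[\pi]$-module has finite orbit under an infinite $\pi$. For the same reason there is no "finite projective $\mathbb{Z}[\pi]$-resolution of $\mathbb{Z}$ of length $2k-1$": when $F\neq1$ the group $\pi$ has infinite cohomological dimension, so $\mathbb{Z}$ admits no finite-length projective $\mathbb{Z}[\pi]$-resolution at all. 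What is actually needed is a length-$2k$ complex with homology $\mathbb{Z}$ in degrees $0$ and $2k-1$, i.e.\ the chain complex of the universal cover of a $2k$-dimensional complex. Second, your claim that the hypothesis on $\theta$ makes the periodic $\mathbb{Z}[F]$-resolution "$\theta$-equivariant up to sign" is precisely the hard step, and you give no argument for it: the content of the hypothesis $H_{2k-1}(\theta;\mathbb{Z})=\pm1$ is Plotnick's theorem that $\theta$ is then induced by a self homotopy equivalence $f$ of a Swan complex $X_F$ (a $PD_{2k-1}$-complex with $\pi_1\cong F$ and universal cover $\simeq S^{2k-1}$, supplied by Swan and Wall). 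The paper's proof then simply takes the mapping torus of $f$ — the construction you explicitly set aside as "the obvious mapping-torus construction" — which is a $PD_{2k}$-complex with fundamental group $F\rtimes_\theta\mathbb{Z}$ and universal cover $\simeq S^{2k-1}$, with no separate verification of duality or of a finiteness obstruction required. In short: the mapping torus is not a detour to be improved upon; it is the proof.
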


In particular, we have not determined the possibilities when $\pi$ has dihedral subgroups, and we do not know whether there are examples with $D_4=(\mathbb{Z}/2\mathbb{Z})^2$ as subgroup. As seems common in topology,  there appear to be difficulties associated with 2-torsion!

Section \ref{section:notation} summarizes background material and fixes notation.

Section \ref{section:necessity} contains the formulation and proof of the necessity of the condition for the realization of a  fundamental triple by a $PD_{n}$-complex with $(n-2)$-connected universal cover.

Section \ref{section:sufficiency} completes the proof of Theorem A, with the sufficiency of the condition in Section \ref{section:necessity}.

Section \ref{section:connsum} contains the proof of Theorem B, showing how the fundamental triple detects connected sums.

Section \ref{section:crisp} starte the discussion of indecomposable $PD_{n}$-complexes with $(n-2)$-connected universal covers, beginning  with the Chiswell sequence associated with a graph of groups and Crisp's centralizer condition. 

In Section \ref{section:otherconsequences} we give some supporting results,  and construct examples of indecomposable groups $\pi$ with infinitely many ends and $\vcd\pi=n$ which are the fundamental groups of closed $n$-manifolds with $(n-2)$-connected universal cover.

In Section \ref{section:virtuallyfree} we show that finite subgroups of $\pi$ of odd order are metacyclic.

In Sections \ref{section:nodihedralsub} and \ref{section:construct} we prove Theorems C and D.

Section \ref{section:periodic} concludes by briefly considering possible examples with $D_4$ as a subgroup.
{
}

\section{Background and Notation}\label{section:notation}

This section summarizes background material and fixes notation for the rest of the paper. Details and further references can be found in \cite{BB2010}, \cite{Cr}, \cite{Hi12}, \cite{DD} and \cite{Serre}.

 Let $\Lambda$ be the integral group ring, $\mathbb{Z}[G]$, of the finitely presentable group, $G$. We write $I$ for the \emph{augmentation ideal}, the kernel of the augmentation map 
 \[
 \aug \colon \Lambda \longrightarrow \mathbb{Z}, \quad \sum_{g \in G} n_{g} g \longmapsto \sum_{g \in G} n_{g}
 \]
  where $\mathbb{Z}$ is a $\Lambda$-bi-module with trivial $\Lambda$ action. Each cohomology class $\omega \in H^{1}\left(G; \mathbb{Z}/2\mathbb{Z}\right)$ may be viewed as a group homomorphism $\omega \colon G \longrightarrow  \mathbb{Z}/2\mathbb{Z} =\{ 0, 1\}$ and yields an anti-isomorphism
\begin{equation*}
   \overline{\phantom{B}} \colon \Lambda \longrightarrow \Lambda, \quad \lambda= \sum_{g \in G} n_{g} g \longmapsto \overline{\lambda} = \sum_{g \in G} (-1)^{\omega(g)}  n_{g} g^{-1}  
   \end{equation*}

Consequently, a right $\Lambda$-module, $A$, yields the \emph{conjugate left $\Lambda$-module}, $^{\omega}\!A$, with action given by 
\[
   \lambda  \mathbf{\centerdot} a := a .\overline{\lambda}
\]
  for $\lambda \in \Lambda, a \in A$. Similarly, a left $\Lambda$-module $B$ yields the conjugate right $\Lambda$-module, $B^{\omega}$.

Given left $\Lambda$-modules $A_{j}, B_{i}$ for $ 1 \le i \le k$ and $1 \le j \le \ell$, we sometimes write the $\Lambda$-morphism $\psi \colon \bigoplus_{j=1}^{\ell} A_{j} \longrightarrow \bigoplus_{i=1}^{k}B_{i}$
in matrix form as
\(
    \big[\psi_{ij}\big]_{k \times \ell}
\) 
for $\psi_{ij} = pr_{i} \circ \psi \circ in_{j} \colon A_{j} \longrightarrow B_{i}$, where $in_{j}$ is the $j^{\text{th}}$ natural inclusion and $pr_{i}$ the $i^{\text{th}}$ natural projection of the direct sum. The composition of such morphisms is given by matrix multiplication.

If $B$ is a left $\Lambda$-module and $M$  a $\Lambda$-bi-module, then $\Hom_{\Lambda}(B,M)$ is a right $\Lambda$-module with  action given by 
\[
\varphi.\lambda \colon B \longrightarrow M, \quad b \longmapsto \varphi(b).\lambda
\]
The dual of the left $\Lambda$-module $B$ is the left $\Lambda$-module $B^{\ast} = \,^{\omega}\!\Hom_{\Lambda}(B, \Lambda)$. The construction of the dual defines an endofunctor on the category of left $\Lambda$-modules. 

Evaluation  defines a natural transformation, $\varepsilon$, from the identity functor to the double dual functor, where for the left $\Lambda$-module $B$, 
\[
  \varepsilon_{B} \colon B \longrightarrow  B^{\ast\ast} = \, ^{\omega}\!\Hom_{\Lambda}\big(\,^{\omega}\!\Hom_{\Lambda}(B, \Lambda), \Lambda\big), \quad b \longmapsto \, \overline{\ev_{b}}
\]
with $\overline{\ev_{b}}$  defined by 
\[
  \overline{\ev_{b}} \colon \, ^{\omega}\!\Hom(B, \Lambda) \longrightarrow \Lambda, \quad \psi \longmapsto \overline{\psi(b)}
 \]
 The left $\Lambda$-module, $A$, defines the natural transfomation, 
\(
\eta
\),  from the functor \(A^{\omega} \otimes_{\Lambda} \underline{\phantom{B}} \) to the functor \( \Hom_{\Lambda}\big(\,^{\omega}\!\Hom_{\Lambda}(\,\underline{\phantom{B}} \, , \Lambda), A\big) \) where, for the left $\Lambda$-module $B$, 
\[
   \eta_{B} \colon A^{\omega} \otimes_{\Lambda} B \longrightarrow \Hom_{\Lambda}(B^{\ast}\! , A) = \Hom_{\Lambda}\big(\,^{\omega}\!\Hom_{\Lambda}(B, \Lambda), A\big)
\] 
is given by 
\[
\eta_{B}(a \otimes b ) \colon \psi \longmapsto \overline{\psi(b)}.a
\]
 for $a \otimes b \in A^{\omega}\otimes_{\Lambda} B$.  Both $\varepsilon$ and $\eta$ become natural equivalences when restricted to the category of finitely generated free $\Lambda$-modules.

The $\Lambda$-morphisms $f,g \colon A_{1} \longrightarrow A_{2}$ are \emph{homotopic} if and only if the $\Lambda$-morphism $f-g \colon A_{1} \longrightarrow A_{2}$ factors through a projective $\Lambda$-module $P$. Associated with $\Lambda$ is its \emph{stable category}, whose objects are all $\Lambda$-modules  and whose morphisms are all homotopy classes of $\Lambda$-morphisms. Thus,  an isomorphism in the stable category of $\Lambda$ is a homotopy equivalence of $\Lambda$-modules.

We work in the category of connected, well pointed $CW$-complexes and pointed maps.  We write $X^{[k]}$ for the $k$-skeleton of $X$, suppressing the base point from our notation. The inclusion of the $k$-skeleton into the $(k+1)$-skeleton induces homomorphisms $ \iota_{k, r} \colon \pi_{r}(X^{[k]}) \longrightarrow \pi_{r}(X^{[k+1]})$ and we write $\Gamma_{k+1}(X)$ for $\im(\iota_{k,k+1})$.

From now, we work with the fundamental group of $X$, $\pi = \pi_{1}(X)$, and its integral group ring $ \Lambda = \mathbb{Z}[\pi]$. We take $X$ to be a reduced $CW$-complex, so that $X^{[0]} = \{\ast\}$, and write $u \colon \widetilde{X}\longrightarrow X$ for the universal cover of $X$, fixing a base point for  $\widetilde{X}$ in $u^{-1}(\ast)$. We write  $\mathbf{C}(\widetilde{X})$ for  the cellular chain complex of $\widetilde{X}$ viewed as a complex of left $\Lambda$-modules. Since $X$ is reduced, $C_{0}(\widetilde{X}) = \Lambda$.

The homology and cohomology of $X$ we work with are the abelian groups
\begin{align*}
   {H}_{q}(X;A) &:= H_{q}(A \otimes_{\Lambda} \mathbf{C}(\widetilde{X}))\\
   {H}^{q}(X;B) &:= H^{q}\big(\Hom_{\Lambda}(\mathbf{C}(\widetilde{X}), B))
\end{align*}
where $A$ is a right $\Lambda$-module and $B$ is a left $\Lambda$-module. We write $H_{-q}(X;B)$ for $H^{q}(X;B)$, when treating cohomology as ``homology in negative degree'' simplifies the exposition.

An \emph{$n$-dimensional Poincar{\'e} duality complex} ($PD_{n}$-complex) comprises a reduced connected $CW$-complex, $X$,  whose  fundamental group, $\pi_{1}(X)$,  is finitely presentable, together with an \emph{orientation character}, 
$\omega=\omega_X \in H^{1}\left(\pi_{1} (X) ; \mathbb{Z}/2\mathbb{Z}\right)$, viewed as a group homomorphism 
\( 
 \pi_{1} (X) \longrightarrow  \mathbb{Z}/2\mathbb{Z}
 \), 
and a \emph{fundamental class }$[X] \in H_n(X; \mathbb Z^{\omega})$, such that  for every $r \in \mathbb{Z}$ and left $\mathbb{Z} [\pi_{1} (X)]$-module $M$, the \emph{cap product with $[X]$},
\[ 
\underline{\phantom{A}}\smallfrown [X] \colon  H^{r}(X; M) \longrightarrow H_{n-r}(X; M^{\omega}),\quad \alpha \longmapsto \alpha \smallfrown  [X]
\]
is an isomorphism of abelian groups. We denote this by $(X, \omega, [X])$.

Wall  (\cite{W1965}, \cite{W1967}) showed   that for $n >3$, every $PD_{n}$ complex is \emph{standard}, meaning that it is homotopically equivalent to an $n$-dimensional $CW$-complex with precisely one $n$-cell, whereas a $PD_{3}$ complex, $X$, is either standard, or \emph{weakly standard}, the latter meaning that it is homotopically equivalent to one of the form $X^{\prime} \cup e^{3}$, where $e^{3}$ is a 3-cell and $X^{\prime}$ is a 3-dimensional $CW$-complex with $H^{3}(X^{\prime};B) = 0$ for all coefficient modules $B$.

Baues introduced \emph{homotopy systems} to investigate when chain complexes and chain maps of free $\Lambda$-modules are realized by $CW$-complexes (\cite{HJB-1991}).

  Take an integer $n >1$. A \emph{homotopy system of order $(n+1)$} comprises 
  \begin{itemize}
  
  \item[(a)] a reduced $n$-dimensional $CW$-complex, $X$,
  
  \item[(b)]  a chain complex $\mathbf{C}$ of free $\Lambda$-modules coinciding with $\mathbf{C}(\widetilde{X}) $ in degree $q$ for $q \le n$,

  \item[(c)] a homomorphism, \( f_{n+1} \colon C_{n+1} \longrightarrow \pi_{n}(X) \) with  \( f_{n+1} \circ d_{n+2} = 0 \) rendering commutative the diagram
\[
     \begin{tikzpicture}[scale=0.8]
               \path node (Cn+1) at (-1.5,1) {$C_{n+1}$} 
                 node (pin) at (1.5,1)  {$\pi_{n}(X)$}
                 node (Cn) at (-1.5,-1) {$C_{n}$}
                 node (pirel) at (1.5,-1) {$\pi_{n}\big(X,X^{[n-1]}\big)$};
        \draw[->] (Cn+1)--(pin) node[midway,above] {$f_{n+1}$};
        \draw[->] (Cn+1) -- (Cn) node[midway,left] {$d_{n+1}$};
        \draw[->] (pin) -- (pirel) node[midway,right] {$j$};
        \draw[->] (pirel)--(Cn) node[midway,below] {$h_{n}$};
     \end{tikzpicture}
  \]
  where $j$ is induced by the inclusion $(X, \ast) \longrightarrow (X, X^{[n-1]})$, and
  \[
     h_{n} \colon \pi_{n}\big(X, X^{[n-1]}\big) \xrightarrow[\cong]{ \  \ u_{\ast}^{-1} \ \ } \pi_{n}\big(\widetilde{X}, \widetilde{X}^{[n-1]}\big) \xrightarrow[\cong]{ \  \ h \ \ } H_{n}\big(\widetilde{X}, \widetilde{X}^{[n-1]}\big) 
  \]
  is  the Hurewicz isomorphism, $h$, composed with $u^{-1}_{\ast}$, the inverse of the isomorphism induced by the universal covering map.

  \end{itemize}

\section{Formulation and Necessity of the Realization Conditions}\label{section:necessity}

For our generalization of Tuarev's realization condition to $PD_{n}$-complexes with $n \ge 3$, we introduce a set of functors, $\{G_{q} \mid q \in \mathbb{Z}\}$, from the category of chain complexes of projective left $\Lambda$-modules and homotopy classes of chain maps to the stable category of $\Lambda$. 

Given $\mathbf{f} \colon \mathbf{C} \longrightarrow \mathbf{D}$, a map of chain complexes of projective left $\Lambda$-modules, $\mathbf{C}$ and $\mathbf{D}$, put
\(
   G_{q}(\mathbf{C}) : = \coker \left(d_{q+1}^{\mathbf{C}}\colon C_{q+1} \to C_{q}\right) = C_{q}/\im \left(d_{q+1}^{\mathbf{C}}\right) 
\)  
and let $G_{q}(\mathbf{f})$ be the induced map of cokernels
\begin{equation*}
  \begin{tikzpicture}[scale=0.8]
     \draw 
         node (imc) at (-3,1) {$\im\big(d^{\mathbf{C}}_{q+1}\big)$}
         node (c) at (0,1) {$C_{q}$}
         node (gc) at (3,1) {$G_{q}(\mathbf{C})$}
         node (imd) at (-3,-1) {$\im\big(d^{\mathbf{D}}_{q+1}\big)$}
         node (d) at (0,-1) {$D_{q}$}
         node (gd) at (3,-1) {$G_{q}(\mathbf{D})$}
     ;
     \draw[>->] (imc)-- (c);
     \draw[->>] (c)--(gc);
     \draw[>->] (imd)-- (d);
     \draw[->>] (d)--(gd);
     \draw[->] (imc) -- (imd) ;
     \draw[->] (c) -- (d) node [midway, left] {$f_{q}$};
     \draw[->,dotted] (gc) -- (gd) node[midway, left] {$\exists !$} node [midway,right] {$G_{q}(\mathbf{f})$};
  \end{tikzpicture}
\end{equation*}

Direct verification shows that each $G_{q}$ is a functor from the category of chain complexes of left $\Lambda$-modules to the category of left $\Lambda$-modules. By Lemma 4.2 in \cite{BB2010}, chain homotopic maps $\mathbf{f} \simeq \mathbf{g} \colon \mathbf{C} \longrightarrow \mathbf{D}$ induce homotopic maps $G_{n}(\mathbf{f}) \simeq G_{n}(\mathbf{g})$, that is, $G_{q}(\mathbf{f}) - G_{q}(\mathbf{g})$ factors through a projective $\Lambda$-module. Hence, for each $q \in \mathbb{Z}$, $G_{q}$ is a functor from the category of chain complexes of projective left $\Lambda$-modules to the stable category of $\Lambda$.

Let $X$ be a $PD_{n}$-complex with $n \ge 3$, and let $\Lambda = \mathbb{Z}[\pi_{1}(X)]$.
By Remark 2.3 and Lemma 3.6 in \cite{HJB-BB2010}, we may assume that $X = X^{\prime} \cup e^{n}$ is standard (or weakly standard if $n=3$) with
\begin{equation*}
   C_{n}(\widetilde{X}) = C_{n}(\widetilde{X^{\prime}}) \oplus \Lambda e
\end{equation*} 
where $e$  corresponds to $e^{n}$, $1 \otimes e \in \mathbb{Z}^{\omega} \otimes_{\Lambda}C_{n}(\widetilde{X})$ is a cycle representing $[X]$, the fundamental class  of $X$,  and $e$ is a generator of $C_{n}(\widetilde{X})$.

Writing $F^{q}$ for $G_{q}\big(^{\omega}\!\Hom_{\Lambda}(\,\underline{\phantom{A}}\, , \Lambda)\big),$ Poincar{\'{e}} duality, together with Lemma 4.3 in \cite{BB2010}, provides the homotopy equivalence of $\Lambda$-modules
\begin{equation*}
  G_{-n+1}\big(\,\underline{\phantom{A}} \smallfrown (1 \otimes e)\big) \colon F^{n-1}\big(\mathbf{C}(\widetilde{X}) \big) \longrightarrow G_{1}\big(\mathbf{C}(\widetilde{X})\big)
\end{equation*}

Construct the $(n-2)$-type of $X$, $P = P_{n-2}(X)$, by attaching to $X$  cells of dimension $n$ and higher. Then the Postnikov section $p \colon X \rightarrow P$ is the identity on the $(n-1)$-skeleta and, for $0 \le i < n$, $C_{i}(\widetilde{X}) = C_{i}(\widetilde{P})$. Composing with the isomorphism $ \theta \colon G_{1}\big(\mathbf{C}(\widetilde{X})\big) \longrightarrow I, \ [c] \longmapsto d_{1}(c)$, we obtain the homotopy equivalence of left $\Lambda$-modules
\begin{equation}\label{homequi}
   \theta \circ G_{-n+1}\big(\,\underline{\phantom{A}} \smallfrown (1 \otimes e)\big) \colon F^{n-1}\big(\mathbf{C}(P)\big) \longrightarrow I
\end{equation}

We next construct the \emph{Turaev map}, which sends the image of the fundamental class of $X$ in the homology of the Postnikov section to the homotopy class of the homotopy equivalence (\ref{homequi}).

Let $\mathbf{C}$ be a chain complex of free left $\Lambda$-modules. This gives rise to the short exact sequence of chain complexes $0 \to \overline{I}\mathbf{C} \to \mathbf{C} \to \mathbb{Z}^{\omega}\otimes_{\Lambda} \mathbf{C} \to 0$, with associated connecting homomorphism $\delta_{r} \colon H_{r}\big(\mathbb{Z}^{\omega}\otimes_{\Lambda} \mathbf{C}\big) \to H_{r-1}(\overline{I}\mathbf{C}) $,

It is straightforward to verify that 
\begin{equation*}
   \widehat{\nu}_{\mathbf{C},r} \colon H_{r}(\overline{I}\mathbf{C}) \longrightarrow [F^{r}(\mathbf{C}), I] ,  \quad [\lambda.c] \longmapsto \Big[F^{r}\mathbf{C} \rightarrow I, \ [\varphi] \mapsto \overline{\varphi(\lambda.c)}\Big] 
\end{equation*}
is a homomorphism of groups. Composing $\widehat{\nu}_{\mathbf{C},r-1}$  with $\delta_{r}$ yields the Turaev map \begin{equation*}
    \nu_{\mathbf{C},r} \colon H_{r} \big(\mathbb{Z}^{\omega} \otimes_{\Lambda} \mathbf{C}\big) \longrightarrow \big[F^{r-1}(\mathbf{C}), I \big]
\end{equation*}

\begin{lemma}\label{lem:nu}
$\nu_{\mathbf{C}(\widetilde{P}), n}\big( p_{\ast}([X])\big) = \big[\theta \circ G_{-n+1}\big(\,\underline{\phantom{A}} \smallfrown (1\otimes e) \big)\big]$
\end{lemma}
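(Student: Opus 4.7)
The strategy is to compute both sides of the asserted equation on explicit chain-level representatives and verify that they produce the same $\Lambda$-module homomorphism up to homotopy. Both sides live in $[F^{n-1}(\mathbf{C}(\widetilde{P})), I]$, so it suffices to identify concrete representatives.

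First I would compute the left hand side. Because $p\colon X \to P$ is the identity on $(n-1)$-skeleta, the chain map $p_{\#}$ is the identity in degrees below $n$, while $p_{\#}(e)\in C_n(\widetilde{P})$ has boundary $d_n(p_{\#}(e)) = d_n(e)\in C_{n-1}(\widetilde{X}) = C_{n-1}(\widetilde{P})$. Thus $p_{\ast}([X])$ is represented by the cycle $1\otimes p_{\#}(e) \in \mathbb{Z}^{\omega}\otimes_{\Lambda}C_n(\widetilde{P})$. To apply the connecting homomorphism $\delta_n$ of the short exact sequence
\[
0\longrightarrow \overline{I}\mathbf{C}(\widetilde{P}) \longrightarrow \mathbf{C}(\widetilde{P}) \longrightarrow \mathbb{Z}^{\omega}\otimes_{\Lambda}\mathbf{C}(\widetilde{P}) \longrightarrow 0,
\]
I would lift $1\otimes p_{\#}(e)$ to $p_{\#}(e)$, apply the differential, and observe that the result $d_n(e)$ lies in $\overline{I}\,C_{n-1}(\widetilde{P})$. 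Hence $\delta_n(p_{\ast}([X])) = [d_n(e)]$, and the definition of $\widehat{\nu}_{\mathbf{C}(\widetilde{P}), n-1}$ then expresses $\nu_{\mathbf{C}(\widetilde{P}), n}(p_{\ast}([X]))$ as the homotopy class of the map $[\varphi]\mapsto \overline{\varphi(d_n(e))}$.

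Next I would unwind the right hand side. Choose any chain-level cap product $\underline{\phantom{A}}\frown(1\otimes e)$, for example the one induced by a cellular diagonal approximation on $\mathbf{C}(\widetilde{P})$. Then $\theta \circ G_{-n+1}(\underline{\phantom{A}}\frown(1\otimes e))$ sends the class of a cochain $\varphi\colon C_{n-1}(\widetilde{P}) \to \Lambda$ to $d_1(\varphi \frown (1\otimes e)) \in I$. The key identity is the Leibniz rule for cap product with the cycle $1\otimes e$, which (because $1\otimes e$ is a cycle in $\mathbb{Z}^{\omega}\otimes_\Lambda\mathbf{C}(\widetilde{P})$) reduces to
\[
d_1(\varphi \frown (1\otimes e)) = (\varphi\circ d_n)\frown (1\otimes e).
\]
This is the cap product of an $n$-cochain with an $n$-cycle, which under the leading term of the cellular diagonal collapses to $\overline{\varphi(d_n(e))}$. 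Thus both sides represent the same $\Lambda$-homomorphism $F^{n-1}(\mathbf{C}(\widetilde{P})) \to I$, which proves the lemma.

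There is no serious conceptual obstacle; the content is really that both the connecting homomorphism $\delta_n$ applied to $1\otimes e$ and the chain-level cap product with $1\otimes e$ are governed by the same datum, namely the boundary $d_n(e)\in \overline{I}\,C_{n-1}(\widetilde{P})$. The only technical care required is the careful tracking of the signs and the conjugations $\overline{\phantom{X}}$ dictated by the twisting $\omega$, ensuring that the conventions adopted for the cellular diagonal, for the Leibniz rule in the twisted setting, and for the definition of $\widehat{\nu}$ are mutually compatible. This is a bookkeeping exercise parallel to the one carried out in Lemma 4.3 of \cite{BB2010}.
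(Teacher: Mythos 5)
Your overall strategy is the same as the paper's: compute $\nu_{\mathbf{C}(\widetilde{P}),n}(p_*[X])$ on the chain level via the connecting homomorphism (your left-hand-side computation, yielding the representative $[\varphi]\mapsto\overline{\varphi(d_n(e))}$, is exactly what the paper does), and compare with a chain-level evaluation of $\theta\circ G_{-n+1}(\,\underline{\phantom{A}}\smallfrown(1\otimes e))$ using a cellular diagonal. However, your right-hand-side computation has a genuine gap at the step where you claim the cap product ``under the leading term of the cellular diagonal collapses to $\overline{\varphi(d_n(e))}$'', and consequently your conclusion that ``both sides represent the same $\Lambda$-homomorphism'' is too strong: the two representatives are not equal, only homotopic, and the lemma is an equality in $[F^{n-1}(\mathbf{C}(\widetilde{P})),I]$.

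Concretely, a diagonal $\Delta$ satisfies $\Delta e = e\otimes\lambda+\sum_\ell\sum_{0\le i<n}x_{\ell,i}\otimes y_{\ell,n-i}$ where the degree-zero coefficient $\lambda\in C_0=\Lambda$ is \emph{not} $1$ in general; one cannot simply normalize it away, since $\Delta$ is constrained to be a $\Lambda$-equivariant chain map. Two further arguments are therefore needed. First, one must show $\aug(\lambda)=1$: the paper derives $e=\aug(\lambda)e+\alpha de$ from the chain homotopy $id-(id\otimes\aug)\Delta=d\alpha+\alpha d$ and then uses that $[1\otimes e]$ generates $H_n(X;\mathbb{Z}^\omega)\cong\mathbb{Z}$ to conclude $\aug(\lambda-1)=0$, hence $\lambda=1+d_1(c_1)$ for some $c_1\in C_1(\widetilde{X})$. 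This is a nontrivial input (it uses the normalization of the fundamental class), not sign bookkeeping. Second, the resulting representative of the right-hand side is $[\varphi]\mapsto\overline{\varphi(d_n(e))}\,(1+d_1(c_1))$, which differs from your claimed representative by the map $[\varphi]\mapsto\overline{\varphi(d_n(e))}\,.d_1(c_1)$; one must observe that this difference factors through the free module $\Lambda$ (via $[\varphi]\mapsto\overline{\varphi(d_n(e))}$ followed by right multiplication by $d_1(c_1)$) and is therefore null-homotopic. Your proposal locates the ``technical care'' in signs and $\omega$-conjugations, but the actual content of the paper's proof is precisely these two steps; without them the identification of representatives fails.
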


\begin{proof}
  Take a diagonal $\Delta \colon \mathbf{C}(\widetilde{X}) \longrightarrow  \mathbf{C}(\widetilde{X}) \otimes_{\mathbb{Z}}  \mathbf{C}(\widetilde{X})$ and a chain homotopy $\alpha \colon  \mathbf{C}(\widetilde{X}) \longrightarrow  \mathbf{C}(\widetilde{X}) $ such that $id - (id \otimes \aug)\Delta = d \alpha + \alpha d$, where we have identified $\mathbf{C} \otimes_{\mathbb{Z}} \mathbb{Z}$ with $ \mathbf{C}$. 
  Let 
  \begin{equation*}
      \Delta e = e \otimes \lambda + \sum_{\ell}\sum_{0 \le   i   < n}x_{\ell,i} \otimes y_{\ell, n-i}
  \end{equation*}  
Direct calculation shows that $e = \aug(\lambda) e + \alpha de$. Since  $[1 \otimes e]$ generates ${H}_{n}\left(X; \mathbb{Z}^{\omega}\right) \cong \mathbb{Z}$, this yields $1 \otimes e = \aug(\lambda) \otimes e$, whence  $\aug( \lambda - 1) =0$.  Hence, $\lambda -1 \in I = \im(d_{1})$, or $ \lambda = 1 + d_{1}(c_{1})$ for some $c_{1} \in C_{1}(\widetilde{X})$.  Thus, given $\varphi \in \Hom_{\Lambda}\big(C_{n}(\widetilde{X}), \Lambda\big)$, 
 \begin{equation*}
 \varphi \smallfrown (1 \otimes e) = \overline{\varphi(e)}\big(1 + d_{1}(c_{1})\big)
 \end{equation*}
By direct calculation
 \begin{equation*}
  \Big(\theta \circ G_{-n+1}\big(\,\underline{\phantom{A}} \smallfrown (1 \otimes e)\big)\Big)\big([\varphi]\big) 
  = \overline{\varphi\big(d_{n}(e)\big)}(1 +d_{1}(c_{1}))
 \end{equation*}  
and
 \begin{equation*}
  \nu_{\mathbf{C}(\widetilde{P}), n}\big(p_{\ast}([X])\big)\big([\varphi]\big) 
  = \widehat{\nu}_{\mathbf{C}(\widetilde{P}), n}\big( [d_{n}(e)] \big)\big([\varphi]\big) 
  \end{equation*}  
Hence, by definition, $\nu_{\mathbf{C}(\widetilde{P}), n}\big(p_{\ast}([X])\big)$ is represented by the $\Lambda$-morphism
  \begin{equation*}
  F^{n-1}\big(\mathbf{C}(\widetilde{P})\big) \longrightarrow I, \quad [\varphi] \longmapsto \overline{\varphi\big(d_{n}(e)\big)}
  \end{equation*}
  To conclude the proof, note that $F^{n-1}\big(\mathbf{C}(\widetilde{P})\big) \longrightarrow I, \quad [\varphi] \longmapsto \overline{\varphi\big(d_{n}(e)\big)}.d_{1}(c_{1})$ is null-homotopic.
\end{proof}

As $\theta \circ G_{-n+1} \big(\,\underline{\phantom{A}} \smallfrown (1  \otimes e)\big)$ is a homotopy equivalence of $\Lambda$-modules, Lemma \ref{lem:nu} provides a necessary condition for realization.

\begin{theorem}\label{thm:necforrealisation}
Let $P$ be an $(n-2)$-type. Take $\omega \in H^{1}(P; \mathbb{Z}/2\mathbb{Z})$ and  $\mu \in H_{n}(P; \mathbb{Z}^{\omega})$. Then $(P, \omega, \mu)$ is the fundamental triple of a $PD_{n}$-complex only if  $\nu_{\mathbf{C}(\tilde{P}),n} (\mu)$ is a homotopy equivalence of left $\Lambda = \mathbb{Z}[\pi_{1}(P)]$-modules.
\end{theorem}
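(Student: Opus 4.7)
The plan is to deduce this as a direct corollary of Lemma \ref{lem:nu} together with the homotopy equivalence (\ref{homequi}). Suppose $(P, \omega, \mu)$ arises as the fundamental triple of some $PD_n$-complex $X$, so that $P \simeq P_{n-2}(X)$, $\omega = \omega_X$, and the Postnikov section $p \colon X \to P$ satisfies $p_*([X]) = \mu$. The theorem then becomes a statement about $\nu_{\mathbf{C}(\widetilde{P}), n}(p_*[X])$, which Lemma \ref{lem:nu} computes explicitly.

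First I would put $X$ into standard (or weakly standard, if $n=3$) form via Remark 2.3 and Lemma 3.6 of \cite{HJB-BB2010}, so that $C_n(\widetilde{X}) = C_n(\widetilde{X'}) \oplus \Lambda e$ with $1 \otimes e$ a cycle representing $[X]$ in $\mathbb{Z}^\omega \otimes_\Lambda \mathbf{C}(\widetilde{X})$. Since $p$ is cellular and the identity on $(n-1)$-skeleta, $C_i(\widetilde{X}) = C_i(\widetilde{P})$ for $i < n$, so the chain $d_n(e) \in C_{n-1}(\widetilde{P})$ and its class in $H_{n-1}(\overline{I}\,\mathbf{C}(\widetilde{P}))$ are exactly what the connecting homomorphism $\delta_n$ produces from $\mu = p_*[X]$, and the computation of $\nu_{\mathbf{C}(\widetilde{P}), n}(\mu)$ proceeds through the same diagonal $\Delta$ and chain homotopy $\alpha$ as used in Lemma \ref{lem:nu}.

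Next I would apply Lemma \ref{lem:nu} verbatim, which yields
\[
\nu_{\mathbf{C}(\widetilde{P}), n}(\mu) \;=\; \bigl[\theta \circ G_{-n+1}\bigl(\,\underline{\phantom{A}} \smallfrown (1 \otimes e)\bigr)\bigr].
\]
The homotopy equivalence (\ref{homequi}), which was obtained by combining Poincar\'e duality on $X$ with Lemma 4.3 of \cite{BB2010}, asserts precisely that the representing $\Lambda$-morphism $\theta \circ G_{-n+1}(\,\underline{\phantom{A}} \smallfrown (1 \otimes e)) \colon F^{n-1}(\mathbf{C}(\widetilde{P})) \to I$ is a homotopy equivalence of left $\Lambda$-modules. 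Hence $\nu_{\mathbf{C}(\widetilde{P}), n}(\mu)$ is an isomorphism in the stable category of $\Lambda$, as claimed.

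I do not expect any genuine obstacle: the real content of the theorem has already been absorbed into Lemma \ref{lem:nu} and (\ref{homequi}). The only point requiring attention is the bookkeeping that shows $e$, the distinguished chain on which both of those constructions depend, is genuinely compatible with the Postnikov section $p$ and represents $\mu$ after applying $p_*$. This follows from the cellularity of $p$ and the identification $C_i(\widetilde{X}) = C_i(\widetilde{P})$ for $i \le n-1$, together with the observation that $p_*\colon H_n(X;\mathbb{Z}^\omega) \to H_n(P;\mathbb{Z}^\omega)$ sends the class of $1\otimes e$ to $\mu$ by construction of the fundamental triple.
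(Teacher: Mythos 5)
Your proposal is correct and follows essentially the same route as the paper: both deduce the theorem from Lemma \ref{lem:nu} together with the homotopy equivalence (\ref{homequi}). The only cosmetic difference is that the paper explicitly introduces a second $(n-2)$-type $P'$ built from $X$ and transports the conclusion across a homotopy equivalence $f\colon P \to P'$ with $f_*(\mu)=i_*([X])$, whereas you identify $P$ with the Postnikov section of $X$ from the outset.
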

\begin{proof}
   Let $P$ be an $(n-2)$-type. Take $\omega \in H^{1}\big(P ;\mathbb{Z}/2\mathbb{Z}\big)$ and $\mu \in H_{n}(P; \mathbb{Z}^{\omega})$. Suppose that $(P, \omega, \mu)$ is the fundamental triple of the $PD_{n}$-complex, $X$. If $P^{\prime}$ is an $(n-2)$-type obtained by attaching to $X$ cells of dimension $n$ and higher, then there is  a homotopy equivalence $f \colon P \longrightarrow P^{\prime}$ with $f_{\ast}(\mu) = i_{\ast}([X])$, where $i \colon X \longrightarrow P^{\prime}$ is the inclusion. By Lemma \ref{lem:nu},
 \begin{equation*}
    \nu_{\mathbf{C}(\widetilde{P^{\prime}}) ,n}\big(i_{\ast}[X]\big) =\big[\theta \circ G_{-n+1}\big(\,\underline{\phantom{A}} \smallfrown (1  \otimes e)\big)\big]
\end{equation*}
and hence $\nu_{ \mathbf{C}(\widetilde{P}), n}(\mu)$ are homotopy equivalences of $\Lambda$-modules. 
\end{proof}

Now take a $PD_{n}$-complex, $X$, with $(n-2)$-connected universal cover. Then the $(n-2)$-type of $X$ is an Eilenberg-Mac Lane space, $K\big(\pi_{1}(X), 1\big)$, and we may identify the fundamental triple of $X$ with $\big( \pi_{1}(X), \omega, \mu\big)$, where $\mu$ is the image of $[X]$ in the group homology of $\pi_{1}(X)$.

\begin{lemma}\label{lem:Hi=0}
Let $(X, \omega, [X])$ be a $PD_{n}$-complex with $(n-2)$-connected universal cover. 
Then $\pi_1(X)$ is FP$_{n-1}$, and 
\( 
    H^{i}\big(\pi_{1}(X) ; \,^{\omega}\!\Lambda \big) = 0
\) 
for all $1 < i \leq n-1$.\end{lemma}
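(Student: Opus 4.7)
The idea is that both assertions can be read off the chain complex $\mathbf{C}(\widetilde{X})$: the hypothesis that $\widetilde{X}$ is $(n-2)$-connected turns its bottom part into a partial projective resolution of $\mathbb{Z}$, while Poincar\'e duality converts the desired cohomology of $\pi=\pi_1(X)$ into the homology of $\widetilde{X}$, which vanishes in the relevant range.

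For $FP_{n-1}$: by Wall's standardness theorem invoked earlier in the paper, $X$ may be taken to have finitely many cells in each dimension $\le n$, so $C_i(\widetilde{X})$ is a finitely generated free $\Lambda$-module for $i \le n$. Because $\widetilde{X}$ is $(n-2)$-connected, the augmented chain complex
\[
    C_{n-1}(\widetilde{X}) \longrightarrow C_{n-2}(\widetilde{X}) \longrightarrow \cdots \longrightarrow C_0(\widetilde{X}) \longrightarrow \mathbb{Z} \longrightarrow 0
\]
is exact. This is precisely a finitely generated partial projective resolution witnessing $FP_{n-1}$.

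For the vanishing of $H^i(\pi;{}^\omega\Lambda)$: first compute $H^i(X;{}^\omega\Lambda)$ using Poincar\'e duality. Unwinding the conjugation conventions of Section \ref{section:notation} gives $({}^\omega\Lambda)^\omega = \Lambda$ as right $\Lambda$-modules, so cap product with $[X]$ yields
\[
    H^i(X;{}^\omega\Lambda) \;\cong\; H_{n-i}(X;\Lambda) \;=\; H_{n-i}\bigl(\Lambda\otimes_\Lambda\mathbf{C}(\widetilde{X})\bigr) \;\cong\; H_{n-i}(\widetilde{X};\mathbb{Z}),
\]
which vanishes for $0 < n-i \le n-2$, i.e.\ precisely for $2 \le i \le n-1$. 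To transfer this to group cohomology, build a $K(\pi,1)$, call it $K$, by attaching cells of dimension $\ge n$ to $X$. Then $C_i(\widetilde{K}) = C_i(\widetilde{X})$ for $i \le n-1$, so the cochain complexes $\Hom_\Lambda(\mathbf{C}(\widetilde{K}),{}^\omega\Lambda)$ and $\Hom_\Lambda(\mathbf{C}(\widetilde{X}),{}^\omega\Lambda)$ agree through degree $n-1$ with identical differentials in degrees $\le n-2$. Consequently the restriction map $H^i(\pi;{}^\omega\Lambda) \to H^i(X;{}^\omega\Lambda)$ is an isomorphism for $i \le n-2$; and in degree $n-1$ the containment $\im d_n^X \subseteq \im d_n^K$ dualises to $\ker(d_n^K)^\ast \subseteq \ker(d_n^X)^\ast$, so after modding out the common coboundary the comparison is injective. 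Combined with the previous computation, this yields the stated vanishing.

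No single step is deep; the only book-keeping requiring attention is chasing the two twistings through Poincar\'e duality so that $({}^\omega\Lambda)^\omega$ really is the untwisted regular right module, together with verifying injectivity of the comparison at the top degree $n-1$.
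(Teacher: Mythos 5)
Your argument is correct and follows essentially the same route as the paper: a finite $(n-1)$-skeleton together with $(n-2)$-connectedness of $\widetilde{X}$ gives the finitely generated partial resolution witnessing FP$_{n-1}$, and Poincar\'e duality plus the agreement of $\mathbf{C}(\widetilde{X})$ and $\mathbf{C}(\widetilde{K})$ in degrees below $n$ gives the vanishing. Your extra care at degree $n-1$ (the injectivity of $H^{n-1}(\pi;{}^{\omega}\Lambda)\to H^{n-1}(X;{}^{\omega}\Lambda)$ via $\im d_n^X\subseteq\im d_n^K$) is a point the paper's proof passes over silently, and is a worthwhile addition.
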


\begin{proof}
Since $X$ is a $PD_{n}$-complex, it is finitely dominated, and so is homotopy equivalent to a complex with finite $(n-1)$-skeleton.
Thus we may assume that $X^{[n-1]}$ is finite.
We  construct an Eilenberg-Mac Lane space $K = K\big(\pi_{1}(X), 1\big)$ from $X$ by attaching cells of dimension $n$ and higher. 
As the universal cover $\widetilde{X}$ of $X$ is $(n-2)$-connected, the cellular chain complexes of the universal covers $\widetilde{X}$ and $\widetilde{K}$ coincide in degrees below $n$, that is $C_{i}(\widetilde{X}) = C_{i}(\widetilde{K})$ for $0 \le i < n$. 
In particular, these modules are finitely generated, and so $\pi_1(X)$ is FP$_{n-1}$.
 
Moreover, for $ 1 < i \leq n-1$,
\[
  H^{i}\big(\pi_{1}(X) ; \,^{\omega}\!\Lambda \big) = H^{i}(\widetilde{X}; \,^{\omega}\!\Lambda)
\cong  H_{n-i}(\widetilde{X} ; \Lambda) = 0
\]
\end{proof}

Necessary conditions for realization are a corollary to Lemma \ref{lem:Hi=0} and Theorem \ref{thm:necforrealisation}.

\begin{cor}[\textbf{Conditions for Realizability}]\label{cor:ftem}
Let $G$ be a group. Take $\omega \in H^{1}\big(G; \mathbb{Z}/2 \mathbb{Z}\big)$ and $\mu \in H_{n}(G; \mathbb{Z}^{\omega})$. If  $(G, \omega, \mu)$ is the fundamental triple of a $PD_{n}$-complex with $(n-2)$-connected universal cover, then $G$ is a finitely presentable group of type FP$_{n-1}$, $H^{i}(G; \,^{\omega}\!\Lambda ) = 0$ for $1< i \le n -1$ and
$\nu_{\mathbf{C}(\widetilde{K}), n}(\mu)$ is a homotopy equivalence of $\Lambda = \mathbb{Z}[G]$ modules.
\end{cor}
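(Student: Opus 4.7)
The plan is to assemble the corollary directly from the two results established just above: Lemma \ref{lem:Hi=0} supplies the algebraic finiteness and cohomology vanishing conclusions, while Theorem \ref{thm:necforrealisation} supplies the Turaev map conclusion, and the only work is to translate between the ``$(n-2)$-type'' formulation of the theorem and the ``group'' formulation of the corollary.

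First I would unpack the hypothesis. Suppose $(G,\omega,\mu)$ is the fundamental triple of a $PD_n$-complex $(X,\omega_X,[X])$ with $(n-2)$-connected universal cover. By definition of the fundamental triple for such an $X$ (as explained in the paragraph preceding Lemma \ref{lem:Hi=0}), the $(n-2)$-type $P=P_{n-2}(X)$ is an Eilenberg--Mac Lane space $K=K(G,1)$, $\omega\in H^1(G;\mathbb{Z}/2\mathbb{Z})$ coincides with $\omega_X$ under $\pi_1(X)\cong G$, and $\mu\in H_n(G;\mathbb{Z}^\omega)$ is the image $p_\ast[X]$ of the fundamental class under the Postnikov section $p\colon X\to K$. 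Finite presentability of $G=\pi_1(X)$ is immediate from the definition of a $PD_n$-complex.

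Next I would invoke Lemma \ref{lem:Hi=0} directly: it states precisely that $\pi_1(X)$ is of type FP$_{n-1}$ and that $H^i(\pi_1(X);{}^\omega\Lambda)=0$ for $1<i\le n-1$. Under the identification $G=\pi_1(X)$ this yields the first two conclusions of the corollary.

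Finally, for the Turaev map condition I would apply Theorem \ref{thm:necforrealisation} with $P=K=K(G,1)$, $\omega$ as above, and $\mu=p_\ast[X]\in H_n(K;\mathbb{Z}^\omega)=H_n(G;\mathbb{Z}^\omega)$. The theorem then asserts that $\nu_{\mathbf{C}(\widetilde{K}),n}(\mu)$ is a homotopy equivalence of left $\Lambda$-modules, which is the third conclusion. No step here is a genuine obstacle; the only mildly delicate point is checking that the identification of the $(n-2)$-type with $K(G,1)$ carries $\mu$ to $p_\ast[X]$ so that Theorem \ref{thm:necforrealisation} applies with this class, but this is exactly how the fundamental triple was defined in the passage just before Lemma \ref{lem:Hi=0}, so the argument is essentially a two-line citation.
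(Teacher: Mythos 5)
Your proposal is correct and matches the paper's own (implicit) argument: the paper states the corollary follows directly from Lemma \ref{lem:Hi=0} (giving FP$_{n-1}$ and the cohomology vanishing) together with Theorem \ref{thm:necforrealisation} applied with $P=K(G,1)$ (giving the Turaev map condition), which is precisely your assembly. The translation between the $(n-2)$-type formulation and the group formulation is handled exactly as in the paragraph preceding Lemma \ref{lem:Hi=0}, so nothing further is needed.
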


\section{Sufficiency of the Realization Condition}\label{section:sufficiency}

We now establish the sufficiency of the realization conditions in Corollary \ref{cor:ftem}.

Let $G$ be a finitely presentable group of type FP$_{n-1}$, with $n \ge 3$ and $H^{i}(G; \,^{\omega}\! \mathbb{Z}[G]) =0$ for $1 < i \leq n-1$.

Take an Eilenberg-Mac Lane space, $K^{\prime} = K(G,1)$, and identify the (co-)homologies of $G$ and $K^{\prime}$. Given $\omega \in H^{1}\big(G; \mathbb{Z}/2\mathbb{Z} \big)$ and $\mu \in H_{n}\big( G; \mathbb{Z}^{\omega}\big) $, with $\nu_{\mathbf{C}(\widetilde{K^{\prime}}), n}(\mu) $ a class of homotopy equivalences of $\mathbb{Z}[G]$-modules, we construct a $PD_{n}$-complex, $X$, with $(n-2)$-connected universal cover and fundamental triple $(G, \omega, \mu)$. 

Let $\widetilde{K^{\prime}} \longrightarrow K^{\prime}$ be the universal covering of $K^{\prime}$. By the hypotheses on $G$, we can choose ${K^{\prime}}$ with finitely many cells in each dimension $<n$. 

Let $h \colon F^{n-1}\big( \mathbf{C}(\widetilde{K^{\prime}})\big) \longrightarrow I$ be a representative of  $\nu_{\mathbf{C}(\widetilde{K^{\prime}}), n}(\mu)$. Then $h$ is a homotopy equivalence of $\Lambda$-modules. By Theorem 4.1 and Observation 1 in \cite{BB2010}, $h$ factors as
 \begin{equation*}
  \begin{tikzcd}
     F^{n-1}\big(\mathbf{C}\big(\widetilde{K^{\prime}}\big)\big) \arrow[>->] {r} &
     F^{n-1}\big(\mathbf{C}\big(\widetilde{K^{\prime}}\big)\big) \oplus \Lambda^{m} \arrow[>->] {r} & 
     I \oplus P \arrow[->>] {r} & I
  \end{tikzcd}
 \end{equation*}
for some projective $\Lambda$-module, $P$, and $m \in \mathbb{N}$. Let $B = (e^{0} \cup e^{n-1}) \cup e^{n}$ be the $n$-dimensional ball and replace $K^{\prime}$ by the Eilenberg-Mac Lane space $K = K^{\prime} \vee \Big({\bigvee_{i=1}^{m} B} \Big)$.

Then $F^{n-1}\big(\mathbf{C}(\widetilde{K}) \big) = F^{n-1}\big(\mathbf{C}(\widetilde{K^{\prime}})\big) \oplus \Lambda^{m}$ and the factorization of $h$ becomes
\begin{equation*}
 \begin{tikzcd}
   h \colon F^{n-1}\big(\mathbf{C}(\widetilde{K})\big) \arrow[>->]{r}{j} & I \oplus P \arrow[->>]{r}{pr_{I}} & I
 \end{tikzcd}
\end{equation*}
Consider the $\Lambda$-morphism, $\varphi$, given by the composition 
\begin{equation*}
 \begin{tikzpicture}
   \draw
    node (1) at (0,0) {$C^{n-1}(\widetilde{K}) = \,  ^{\omega}\!\Hom_{\Lambda}\big(C_{n-1} (\widetilde{K}), \Lambda \big) $}  
    node (2) at (5,0) {$F^{n-1}\big(\mathbf{C}(\widetilde{K})\big)$}
    node (3) at (7.5,0) {$I \oplus P $}
    node (4) at (10,0) {$\Lambda \oplus P$}
   ;
 \draw[->>] (1)--(2) node [midway,above]{$p$};
 \draw[>->] (2)--(3) node [midway,above]{$j$};
 \draw[->] (3)--(4) node [midway, above] {\phantom{i}$\begin{bmatrix} i & 0 \\ 0 & id \end{bmatrix} $\phantom{a}};
 \end{tikzpicture}
 \end{equation*}
where $p$ is the projection onto the cokernel, and $ i \colon I \rightarrowtail \Lambda$  the inclusion. Since $F^{n-1}\big(\mathbf{C}(\widetilde{K}) \big) = C_{n-1}(\widetilde{K})/\im(d^{\ast}_{n-1})$ by definition, 
$\varphi \circ d^{\ast}_{n-1}=0$. As $C_{n-1}(\widetilde{K}^{[n-1]})$ is a finitely generated free $\Lambda$-module, the natural   map 
\[
^{\omega}\varepsilon \colon C_{n-1}(\widetilde{K}^{[n-1]}) \longrightarrow C_{n-1}(\widetilde{K}^{[n-1]})^{\ast\ast}
\]
is an isomorphism. Define
  \begin{equation*}
       d_{n} : = \, \big(^{\omega}\varepsilon\big)^{-1} \circ \,\varphi^{\ast}\colon (\Lambda \oplus P)^{\ast} \longrightarrow C_{n-1}(\widetilde{K})
    \end{equation*}
It follows from the naturality of \,$^{\omega}\varepsilon$ that $d_{n-1} \circ d_{n} = 0$.
 
\medskip
 
We first consider the case when $P$ is free, so that $P \cong \Lambda^{q}$ for some $q \in \mathbb{N}$ and $\Lambda \oplus P \cong \Lambda^{q+1}$.
 
 \medskip

Since  ${\widetilde{K}}^{[n-1]}$ is $(n-2)$-connected, the Hurewicz homomomorphism 
\[
h_{q} \colon \pi_{q}\big(\widetilde{K}^{[n-1]}\big) \longrightarrow H_{q}\big(\widetilde{K}^{[n-1]}\big)
\] 
is an isomorphism for $ q \le n-1$ and we obtain the map
\begin{align*}
      \varphi^{\prime}\colon \Lambda^{q+1} \cong (\Lambda \oplus P)^{\ast}  &\longrightarrow \ker(d_{n-1}) = H_{n-1}(\widetilde{K}^{[n-1]}) \xrightarrow[]{\ h_{n-1}^{-1} \ } \pi_{n-1}(\widetilde{K}^{[n-1]})\\ 
      x \quad & \longmapsto \quad h_{n-1}^{-1} \big( [d_{n}(x)] \big)
\end{align*}

Let $\mathbf{C} $ be the chain complex of $\Lambda$-modules
\begin{equation*}
 \Lambda^{q+1} \cong \big(\Lambda \oplus P\big)^{\ast} \xrightarrow{\ d_{n} \ } C_{n-1}(\widetilde{K}^{[n-1]}) \xrightarrow{ \ d_{n-1} \ } \ \cdots \ \xrightarrow{ \ \quad \ } C_{1}(\widetilde{K}^{[n-1]}) \xrightarrow{ \ \quad \ } \Lambda
\end{equation*}
Then  $Y = ( \mathbf{C}, \varphi', K^{[n-1]})$ is a homotopy system of order $n$. As $C_{i} = 0$ for $i > n$, \ \(
{H}^{n+2}(Y ; \Gamma_{n}Y) = 0$ and, by Proposition 8.3 in \cite{HJB-BB2010}, there is a homotopy system $(\mathbf{C}, 0, X)$ of order $n+1$ realising $Y$, with $X$ an $n$-dimensional $CW$-complex. By construction, $\mathbf{C}(\widetilde{X}) = \mathbf{C}$ and the universal cover of $X$ is $(n-2)$-connected. The inclusion $ i \colon K^{[n-1]} \longrightarrow K$ extends to a map 
\begin{equation*}
    f  \colon X \longrightarrow K = K(G,1)
\end{equation*}
and we may consider $\omega \in H^{1}\!\left( K; \mathbb{Z}/2\mathbb{Z}\right)$ as element of $ H^{1}\!\left( X; \mathbb{Z}/2\mathbb{Z}\right)$.

\begin{prop}\label{prop:realisefree}
 $X$ is a $PD_{n}$-complex with fundamental triple $(G, \omega, \mu)$, that is
 \begin{itemize}
   \item[(i)] $ \mathbb{Z} \cong H_{n}(X; \mathbb{Z}^{\omega}) = \langle [X] \rangle $;
   
   \item[(ii)] $f_{\ast}([X]) = \mu$;
   
   \item[(iii)] $\underline{\phantom{A}} \smallfrown [X] \colon H^{r}(X; \, ^{\omega}\!\Lambda) \longrightarrow H_{n-r}(X ; \Lambda)$ is an isomorphism for every $r \in \mathbb{Z}$.
 \end{itemize}
\end{prop}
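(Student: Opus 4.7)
My plan is to establish the chain-level Poincar\'e duality at the heart of (iii) first, and derive (i), (ii), (iii) from it.

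Under the identification $C_{n}(\widetilde{X}) = (\Lambda \oplus P)^{\ast}$, let $e$ denote the generator dual to the $\Lambda$-summand of $\Lambda \oplus P$, i.e., the summand into which the augmentation ideal $I$ embeds in the definition of $\varphi$. A direct unwinding of the formulas shows that $\varphi^{\ast}(e)(\psi) = \overline{\psi(d^{K}_{n}(\tilde{z}))}$, where $\tilde{z} \in C_{n}(\widetilde{K})$ is any chain lift of a cycle representing $\mu$ for which the Turaev-map formula $[\psi] \mapsto \overline{\psi(d^{K}_{n}(\tilde{z}))}$ recovers the chosen representative $h$. Hence $d_{n}(e) = (^{\omega}\!\varepsilon)^{-1}(\varphi^{\ast}(e)) = d^{K}_{n}(\tilde{z})$, which lies in $\overline{I}\cdot C_{n-1}(\widetilde{K})$, so $1 \otimes e$ is a cycle in $\mathbb{Z}^{\omega} \otimes_{\Lambda} \mathbf{C}(\widetilde{X})$ and I set $[X] := [1 \otimes e]$.

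For (iii), I would show that cap with $1 \otimes e$ defines a chain equivalence $\underline{\phantom{A}} \smallfrown (1 \otimes e) \colon \mathbf{C}^{\ast}(\widetilde{X}) \to \mathbf{C}_{n-\ast}(\widetilde{X})^{\omega}$. By a standard change-of-coefficient argument using projective resolutions of $M$ together with the Five Lemma, this yields (iii) for every coefficient module $M$. In intermediate degrees $1 < r < n$, the hypothesis $H^{r}(G; \,^{\omega}\!\Lambda) = 0$ transfers to $H^{r}(X; \,^{\omega}\!\Lambda) = 0$ (because $X$ and $K$ share the $(n-1)$-skeleton and the degree-$n$ modification is the addition of a projective module), while correspondingly $H_{n-r}(X; \Lambda) = 0$ by the acyclicity of $\mathbf{C}(\widetilde{K})$. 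The substantive isomorphism therefore lies in degrees $r \in \{0, n\}$: there I would identify the chain-level cap product with the composition defining $\varphi$ and, via Lemma \ref{lem:nu} combined with Lemma 4.3 of \cite{BB2010}, recognize it as a representative of $\nu_{\mathbf{C}(\widetilde{K}), n}(\mu)$, namely $h$ --- which is a homotopy equivalence by hypothesis. Once (iii) is known, (i) is immediate because $H_{n}(X; \mathbb{Z}^{\omega}) \cong H^{0}(X; \mathbb{Z}) = \mathbb{Z}$ and $[X]$ is the distinguished generator. For (ii), I extend $f$ cellularly by $f_{\#}(e) := \tilde{z}$; this is a well-defined chain map because $f_{\#}$ is the identity on $\widetilde{K}^{[n-1]}$ and $d^{K}_{n}(\tilde{z}) = d_{n}(e)$ by construction, so $f_{\ast}([X]) = [1 \otimes \tilde{z}] = \mu$.

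The main obstacle, I expect, is converting $h$'s status as a stable-category homotopy equivalence into an honest chain equivalence $\mathbf{C}^{\ast}(\widetilde{X}) \simeq \mathbf{C}_{n-\ast}(\widetilde{X})^{\omega}$ valid for all coefficient modules. Two subtleties interact here: the degree-$n$ perturbation of $\mathbf{C}(\widetilde{K})$ must be precisely calibrated by $\varphi$ to realize duality, and the projective summand $P$ in the factorization of $h$ must be absorbed by the balls $B$ wedged onto $K^{\prime}$ --- the reason for replacing $K^{\prime}$ by $K$ in the construction. I expect the cleanest packaging combines the homotopy-system machinery of \cite{HJB-1991} with Lemma 4.3 of \cite{BB2010}, propagating the top-degree equivalence throughout the complex via the vanishing hypotheses on $G$.
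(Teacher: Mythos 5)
Your overall architecture matches the paper's: define $[X]$ via the distinguished generator dual to the $\Lambda$-summand of $\Lambda\oplus P$, kill the intermediate degrees with the vanishing hypotheses on $G$ and the $(n-2)$-connectivity of $\widetilde{K}^{[n-1]}$, and identify the remaining cap products with the homotopy equivalence $h$. But there is a genuine gap in your treatment of (iii): the case $r=1$ falls through the cracks. It is excluded from your ``intermediate degrees $1<r<n$'' and is not in your claimed substantive set $\{0,n\}$, yet neither $H^{1}(X;{}^{\omega}\!\Lambda)\cong H^{1}(G;{}^{\omega}\!\Lambda)$ nor $H_{n-1}(X;\Lambda)\cong H_{n-1}(\widetilde{X};\mathbb{Z})$ vanishes in general (already for $G=\mathbb{Z}$ both are $\mathbb{Z}$). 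Relatedly, the homotopy equivalence $h\colon F^{n-1}(\mathbf{C})\to I\cong G_{1}(\mathbf{C})$ naturally encodes the cap product at the level of cokernels in cohomological degree $n-1$, not in degrees $0$ and $n$; converting it into the isomorphisms $H^{0}(X;{}^{\omega}\!\Lambda)\to H_{n}(X;\Lambda)$ and $H^{1}(X;{}^{\omega}\!\Lambda)\to H_{n-1}(X;\Lambda)$ is exactly the step your sketch omits. The paper does this by combining the direct computation of $H^{n}(X;{}^{\omega}\!\Lambda)\to H_{0}(X;\Lambda)$ with $h$ to obtain a chain homotopy equivalence of the two-term quotient complexes in top degrees, then applying ${}^{\omega}\!\Hom_{\Lambda}(\,\underline{\phantom{A}}\,,\Lambda)$ and invoking Lemma 2.1 of \cite{BB2010} to transfer the isomorphism to degrees $r=0,1$. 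Without some such dualization (or an independent argument at $r=1$), your claimed chain equivalence is not established, and the Five-Lemma/change-of-coefficients step has nothing to feed on.

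A secondary, fillable gap: in (ii) you posit a chain $\tilde{z}\in C_{n}(\widetilde{K})$ with $1\otimes\tilde{z}$ representing $\mu$ \emph{and} with $[\psi]\mapsto\overline{\psi(d^{K}_{n}(\tilde{z}))}$ equal to the chosen representative $h$ on the nose. Since $h$ was only chosen within its homotopy class (so as to factor through $I\oplus P$), what you get for free is a unique $w\in\ker(d_{n-1})\cap\overline{I}C_{n-1}(\widetilde{K})$ realizing $h$, hence (by acyclicity of $\mathbf{C}(\widetilde{K})$) some $\tilde{z}$ with $d^{K}_{n}(\tilde{z})=w$; but identifying $[1\otimes\tilde{z}]$ with $\mu$ rather than some other class with the same Turaev image requires the injectivity of $\nu_{\mathbf{C}(\widetilde{K}),n}$ (Lemma 2.5 of \cite{Tu}). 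That injectivity is precisely the ingredient the paper uses for (ii), after computing directly that $\nu_{\mathbf{C}(\widetilde{X}),n}([X])=[h]=\nu_{\mathbf{C}(\widetilde{K}),n}(f_{*}([X]))$; you should cite it rather than assert the existence of $\tilde{z}$.
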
 

\begin{proof}

\textbf{(i)} \ As $\mathbf{C}(\widetilde{X}) = \mathbf{C}$ is a chain complex of finitely generated free $\Lambda$-modules, the natural map
 \begin{align*}
     \eta_{\mathbf{C}} \colon \mathbb{Z}^{\omega} \otimes_{\Lambda} \mathbf{C} \longrightarrow \Hom_{\Lambda}\big(\,^{\omega}\!\Hom_{\Lambda}(\mathbf{C}, \Lambda), \mathbb{Z}\big)
 \end{align*}
 is an isomorphism. Hence, \( H_{n} \big(X ; \mathbb{Z}^{\omega} \big) = \ker(1 \otimes d_{n}) \cong \ker (\varphi^{+}) \), for 
 \begin{align*}
   \varphi^{+} \colon \Hom_{\Lambda} \big( 
   \Lambda \oplus \Lambda^{q}
   , \mathbb{Z} \big) &\longrightarrow \Hom_{\Lambda}\big( ^{\omega}\!\Hom_{\Lambda}( C_{n-1}(\widetilde{K}^{[n-1]}), \Lambda), \mathbb{Z}\big),\\ 
   \psi \qquad  &\longmapsto \qquad  \psi \circ \varphi
 \end{align*}

Since both $\pi$ and $j$ are surjective, both $\pi^{+}$ and $j^{+}$ are injective, whence
\begin{equation*}
   \ker(\varphi^{+}) = \ker \left( \left( \begin{bmatrix} i & 0 \\ 0 & id \end{bmatrix} \circ j \circ \pi \right)^{+}\right)
       = \ker \left( \begin{bmatrix} 
       i & 0 \\
       0 & id 
       \end{bmatrix}^{+}\right) 
       = \ker \left( \begin{bmatrix} 
       i^{+} & 0 \\
       0 & id 
       \end{bmatrix}\right]
       \cong \ker(i^{+})
\end{equation*}

But $I$ is generated by elements $1 - g \ \ (g\in G)$ and  $(\psi \circ i)(1-g) = 0$ for $\psi \in \Hom_{\Lambda}( \Lambda, \mathbb{Z})$. Hence, $\ker (\varphi^{+}) \cong \Hom_{\Lambda}( \Lambda, \mathbb{Z}) \cong \mathbb{Z}$, generated by $\aug \circ \, pr_{\Lambda} \colon \Lambda \oplus \Lambda^{q} \longrightarrow \mathbb{Z}$, the projection onto the first factor, followed by the augmentation. 

Let $[X] = [1 \otimes x] \in H_{n}(X;\mathbb{Z}^{\omega})$ be the homology class corresponding to $ \aug \circ \, pr_{\Lambda}$ under the isomorphism  \( H_{n} \big(X ; \mathbb{Z}^{\omega} \big) = \ker(1 \otimes d_{n}) \cong \ker (\varphi^{+}) \cong \Hom_{\Lambda}( \Lambda, \mathbb{Z})\). Then $x \in (\Lambda \oplus \Lambda^{q})^{\ast}$ is projection onto the first factor.
  
    \smallskip
  
  \textbf{(ii)\phantom{i}} By the proof of Lemma \ref{lem:nu}, $\nu_{\mathbf{C}(\widetilde{X}), n}\big([X]\big)$ is represented by
  \begin{equation*}
     F^{n-1} \big(\mathbf{C}(\widetilde{X})\big) \longrightarrow I, \quad [\psi] \longmapsto \overline{\psi(d_{n}(x))}
  \end{equation*}

 Thus, given $\psi \in C_{n-1}(\widetilde{X})^{\ast} = C_{n-1}(\widetilde{K})^{\ast}$,
  \begin{align*}
   \overline{\psi(d_{n}(x))} & = \overline{\psi\big(\,^{\omega}\!\ev^{-1}(x \circ \varphi)\big)} \\
                     &  = \,^{\omega}\!\ev \big(\,^{\omega}\!\ev^{-1}(x \circ \varphi)\big)(\psi) \\
                     & =( x \circ \varphi)(\psi) \\
                     &  = (x \circ \begin{bmatrix} i & 0 \\ 0 & id\end{bmatrix} \circ j \circ \pi)(\psi) \\
                     &  = (i \circ pr_{I} \circ j)([\psi]))\\
                     & = h([\psi])
  \end{align*}
Hence, $\nu_{\mathbf{C}(\widetilde{X}), n}([X])$ is the homotopy class of $h$ so that 
\begin{equation*}
\nu_{\mathbf{C}(\widetilde{K}), n}(\mu) = \nu_{\mathbf{C}(\widetilde{X}), n}([X])
 = \nu_{\mathbf{C}(\widetilde{K}), n}(f_{\ast}([X]))
\end{equation*}
By Lemma 2.5 in \cite{Tu}, $\nu_{\mathbf{C}(\widetilde{K}), n}$ is injective, whence $\mu = f_{\ast}([X])$.

\smallskip
\textbf{(iii)} \ First consider  $1 \le i < n-1$.  Then $  H_{i} (\widetilde{X}; \Lambda) = H_{i}({\widetilde{K}}^{[n-1]}; \Lambda) = 0$.  

By the definition of $\varphi$,
  \[
   H^{n-1}(\widetilde{X};  \, ^{\omega}\!\Lambda^{\omega}) = 0
   \]
Moreover, by hypothesis, 
  \begin{equation*}
  H^{n-i}(\widetilde{X}; \, \Lambda^{\omega}) = H^{n-i}({\widetilde{K}^{[n-1]}}; \,\Lambda^{\omega}) 
  \cong H^{n-i}(G;\Lambda^{\omega}) 
  = 0
  \end{equation*}
for $1 < i < n-1$.  Thus
  \[
    \underline{\phantom{A}} \smallfrown (1 \otimes [X]) \colon H^{n-i}(X; \,^{\omega}\!\Lambda) \longrightarrow H_{i}(X ; \Lambda)
  \]
  is an isomorphism for $1 \le  i < n-1$.
    
  Next consider $i = 0$. As $P$ and $\Lambda \oplus P$ are free, $\mathbf{C}(X)$ is a chain complex of free $\Lambda$-modules. Since the (twisted) evaluation map from a finitely generated free $\Lambda$-module to its double dual is an isomorphism, 
  \begin{align*}
     H^{n}(\widetilde{X}; \, ^{\omega}\!\Lambda) 
            & = \, ^{\omega}\!\Hom_{\Lambda}(C_{n}(X), \, ^{\omega}\!\Lambda)/\im(\varphi^{\ast})^{\ast} \\[1ex]
             & = (\Lambda \oplus P)^{\ast\ast}/\im(\varphi^{\ast})^{\ast} \\[1ex]
           & \cong \Lambda \oplus P/\im(\varphi) \\
           & \cong \Lambda/I \\
           & \cong \mathbb{Z}
  \end{align*}
  
  The class, $[\gamma]$, of the image of $(1,0) \in \Lambda \oplus P$ under the (twisted) evaluation isomorphism generates $H^{n}(X; \, ^{\omega}\!\Lambda)$ and so, by Lemma 4.4 of \cite{BB2010},
  \begin{equation*}
    [\gamma] \smallfrown [X] = [\gamma] \smallfrown [1 \otimes x]
        = [\overline{\gamma(x)}. e_{0}] 
        = [e_{0}]  
  \end{equation*}     
 where $e_{0} \in C_{0}(X)$ is a chain representing the base point. Thus, 
 \[
    \underline{\phantom{A}} \smallfrown [X] \colon H^{n}(\widetilde{X} ; \, ^{\omega}\!\Lambda) \longrightarrow H_{0}(\widetilde{X}; \Lambda)
 \]
 is an isomorphism. 
 
Finally, note that by the above,  $\underline{\phantom{A}} \smallfrown (1 \otimes x)$ yields the chain homotopy equivalence
 \begin{equation*}
   \begin{tikzpicture}[scale=0.8]
      \draw
         node (imn-1) at (-3.5,2) {$\im d^{\ast}_{n-1}$}
         node (im2) at (-3.5,0) {$\im d_{2}$}
         node (cn-1) at (0,2) {$C_{n-1}(X)^{\ast}$}
         node (c1) at (0,0) {$C_{1}(X)$}
         node (cn) at (3.5,2) {$C_{n}(X)^{\ast}$}
         node (c0) at (3.5,0) {$C_{0}(X)$}
      ;
   \draw[>->] (imn-1)--(cn-1);
   \draw[->>] (cn-1)--(cn);
   \draw[>->] (im2)--(c1);
   \draw[->>] (c1)--(c0);
   \draw[->] (imn-1)--(im2);
   \draw[->] (cn-1)--(c1) node[midway,left] {$\underline{\phantom{a}}  \smallfrown (1\otimes x)$};
   \draw[->] (cn)--(c0) node[midway, right] {$\underline{\phantom{a}}  \smallfrown (1\otimes x)$};
   \end{tikzpicture}
 \end{equation*}
 Applying the functor $^{\omega}\!\Hom_{\Lambda}(\underline{\phantom{A}}\, , \Lambda)$, we  obtain the chain homotopy equivalence  $\big(\,\underline{\phantom{A}} \smallfrown (1\otimes x)\big)^{\ast}$, inducing isomorphisms
  \begin{align*}
           \big(\,  \underline{\phantom{A}} \smallfrown [X] \big)^{\ast}\colon    H^{0}(X; ^{\omega}\!\Lambda) &\longrightarrow H_{n}(X ; \Lambda) \\ 
          \big(\,  \underline{\phantom{A}} \smallfrown [X] \big)^{\ast}\colon        H^{1}(X; ^{\omega}\!\Lambda) & \longrightarrow H_{n-1}(X ; \Lambda)
 \end{align*}
By Lemma 2.1 in \cite{BB2010}, $\big(\, \underline{\phantom{A}} \smallfrown (1 \otimes x)\big)^{\ast}$ induces an isomorphism in homology if and only if \ $\underline{\phantom{A}} \smallfrown (1 \otimes x)$ does, whence 
\begin{align*}
  \underline{\phantom{A}} \smallfrown [X] \colon H^{0}(X; \, ^{\omega}\Lambda) &\longrightarrow H_{n}(X; \Lambda)\\ 
  \underline{\phantom{A}} \smallfrown [X] \colon H^{1}(X; \, ^{\omega}\Lambda) &\longrightarrow H_{n-1}(X; \Lambda)
\end{align*}
 are isomorphisms.
\end{proof}

Suppose now that $P$ is projective, but not free.
 
Then there is a finitely generated $\Lambda$-module $Q$ and a natural number $q$ with $P^{\ast} \oplus Q \cong \Lambda^{q}$.  The natural isomorphisms
\begin{equation*}
 ( \Lambda \oplus P)^{\ast} \oplus \Lambda^{\infty}  \cong  \Lambda^{\ast} \oplus P^{\ast} \oplus(Q \oplus P^{\ast} \oplus \cdots)
  \cong \Lambda \oplus (P^{\ast}  \oplus Q \oplus P^{\ast} \oplus Q \cdots) 
  \cong\Lambda^{\infty} 
\end{equation*}  
show that $( \Lambda \oplus P)^{\ast} \oplus \Lambda^{\infty}$ is a free $\Lambda$-module. 
 
 Consider the chain complex $\mathbf{D}$
\begin{equation*}
 \begin{tikzpicture}[scale=0.8]
   \draw
      node (0) at (0,1.5) {$0$}
      node (LPL) at (3,1.5) {$(\Lambda \oplus P)^{\ast} \oplus \Lambda^{\infty}$}
      node (KL) at (9,1.5) {$C_{n-1} \big(\widetilde{K}^{[n-1]}\big) \oplus \Lambda^{\infty}$}
      node (1) at (13.5,1.5) {}
      node (Cn-1) at (3,0) {$ C_{n-2}\big(\widetilde{K}^{[n-1]}\big)$}
      node (Cn-2) at (8.5,0) {$ C_{n-3}\big(\widetilde{K}^{[n-1]}\big)$}
      node (2) at (12,0) {$\cdots$}
   ;
   \draw[->] (0)--(LPL);
   \draw[->] (LPL)--(KL) node[midway, above] {$ \begin{bmatrix} d_{n} & 0 \\ 0 & id \end{bmatrix} $};
   \draw[->] (KL)--(1) node[midway, above] {\phantom{a}{$\begin{bmatrix} d_{n-1} & 0 \end{bmatrix} $}\phantom{a}};
   \draw[->] (Cn-1)--(Cn-2) node[midway,above] {$d_{n-2}$};
   \draw[->] (Cn-2)--(2) ;
 \end{tikzpicture}
\end{equation*}

We  attach infinitely many $n$-balls to ${K}^{[n-1]}$ to obtain a $CW$-complex, $K^{\prime}$, whose cellular chain complex coincides with $\mathbf{D}$ in dimensions below $n$. 
Then  ${\widetilde{K'}}^{[n-1]}$  is $(n-2)$-connected, and the Hurewicz homomomorphisms $h_{q} \colon \pi_{q}\big(\widetilde{K'}^{[n-1]}\big) \longrightarrow H_{q}\big(\widetilde{K'}^{[n-1]}\big)$ are isomorphisms for $ q \le n-1$. Defining the map
\begin{align*}
      \varphi^{\prime} \colon ( \Lambda \oplus P)^{\ast} \oplus \Lambda^{\infty} &\longrightarrow \ker(d_{n-1}) = H_{n-1}(\widetilde{K'}^{[n-1]}) \xrightarrow[]{\ h_{n-1}^{-1} \ } \pi_{n-1}(\widetilde{K'}^{[n-1]})\\ 
      x \quad & \longmapsto \quad h_{n-1}^{-1} \big( [d_{n}(x)] \big)
\end{align*}
we obtain the homotopy system $Y' = ( \mathbf{D}, \varphi^{\prime}, K'^{[n-1]})$ of order $n$. As $D_{i} = 0$ for $i > n$, \ \( \widehat{H}^{n+2}(Y' ; \Gamma_{n}Y') = 0$. By Proposition 8.3 in \cite{HJB-BB2010}, there is then a homotopy system $(\mathbf{C}, 0, X')$ of order $n+1$ realising $Y'$, with $X'$ an $n$-dimensional $CW$-complex.
 
Note that $\mathbf{D}$, the chain complex of $X'$, is chain homotopy equivalent to the chain complex $\mathbf{W}$
 \begin{equation*}
   \begin{tikzpicture}[scale=0.7]
   \draw
   node (0) at (0,1.5) {$\cdots$}
   node (PQ1) at (2.5,1.5) {$P^{\ast} \oplus Q$}
   node (PQ2) at (6.25,1.5) {$P^{\ast} \oplus Q$}
   node (PQ3) at (10,1.5) {$P^{\ast} \oplus Q$}
   node (LPQ) at (14.5,1.5) {$( \Lambda \oplus P)^{\ast} \oplus Q$}
   node (Cn-1) at (19,1.5) {$C_{n-1}\big(\widetilde{K}^{[n-1]}\big) $}
   node (1) at (3,0) {$ $}
   node (Cn-2) at (6.25,0) {$C_{n-2}\big(\widetilde{K}^{[n-1]}\big) $}
   node (Cn-3) at (11,0) {$C_{n-3}\big(\widetilde{K}^{[n-1]}\big) $}
   node (2) at (14.25,0) {$ \cdots$}
   ;
   \draw[->] (0)--(PQ1);
   \draw[->] (PQ1)--(PQ2) node [midway, above] {$ \begin{bmatrix} id & 0 \\ 0 & 0 \end{bmatrix}$};
   \draw[->] (PQ2)--(PQ3) node [midway, above] {$ \begin{bmatrix}  0 & 0 \\ 0 & id \end{bmatrix}$};
  \draw[->] (PQ3)--(LPQ) node[midway, above] {$  \begin{bmatrix}  0 & 0 \\ 0 & 0 \\ 0 & id \end{bmatrix}$};
   \draw[->] (LPQ)--(Cn-1) node [midway, above] {$\begin{bmatrix} d_{n} \\ 0 \end{bmatrix} $};
  \draw[->] (1)--(Cn-2) node [midway, above] {$d_{n-1}$};
  \draw[->] (Cn-2)--(Cn-3) node [midway, above] {$d_{n-2}$};
  \draw[->] (Cn-3)--(2);
   \end{tikzpicture}
 \end{equation*}
 
By  Theorem 2 of \cite{W1966}, there is a $CW$-complex, $X$, with cellular chain complex $\mathbf{W}$, homotopy equivalent to $X^{\prime}$. The proof that $X$ realizes $(G, \omega, \mu)$ is similar to the proof of Proposition \ref{prop:realisefree}.  
 
This completes the proof of Theorem A.

 \section{Decomposition as Connected Sum}\label{section:connsum}
 
Wall constructed a new $PD_{n}$-complex from given ones  using the \emph{connected sum of $PD_{n}$-complexes} (c.f.~\cite{W1967}). This allows  $PD_{n}$-complexes to be decomposed as connected sum of other, simpler $PD_{n}$-complexes.

Take $PD_{n}$-complexes $(X_{k}, \omega_{k}, [X_{k}])$ for $k = 1, 2$. Then we may express $X_{k}$ as the mapping cone
\begin{equation*}
       X_{k} = X_{k}^{\prime}\cup_{f_{k}}e_{k}^{n}
\end{equation*}
for suitable $f_{k} \colon S^{n-1} \longrightarrow X_{k}^{\prime}$. Here, $X_{k}^{\prime}$ is an $(n-1)$-dimensional $CW$-complex when $n > 3$, and when $n=3$, $X_{k}^{\prime}$ is 3-dimensional with $H^{3}(X_{k}^{\prime}; B) = 0$ for all coefficient modules, $B$. For $k = 1, 2$, let  $\iota_{k} \colon X_{k}^{\prime} \longrightarrow X_{1}^{\prime} \vee X_{2}^{\prime}$ be the canonical inclusion of the $k^{\text{th}}$ summand and put
\begin{equation*}
     \widehat{f}_{k} :=\iota_{k} \circ f_{k} \colon S^{n-1} \longrightarrow X_{1}^{\prime} \vee X_{2}^{\prime}
\end{equation*} 
so that $\widehat{f}_{k}$ determines an element of $\pi_{n-1}\big(X_{1}^{\prime} \vee X_{2}^{\prime}\big)$. Let  $f_{1} + f_{2} \colon S^{n-1} \longrightarrow X_{1}^{\prime} \vee X_{2}^{\prime}$ represent the homotopy class $[\widehat{f}_{1}] + [\widehat{f}_{2}]$. Then the connected sum,  $X = X_{1} \# X_{2} $, of $X_{1}$ and $X_{2}$ is  the mapping cone of $f_{1} + f_{2}$,
\begin{equation*}
     X_{1} \# X_{2} := \big(X_{1}^{\prime} \vee X_{2}^{\prime}\big) \cup_{f_{1}+f_{2}} e^{n}
\end{equation*}

It follows from the Seifert-van Kampen Theorem that 
\begin{equation*}
    \pi_{1}(X) = \pi_{1}(X_{1}) \ast \pi_{1}(X_{2})
\end{equation*}
The canonical inclusion $in_{k} \colon \pi_{1}(X_{k}) \longrightarrow \pi_{1}(X)$ induces a (left, resp.~right) $\mathbb{Z}[\pi_{1}(X_{k})]$-module structure on any  (left, resp.~right) $\Lambda = \mathbb{Z}[\pi_{1}(X)]$-module. In particular, $\Lambda$ is a $\pi_{1}(X_{k})$-bimodule. By the universal property of the free product, the group homomorphisms $\omega_{X_{k}} = in_{k}^{\ast}\big(\omega_{X}\big)$ uniquely determine a group homomorphism $\omega_{X} \colon \pi_{1}(X) \longrightarrow \mathbb{Z}/2\mathbb{Z}$.
For $k = 1, 2$, let $L_k$ be the functor $\Lambda\otimes_{\mathbb{Z}[\pi_{1}(X_{k})]} \underline{\phantom{A}}\,$.

Let $B$ be the subcomplex of $\mathbf{C}(\widetilde{X})$ containing the $n$-cell attached by $f_{1} + f_{2}$. Then $\mathbf{B}$ is a Poincar{\'{e}} duality chain complex (\cite{HJB-BB2010} p.2361) and it follows from Theorem 2.3 of \cite{BB2010} that $ L_{k}\big(\mathbf{C}(\widetilde{X}_{k})\big)$ is also a Poincar{\'{e}} duality chain complex.

Let $x$ denote the chain representing the $n$-cell attached by $f_{1} + f_{2}$. Repeated application of Theorem 2.3 of \cite{BB2010} shows that $L_{1}\big(\mathbf{C}(\widetilde{X}_{1})\big) + L_{2}\big(\mathbf{C}(\widetilde{X}_{2})\big)$ is a Poincar{\'{e}} duality chain complex. Hence $(X, \omega_{X}, [1 \otimes x])$ is a Poincar{\'{e}} duality complex. This is the \emph{connected sum of $(X_{1}, \omega_{X_{1}}, [X_{1}])$ and $(X_{2}, \omega_{X_{2}}, [X_{2}])$}, introduced by Wall in \cite{W1967}.

We have seen that a necessary condition for a Poincar{\'{e}} duality complex to be a non-trivial connected sum of Poincar{\'{e}} duality complexes is that its fundamental group be a non-trivial free product of groups. We show that, in our context,  this condition is also sufficient.

\begin{mainb}
A $PD_{n}$-complex with $(n-2)$-connected universal cover decomposes as a non-trivial connected sum if and only if its fundamental group decomposes as a non-trivial free product of groups.
\end{mainb}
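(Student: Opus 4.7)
The only-if direction is the observation preceding the statement: for a non-trivial connected sum $X \simeq X_1 \# X_2$, the Seifert--van Kampen theorem gives $\pi_1(X) \cong \pi_1(X_1) \ast \pi_1(X_2)$ non-trivially. For the converse, given $X$ with fundamental triple $(G,\omega,\mu)$ where $G = G_1 \ast G_2$ is a non-trivial free product, my plan is to decompose $\mu$ along the free product, realize each piece via Theorem A as a $PD_n$-complex $X_k$ with $(n-2)$-connected universal cover and fundamental triple $(G_k,\omega_k,\mu_k)$, and then identify $X$ with $X_1 \# X_2$ via the Classification Theorem.

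Set $\omega_k := \omega \circ in_k$, where $in_k \colon G_k \hookrightarrow G$ is the canonical inclusion. I would take $K := K(G_1,1) \vee K(G_2,1)$, which models $K(G,1)$ because its universal cover is a Bass--Serre tree of copies of the contractible spaces $\widetilde{K(G_k,1)}$ glued at points, hence is contractible. The wedge induces a $\Lambda$-linear direct sum decomposition of the reduced chain complex $\overline{\mathbf{C}}(\widetilde{K})$ into summands of the form $\Lambda \otimes_{\Lambda_k} \overline{\mathbf{C}}(\widetilde{K(G_k,1)})$, yielding the homology decomposition
\[
H_i(G; \mathbb{Z}^\omega) \cong H_i(G_1; \mathbb{Z}^{\omega_1}) \oplus H_i(G_2; \mathbb{Z}^{\omega_2}) \quad (i \geq 1).
\]
Thus $\mu = (j_1)_\ast\mu_1 + (j_2)_\ast\mu_2$ for unique $\mu_k \in H_n(G_k;\mathbb{Z}^{\omega_k})$, and the analogous cohomological decomposition transfers the FP$_{n-1}$ and vanishing hypotheses of Theorem A from $G$ to each $G_k$.

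The key technical step is that the Turaev maps split compatibly with the wedge. Using the standard decomposition $I \cong (\Lambda \otimes_{\Lambda_1} I_1) \oplus (\Lambda \otimes_{\Lambda_2} I_2)$ of the augmentation ideal of a free product, together with the compatibility of the short exact sequence $0 \to \overline{I}\mathbf{C}(\widetilde{K}) \to \mathbf{C}(\widetilde{K}) \to \mathbb{Z}^\omega \otimes_\Lambda \mathbf{C}(\widetilde{K}) \to 0$ with the wedge splitting, the homotopy class $\nu_{\mathbf{C}(\widetilde{K}),n}(\mu)$ is identified with the direct sum of the $\Lambda$-base-changes of $\nu_{\mathbf{C}(\widetilde{K(G_k,1)}),n}(\mu_k)$. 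Since the whole map is a homotopy equivalence by Corollary \ref{cor:ftem}, each summand is a homotopy equivalence of $\Lambda_k$-modules. Theorem A then produces the $PD_n$-complexes $X_k$ with $(n-2)$-connected universal covers realizing $(G_k,\omega_k,\mu_k)$.

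Finally, $X_1 \# X_2$ has $(n-2)$-connected universal cover: its cover is a Bass--Serre tree of copies of the $\widetilde{X_k}$ joined along $(n-1)$-discs, and a Mayer--Vietoris argument on successive finite subtrees shows the union remains $(n-2)$-connected. Its fundamental triple is $(G,\omega,\mu_1+\mu_2) = (G,\omega,\mu)$ by the construction of the connected sum, so the Classification Theorem yields an oriented homotopy equivalence $X \simeq X_1 \# X_2$. The main obstacle is the third paragraph: the identification of $\nu_{\mathbf{C}(\widetilde{K}),n}(\mu)$ with a direct sum must be verified in the stable category of $\Lambda$-modules rather than merely at the chain level, so care is needed to ensure that homotopies of $\Lambda_k$-module maps, after base change, assemble into the correct $\Lambda$-homotopy witnessing equivalence.
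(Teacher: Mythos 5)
Your proposal follows essentially the same route as the paper: decompose $\mu$ via the wedge $K(G_1,1)\vee K(G_2,1)$, show the Turaev map splits as a block sum over the induced decomposition of the augmentation ideal, deduce that each factor's Turaev map is a homotopy equivalence (the step the paper delegates to Turaev's argument for $n=3$), realize each $(G_k,\omega_k,\mu_k)$ by Theorem A, and conclude via the Classification Theorem. The proposal is correct, and the one point you flag as delicate (descending the stable equivalence from $\Lambda$-modules to $\Lambda_k$-modules) is exactly the point the paper also handles only by citation.
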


\begin{proof}
   It only remains to prove sufficiency.
   
   Let $(X, \omega_{X}, [X])$ be a $PD_{n}$-complex with $(n-2)$-connected universal cover and with $\pi_{1}(X) = G_{1} \ast G_{2}$,  for non-trivial groups $G_{1}, G_{2}$. As $\pi_{1}(X)$ is finitely presentable, so are $G_{1}$ and $G_{2}$. For $j = 1,2$, let   $K_{j} = K(G_{j};1)$ be an Eilenberg-Mac Lane space with finite 2-skeleton. Then $K_{1} \vee K_{2}$ is an Eilenberg-Mac Lane space, $K(G_{1} \ast G_{2};1)$, and
\begin{equation*}
    H_{n} (K; \mathbb{Z}^{\omega}) = H_{n}(K_{1}; \mathbb{Z}^{\omega_{1}}) \oplus H_{n}(K_{n};\mathbb{Z}^{\omega_{2}})
\end{equation*}
where $\omega_{j} \in H^{1}(K_{j}; \mathbb{Z}/2\mathbb{Z}) $ is the restriction of the orientation character $ \omega \in H^{1}(K; \mathbb{Z}/2\mathbb{Z})$. Thus, $\mu_{X} = \mu_{1} + \mu_{2}$, with $\mu_{j} \in H_{n} (K_{j} ; \mathbb{Z}^{\omega_{j}})$ for $j = 1,2$.

By the discussion above, if the $PD_{n}$-complex $X_{j}$, with $(n-2)$-connected universal cover realizes the fundamental triple $(G_{j}, \omega_{j}, \mu_{j})$, then the connected sum of $X_{1}$ and $X_{2}$ realizes the fundamental triple of $X$, whence, by the Classification Theorem in \cite{HJB-BB2010}, $X$ is orientedly homotopy equivalent to $X_{1} \# X_{2}$. Hence, it is sufficient to construct realizations of $(G_{j}, \omega_{j}, \mu_{j}) \ (j=1,2).$ 

Let $L_{j}$ be the functor $\Lambda \otimes_{\mathbb{Z}[\pi_{1}(X_{j})]}\underline{\phantom{A}}\,$, so that for $ i \ge 1$
\begin{equation*}
    C_{i}(\widetilde{K}) = L_{1}\big(C_{i}(\widetilde{K}_{1})\big) \oplus L_{2}\big(C_{i}(\widetilde{K}_{2})\big)
\end{equation*}
It follows that 
\begin{equation*}
   F^{n-1}\big(\mathbf{C}(\widetilde{K})\big) = L_{1}\Big(F^{n-1}\big(\mathbf{C}(\widetilde{K}_{1})\big)\Big) \oplus L_{2}\Big(F^{n-1}\big(\mathbf{C}(\widetilde{K}_{2})\big)\Big)
\end{equation*}
and
\begin{equation*}
  I \big(\pi_{1}(X)\big) = L_{1}\big(I(G_{1})\big) \oplus L_{2} \big(I(G_{2})\big)
\end{equation*}
where the canonical inclusion is given by
\begin{equation*}
    L_{j}\big(I(G_{j})\big) \longrightarrow I(G), \quad \mu \otimes \lambda \longmapsto \mu\lambda
\end{equation*}
for $\mu \in \mathbb{Z}[\pi_{1}(X)]$ and $\lambda \in I(G)$ viewed as an element of $I\big(\pi_{1}(X)\big)$.

Let \( \varphi_{j} \colon F^{n-1} \big(\mathbf{C}(\widetilde{K}_{i})\big) \longrightarrow I(G_{j}) \) be a $\mathbb{Z}[G_{j}]$-morphism representing the class $ \nu_{C(\widetilde{K}_{j}), n}(\mu_{j})$. Then the class, $ \nu_{C(\widetilde{K}, n}(\mu)$ of homotopy equivalences is represented by
\[
  \begin{CD}
  L_{1}\Big(F^{n-1}\big(\mathbf{C}(\widetilde{K}_{1}) \big) \Big) \oplus L_{2}\Big(F^{n-1}\big(\mathbf{C}(\widetilde{K}_{2}) \big) \Big)& \quad = \ F^{n-1}\big(\mathbf{C}(\widetilde{K}) \big) \\
  @VV{L_{1}(\varphi_{1}) \oplus L_{2}(\varphi_{2})}V\\
  L_{1}\big(I(G_{1}) \oplus L_{2} \big(I(G_{2})\big) &  = \  I(G)\phantom{ABC}
  \end{CD}
\] 
and it follows from the proof of the analogous proposition for $n = 3$ on pp.269--270 in \cite{Tu}, that $\varphi_{j}$ is a homotopy equivalence of modules. 
By Theorem A, $(G_{j}, \omega_{j}, \mu_{j})$ is realized by a $PD_{n}$-complex, $X_{j}$, with $(n-2)$-connected universal cover. 
\end{proof}

\section{The Centralizer Condition of Crisp}\label{section:crisp}

In the remainder of this paper we shall consider indecomposable
$\mathrm{PD}_n$-complexes with $(n-2)$-connected universal covers.
The arguments of \cite{Cr} for the case $n=3$ apply equally well in higher dimensions.
When $n$ is odd they imply that the indecomposable PD$_n$-complexes 
of this type are either aspherical or have virtually free fundamental group.
Theorem 17 of \cite{Cr} then leads to strong constraints 
on the possible groups in the latter case, as in \cite{Hi12}.
The consequences are different when $n$ is even.
In particular, there may be no simple characterization of the indecomposables.
However, if the fundamental group is indecomposable, virtually free 
and non-trivial then it seems likely that it either has two ends 
or is $\mathbb{Z}/2\mathbb{Z}$.

Let $X$ be a $\mathrm{PD}_n$-complex with $(n-2)$-connected universal cover $\widetilde{X}$,
and let $\pi=\pi_1(X)$ and $\omega=w_1(X)$.
Let $\pi^+=\mathrm{Ker}(\omega)$ and let $X^+$ be the corresponding orientable covering space.

Since $\pi$ is $FP_2$ it acts without inversions on a ``terminal $\pi$-tree" 
$T$ and so is the fundamental group of a finite graph of groups $(\mathcal{G},\Gamma)$,
where $\Gamma=\pi\backslash{T}$, 
all vertex groups are finite or have one end and all edge groups are finite.
(See Theorem VI.6.3 of \cite{DD}.)


A {\it graph of groups} $(\mathcal{G},\Gamma)$ consists of a graph $\Gamma$ 
with origin and target functions $o$ and $t$ from the set of edges 
$E=E(\Gamma)$ to the set of vertices $V=V(\Gamma)$, 
and a family $\mathcal{G}$ of groups $G_v$ 
for each vertex $v$ and subgroups $G_e\leq{G_{o(e)}}$ for each edge $e$,
with monomorphisms $\phi_e:{G_e}\to{G_{t(e)}}$.
(We shall usually suppress the maps $\phi_e$ from our notation.)
All edges are oriented, but we do not use this,
and in considering paths or circuits in $\Gamma$ 
we shall not require that the edges be compatibly oriented.
The {\it fundamental group} of $(\mathcal{G},\Gamma)$ is the group 
$\pi\mathcal{G}$ 
with presentation
\[\langle G_v,~t_e,~\forall{v}\in{V(\Gamma)},~\forall{e}\in{E(\Gamma)}
\mid~t_egt_e^{-1}=\phi_e(g),~\forall{g}\in{G_e},~\forall{e}\in{E(\Gamma)},~t_f=1,~
\forall{f}\in{E(\Upsilon)}\rangle,\]
where $\Upsilon$ is some maximal tree for $\Gamma$.
The generator $t_e$ is the {\it stable letter\/} associated to the edge $e$.
Different choices of maximal tree give isomorphic groups.
We may assume that $(\mathcal{G},\Gamma)$ is {\it reduced\/}: 
if an edge joins distinct vertices then the edge group is isomorphic to 
a proper subgroup of each of these vertex groups.
The corresponding $\pi$-tree $T$ is incompressible in the terminology of \cite{DD},
and so $T$ and $\mathcal{G}$ are essentially unique, 
by Proposition IV.7.4 of \cite{DD}.
We may also assume that $\pi$ is indecomposable as a proper free product, 
by Theorem B, and so $(\mathcal{G},\Gamma)$ is {\it indecomposable}: 
all edge groups are nontrivial.

Let $V_f$ be the subset of vertices $v$ such that $G_v$ is finite.
Every finite subgroup of $\pi$ fixes a vertex of $T$,
and so is conjugate into $G_v$, for some $v\in{V}$. 
(See Corollary I.4.9 of \cite{DD}.)
Thus vertex subgroups are {\it maximal\/} finite subgroups of $\pi\mathcal{G}$.
Following \cite{Cr}, 
we shall say that a vertex of $T$ is finite or infinite if
its stabilizer in $\pi$ is finite or infinite, respectively.

Note that if $\sigma$ is a subgroup of finite index in $\pi$ then it acts on $T$ with stabilizers finite or one-ended,
and so is the fundamental group of a finite graph of groups 
$(\mathcal{G}_\sigma,\Gamma_\sigma)$
where $\Gamma_\sigma=\sigma\backslash{T}$ projects naturally onto $\Gamma$.
However $(\mathcal{G}_\sigma,\Gamma_\sigma)$ may be neither reduced nor indecomposable.

The graph of groups $(\mathcal{G},\Gamma)$ gives rise to a ``Chiswell" exact sequence of right $\mathbb{Z}[\pi]$-modules:
\begin{equation*}
\begin{CD}
0\to\oplus_{v\in{V_f}}\mathbb{Z}[G_v\backslash\pi]@>\Delta>>
\oplus_{e\in{E}}\mathbb{Z}[G_e\backslash\pi]\to{H^1(\pi;\mathbb{Z}[\pi])}\to0,
\end{CD}
\end{equation*}
in which the image of a coset $G_vg$ of $G_v$ in $\pi$ under $\Delta$ is
\[
\Delta(G_vg)=\Sigma_{o(e)=v}(\Sigma_{G_eh\subset{G_v}}G_ehg)-
\Sigma_{t(e)=v}(\Sigma_{G_eh\subset{G_v}}G_ehg).
\]
The outer sums are over edges $e$ and the inner sums are over cosets of $G_e$ in $G_v$.
(See \cite{Ch}.)

If $C$ is a finite subgroup of $\pi$ then the summands $\mathbb{Z}[G_v\backslash\pi]$ and $\mathbb{Z}[G_e\backslash\pi]$
are themselves direct sums of permutation modules,
when considered as right $\mathbb{Z}[C]$-modules.
Poincar\'e duality and a homological argument by devissage give isomorphisms
\[
H_s(C;H_{n+1}(\widetilde{X};\mathbb{Z}))\cong{H_s(C;  {^\omega{H}}^1(\pi;\mathbb{Z}[\pi]))}\cong{H_{s+n}(C;\mathbb{Z})},
\]
for all $s\geq1$. 
(See Lemma 2.10 of \cite{Hi}.)
The work in \cite{Cr} relates these two aspects of $H^1(\pi;\mathbb{Z}[\pi])$,
when $n=3$.

An edge $e$ is a {\it loop isomorphism} at $v$ if $o(e)=t(e)=v$ 
and the inclusions induce isomorphisms $G_e\cong{G_v}$.
It is an {\it MC-tie} if $o(e)\not=t(e)$ and 
$G_e$ has index 2 in each of $G_{o(e)}$ and $G_{t(e)}$.
(We shall give the motivation for this name later.)
If $G$ is a subgroup of $\pi$ let $C_\pi(G)$ and $N_\pi(G)$ denote the
centralizer and normalizer of $G$ in $\pi$, respectively.
If $x\in\pi$ let $\langle{x}\rangle$ be the cyclic subgroup generated by $x$ and $C_\pi(x)=C_\pi(\langle{x}\rangle)$.

\begin{lemma}
\label{Ninf}
If an edge $e$ is a loop isomorphism or an MC-tie then $N_\pi(G_e)$ is infinite.
If a vertex group $G_v$ is finite then $N_\pi(G_v)$ 
is infinite if and only if there is a loop isomorphism at $v$.
\end{lemma}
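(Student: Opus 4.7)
The plan is to exploit the action of $\pi$ on the Bass--Serre tree $T$ associated with $(\mathcal{G},\Gamma)$, together with the observation that for any subgroup $H\le\pi$ the normalizer $N_\pi(H)$ preserves the fixed subtree $T^H$ setwise.

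For the loop isomorphism case of (a), I would note that $\phi_e$ is an automorphism of $G_v=G_e$, so the defining relation $t_egt_e^{-1}=\phi_e(g)$ places $t_e$ in $N_\pi(G_e)$; since a loop edge cannot lie in any maximal tree for $\Gamma$, the element $t_e$ is a genuine stable letter and acts on $T$ as a hyperbolic isometry of positive translation length, so in particular it has infinite order. For the MC-tie case, because $o(e)\neq t(e)$ I may first enlarge the maximal tree to contain $e$ without changing $\pi$, arranging that $t_e=1$ and that $G_e$ is identified in $\pi$ with $\phi_e(G_e)$. The index-two inclusions force $G_e$ to be normal in both $G_u$ and $G_v$, so any choice of $a\in G_u\setminus G_e$ and $b\in G_v\setminus G_e$ lies in $N_\pi(G_e)$. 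By Bass--Serre theory the subgroup generated by $G_u$ and $G_v$ inside $\pi$ is $G_u*_{G_e}G_v$, in which $G_e$ is normal with quotient $(\mathbb{Z}/2\mathbb{Z})*(\mathbb{Z}/2\mathbb{Z})\cong D_\infty$; thus $ab$ has infinite order modulo $G_e$, hence infinite order in $\pi$, forcing $N_\pi(G_e)$ to be infinite.

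For (b), the ``if'' direction follows immediately from (a), since a loop isomorphism has $G_e=G_v$. For the converse I would assume that $G_v$ is finite and that there is no loop isomorphism at $v$; picking the lift $\tilde v$ of $v$ with $\mathrm{Stab}_\pi(\tilde v)=G_v$, the aim is to show $T^{G_v}=\{\tilde v\}$, whence $N_\pi(G_v)\le\mathrm{Stab}_\pi(\tilde v)=G_v$ will be immediate. If an edge $\tilde f$ of $T$ at $\tilde v$ were fixed by $G_v$, then its stabilizer --- a $G_v$-conjugate of some $G_f$, for an edge $f$ of $\Gamma$ incident to $v$ --- would contain $G_v$, so finiteness of $G_v$ would force the inclusion $G_f\hookrightarrow G_v$ at $v$ to be an isomorphism. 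Reducedness of $(\mathcal{G},\Gamma)$ rules out $f$ joining $v$ to a distinct vertex, so $f$ must be a loop at $v$; finiteness of $G_v$ then also promotes the other inclusion to an isomorphism, making $f$ a loop isomorphism at $v$, contrary to hypothesis.

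The step that demands most care is the MC-tie part of (a): one must first justify the change of maximal tree (which leaves $\pi$, and hence $N_\pi(G_e)$, unchanged) and then invoke the standard Bass--Serre identification of $\langle G_u,G_v\rangle$ with $G_u*_{G_e}G_v$. Once these are in place, the infinite dihedral quotient makes the infinite order of $\langle ab\rangle$ --- and therefore of $N_\pi(G_e)$ --- transparent, and the remainder of the argument reduces to routine tree-stabilizer bookkeeping along the lines of Corollary I.4.9 of \cite{DD}.
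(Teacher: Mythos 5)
Your proof is correct and follows essentially the same route as the paper's: the stable letter normalizes $G_e=G_v$ in the loop-isomorphism case, a product $\alpha\beta$ with $\alpha\in G_u\setminus G_e$ and $\beta\in G_v\setminus G_e$ supplies an infinite-order element of $N_\pi(G_e)$ in the MC-tie case, and the converse in the second assertion is the same fixed-subtree-plus-reducedness argument, merely phrased contrapositively. The additional detail you supply (hyperbolicity of $t_e$, the $D_\infty$ quotient of $G_u*_{G_e}G_v$, the change of maximal tree) only makes explicit what the paper leaves implicit.
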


\begin{proof}
If $e$ is a loop isomorphism at $v$ then the stable letter $t_e$
associated with the edge normalizes $G_e=G_v$.
If $e$ is an MC-tie with ends $u,v$ 
then $G_e$ is normal in each of $G_u$ and $G_v$.
Hence if $\alpha\in{G_u}\setminus{G_e}$ 
and $\beta\in{G_v}\setminus{G_e}$ then $\alpha\beta$ is an element of infinite order in $N_\pi(G_e)$.

Suppose that $G_v$ is finite and $N_\pi(G_v)$ is infinite.
The fixed point set of the action of $G_v$ on a terminal $\pi$-tree
is a nonempty subtree, which is preserved by $N_\pi(G_v)$.
Since $N_\pi(G_v)$ is infinite this subtree must have a nontrivial edge,
with image $e$ in $\Gamma$ having $v$ as one vertex.
Then $G_e=G_v$, since this edge is fixed by $G_v$,
and so $e$ must be a loop isomorphism at $v$,
since $\mathcal{G}$ is reduced.
\end{proof}

Suppose first that $n$ is odd, 
and that $C$ is a finite cyclic subgroup of $\pi$.
Then $H_{n+1}(C;\mathbb{Z})=0$ and $H_{n+2}(C;\mathbb{Z})\cong{C}$.
The arguments of Theorems 14 and 17 of  \cite{Cr} extend immediately to show that 
(i) if $X$ is orientable  and indecomposable then either $X$ is aspherical or $\pi$ is virtually free;
and (ii) if $g\in\pi$ has prime order $p>1$ and $C_\pi(g)$ is infinite then
$p=2$, $w(g)=-1$ and $C_\pi(g)$ has two ends.
We may then apply the analysis of \cite{Hi12} to further constrain the possibilities.
However, implementing the Realization Theorem may be difficult,
since it involves the module $F^{n-1}(C(\widetilde{K}))=\coker(d^*_{n-2})$.
As there is no algorithm for computing the homology 
of a finitely presentable group in degrees $>1$ \cite{GGS},
there may be no algorithm to provide an explicit matrix for $d_{n-2}$ if $n>3$, in general.
This may not be a problem when $\pi$ is virtually free.
In particular, is $S_3*_{\mathbb{Z}/2\mathbb{Z}}S_3$ the fundamental group of 
a $(2k-1)$-connected $\mathrm{PD}_{2k+1}$-complex for any $k>2$?

When $n$ is even and $C$ is finite cyclic
$H_{n+1}(C;\mathbb{Z})\cong{C}$ and $H_{n+2}(C;\mathbb{Z})=0$.
In this case Lemma 2.10 of \cite{Hi} gives
\[
H_1(C; {^\omega{H}}^1(\pi;\mathbb{Z}[\pi]))\cong
{H_{n+1}(C;\mathbb{Z})}\cong{C}.
\]
Let $T$ be a terminal $\pi$-tree, with $e(T)$ ends and $\infty(T)$ vertices with infinite stabilizers,
and let $\xi(T)=e(T)+\infty(T)-1$.
If $g\in\pi$ has prime order then (since $n$ is even) Remark 13 of \cite{Cr} gives
\[
either~\omega(g)=1~and~\xi(T^{\langle{g}\rangle})=1~or~
\omega(g)=-1~and~\xi(T^{\langle{g}\rangle})=-1.
\]
(The $\pi$-tree $T$ is denoted $X$ in \cite{Cr}).
The argument of Theorem 17 of \cite{Cr} then gives the following:

\begin{theorem}
\label{Cinf}
If $n$ is even, $x\in\pi$ has order $m>1$ and $C_\pi(x)$ is infinite then $C_\pi(x)$ 
is virtually $\mathbb{Z}$ and either $\omega(x)=1$ or $4|m$.
Moreover, $x$ is not conjugate into any infinite vertex group.
\end{theorem}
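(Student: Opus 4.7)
The strategy is to adapt Crisp's proof of Theorem 17 of \cite{Cr}, using Remark 13 from the display just above the theorem together with the identification $H_1(C;{}^\omega H^1(\pi;\mathbb{Z}[\pi]))\cong H_{n+1}(C;\mathbb{Z})$ from Lemma 2.10 of \cite{Hi} in the even-$n$ setting. I first reduce to elements of prime order: for any prime $p\mid m$, set $y=x^{m/p}$, so $y$ has order $p$, $C_\pi(x)\subseteq C_\pi(y)$ (so $C_\pi(y)$ is infinite), $\omega(y)=\omega(x)^{m/p}$, and $y$ is conjugate into a vertex group whenever $x$ is. Once I establish, for each such $y$, that $\omega(y)=1$, that $y$ is not conjugate into any infinite vertex group, and that $C_\pi(y)$ is virtually $\mathbb{Z}$, the conclusions for $x$ follow: from $\omega(x)^{m/p}=1$ for every prime $p\mid m$ a short parity argument gives $\omega(x)=1$ or $4\mid m$, and the other two statements descend through $C_\pi(x)\subseteq C_\pi(y)$.

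Applying Remark 13 to a prime-order $y$ with infinite centralizer yields either $\omega(y)=1$ and $\xi(T^{\langle y\rangle})=1$, or $\omega(y)=-1$ and $\xi(T^{\langle y\rangle})=-1$. In the second alternative $e(T^{\langle y\rangle})=\infty(T^{\langle y\rangle})=0$, so $T^{\langle y\rangle}$ is a finite subtree all of whose vertices have finite $\pi$-stabilizer; the image of the $C_\pi(y)$-action in $\mathrm{Aut}(T^{\langle y\rangle})$ is finite, and its kernel is contained in a single (finite) vertex stabilizer, so $C_\pi(y)$ is finite, a contradiction. Hence $\omega(y)=1$ and $e(T^{\langle y\rangle})+\infty(T^{\langle y\rangle})=2$. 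The sub-case $(e,\infty)=(0,2)$ is excluded by the same device: the index-$\le 2$ subgroup of $C_\pi(y)$ fixing both infinite-stabilizer vertices of $T^{\langle y\rangle}$ lies in their pointwise stabilizer, which is contained in an edge stabilizer (hence finite). The sub-case $(e,\infty)=(2,0)$ delivers what is wanted: $T^{\langle y\rangle}$ is a line with finite-stabilizer vertices, on which the infinite group $C_\pi(y)$ acts with finite point stabilizers, so it is virtually $\mathbb{Z}$, and $y$ (hence $x$) is not conjugate into any infinite vertex group.

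The remaining sub-case $(e,\infty)=(1,1)$ is the main obstacle: here $C_\pi(y)$ fixes the unique infinite-stabilizer vertex $v$ of $T^{\langle y\rangle}$, so is contained in the one-ended vertex group $G_v$, which then harbors the torsion element $y$. Ruling this out is the heart of the matter; I expect it to require combining Poincar\'e duality on $X$ with the $(n-2)$-connectedness of $\widetilde X$ applied to the vertex subcomplex carrying $G_v$, forcing $G_v$ to be torsion-free and thereby contradicting $y\in G_v$. This is the even-$n$ analogue of Crisp's treatment of one-ended vertex groups in \cite{Cr}, and the parity-sensitive computation (tracking $\omega$ in the duality relation with $n$ even) is where the real work will lie.
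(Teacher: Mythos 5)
Your reduction to prime order, your treatment of the orientation-reversing case, and your handling of the sub-cases $(e,\infty)=(0,2)$ and $(2,0)$ all match what the paper does (the paper itself compresses the case analysis into the phrase ``as in \cite{Cr}, $g$ fixes a ray $(\varepsilon,\varepsilon')$ but fixes no infinite vertex''). However, your treatment of the sub-case $(e,\infty)=(1,1)$ is a genuine gap, and the strategy you propose for closing it cannot work. You suggest using Poincar\'e duality and the $(n-2)$-connectedness of $\widetilde{X}$ to force the one-ended vertex group $G_v$ to be torsion-free. But one-ended vertex groups in this setting need not be torsion-free: the Example in Section \ref{section:otherconsequences} of the paper produces $PD_n$-complexes with $(n-2)$-connected universal cover whose graph-of-groups decomposition has one-ended vertex groups $\mu\rtimes\mathbb{Z}/p\mathbb{Z}$ containing $p$-torsion, and Theorem \ref{edgegp}(2),(3) is devoted precisely to finite-order elements lying in edge groups adjacent to one-ended vertex groups. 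So ``$y\in G_v$ with $G_v$ one-ended'' is not in itself contradictory; what is contradictory is that $C_\pi(y)$ be \emph{infinite} in this configuration.

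The correct way to kill the $(1,1)$ case is the same device you already used for $(0,2)$: $C_\pi(y)$ preserves $T^{\langle y\rangle}$, hence fixes its unique end \emph{and} its unique infinite-stabilizer vertex $\tilde v$; it therefore fixes pointwise the (unique, infinite) ray in $T^{\langle y\rangle}$ from $\tilde v$ to that end, and so lies in the stabilizer of any edge of that ray, which is finite since all edge groups are finite. This contradicts the hypothesis that $C_\pi(y)$ is infinite, with no appeal to duality needed. With that one-line repair your argument is complete and agrees with the paper's (and Crisp's) proof; the rest of your write-up, including the descent of the three conclusions from $y$ back to $x$ and the parity argument giving $\omega(x)=1$ or $4\mid m$, is correct.
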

 
\begin{proof}
If $g\in\pi$ has prime order $p$ and $\omega(g)=-1$ 
then $p=2$ and $\xi(T^{\langle{g}\rangle})=-1$.
Hence $g$ does not fix any end or infinite vertex,
and so $T^{\langle{g}\rangle}$ is a non-empty finite tree 
with all vertices finite.
Since $C_\pi(g)$ leaves invariant $T^{\langle{g}\rangle}$, it is finite.
Hence if $C_\pi(g)$ is infinite then $\omega(g)=+1$ and $\xi(T^{\langle{g}\rangle})=1$.
As in \cite{Cr}, it follows that $g$ fixes a ray $(\varepsilon,\varepsilon')$,
but fixes no infinite vertex, 
and $C_\pi(g)$ is virtually $\mathbb{Z}$.

Suppose now that $x\in\pi$ has finite order $m$ and $C_\pi(x)$ is infinite.
If $m=2k$ then $x^k$ has order 2 and $C_\pi(x)\leq{C_\pi(x^k)}$,
so $C_\pi(x^k)$ is infinite.
Hence $\omega(x^k)=1$, and so either $\omega(x)=1$ or $4|m$.

If $p$ is a prime factor of $m$ 
then $x^\frac{m}p$ has order $p$, 
and so does not fix any infinite vertex of $T$.
Hence the same is true of $x$, and so it  is not conjugate into any infinite vertex group.
\end{proof}

A group $\pi$ is virtually $\mathbb{Z}\Leftrightarrow\pi$ has two ends
$\Leftrightarrow\pi$ has a (maximal) finite normal subgroup $F$ such that $\pi/F\cong\mathbb{Z}$ or
the infinite dihedral group $D_\infty=\mathbb{Z}/2\mathbb{Z}*\mathbb{Z}/2\mathbb{Z}$.
(See pp. 129--130 of \cite{DD}.)

We shall apply Theorem \ref{Cinf} together with the 
\noindent {\bf Normalizer Condition\/} [17, Proposition 5.2.4]:

\smallskip
{\it a proper subgroup of a nilpotent group is properly contained in its normalizer,}

\smallskip
and the following simple lemma.

\begin{lemma}
\label{CinN}
If $C$ is a finite subgroup of $\pi$ then 
$C_\pi(C)$ has finite index in $N_\pi(C)$.
If $A*_CB$ or $A*_C\varphi\leq\pi$ and $N_\pi(C)$ is finite or has two ends
then either $N_A(C)=C$ or $N_B(C)=C$ or $[N_A(C):C]=[N_B(C):C]=2$.
\end{lemma}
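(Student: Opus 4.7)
The plan is to split into the two assertions. For the first, conjugation defines a homomorphism $N_\pi(C)\to\mathrm{Aut}(C)$ whose kernel is $C_\pi(C)$; since $C$ is finite so is $\mathrm{Aut}(C)$, forcing $[N_\pi(C):C_\pi(C)]\leq|\mathrm{Aut}(C)|<\infty$.

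For the second, focus on the amalgamated case $A*_CB\leq\pi$; the HNN case is handled analogously, using the stable letter in place of one factor. Write $N_A=N_A(C)$ and $N_B=N_B(C)$, and form $G=\langle N_A,N_B\rangle\leq A*_CB$. Since $A\cap B=C$ and $C\leq N_A\cap N_B\leq A\cap B$, we have $N_A\cap N_B=C$; the subgroup theorem for amalgamated free products (equivalently, restricting the normal form theorem to alternating words in $N_A\setminus C$ and $N_B\setminus C$) identifies $G$ with $N_A*_CN_B$. Every generator normalizes $C$, so $G\leq N_\pi(C)$; and $C$ is normal in each vertex group of $G$, hence in $G$, yielding
\[
G/C\;\cong\;(N_A/C)*(N_B/C).
\]

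The hypothesis makes $N_\pi(C)$ virtually cyclic (finite or two-ended), so $G$ and $G/C$ are as well. I then invoke the standard fact that a free product $H*K$ of nontrivial groups is virtually cyclic if and only if $|H|=|K|=2$ (giving $D_\infty$); otherwise $H*K$ contains $F_2$ and has infinitely many ends. Consequently either one of $N_A/C$, $N_B/C$ is trivial, or both are of order $2$, yielding the three alternatives in the lemma. The only nontrivial step is this ends-theoretic classification of virtually cyclic free products; everything else is formal bookkeeping around Bass--Serre normal forms.
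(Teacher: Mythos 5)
Your proof is correct and follows the same route the paper intends: the first assertion via the finite image in $\mathrm{Aut}(C)$, and the second via normal forms in the amalgam (the paper's own proof is a one-line appeal to "consideration of normal forms", of which your identification of $\langle N_A(C),N_B(C)\rangle$ with $N_A(C)*_CN_B(C)$ and the ends classification of $(N_A/C)*(N_B/C)$ is a careful implementation). Your write-up in fact supplies more detail than the paper does, and the HNN case is treated at the same (admittedly loose) level of explicitness as in the original.
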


\begin{proof}
The first assertion is clear, since $Aut(C)$ is finite.
The second follows from consideration of normal forms in $A*_CB$ or $A*_C\varphi\leq\pi$.
\end{proof}

\begin{lemma}
\label{loopiso}
If $e$ is an edge such that $G_{o(e)}$ and $G_{t(e)}$ 
are finite nilpotent groups then either $e$ is a loop isomorphism or it is an MC-tie.
In particular, if $G_{o(e)}$ or $G_{t(e)}$ has odd order then $e$ must be a loop isomorphism.
\end{lemma}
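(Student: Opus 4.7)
The plan is to distinguish the non-loop case $u:=o(e)\neq t(e)=:v$ from the loop case $u=v$, and in each case to combine the Normalizer Condition in the finite nilpotent vertex groups with Theorem~\ref{Cinf} and Lemma~\ref{CinN} to constrain $G_e$. In the non-loop case, since $(\mathcal{G},\Gamma)$ is reduced, $G_e$ is a proper subgroup of each of $G_u$ and $G_v$, so the Normalizer Condition yields $\alpha\in N_{G_u}(G_e)\setminus G_e$ and $\beta\in N_{G_v}(G_e)\setminus G_e$. Inside the subgroup $G_u*_{G_e}G_v\leq\pi$ carried by the sub-graph of groups on the single edge $e$, the cyclically reduced word $\alpha\beta$ has infinite order and lies in $N_\pi(G_e)$, so $N_\pi(G_e)$ is infinite. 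Since $(\mathcal{G},\Gamma)$ is indecomposable, $G_e\neq1$; being a subgroup of a nilpotent group, $G_e$ is nilpotent and has nontrivial centre, so I can pick $g\in Z(G_e)$ of prime order. Then $C_\pi(G_e)\subseteq C_\pi(g)$, and $C_\pi(G_e)$ has finite index in $N_\pi(G_e)$ since $\mathrm{Aut}(G_e)$ is finite, so $C_\pi(g)$ is infinite. Theorem~\ref{Cinf} then forces $C_\pi(g)$, and hence $N_\pi(G_e)$, to be virtually $\mathbb{Z}$. Applying Lemma~\ref{CinN} with $A=G_u$, $B=G_v$, $C=G_e$ and ruling out the degenerate options via the Normalizer Condition yields $[N_{G_u}(G_e):G_e]=[N_{G_v}(G_e):G_e]=2$.

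\medskip

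The odd-order addendum follows immediately: if $|G_u|$ or $|G_v|$ is odd, then $[N_{G_u}(G_e):G_e]=2$ contradicts Lagrange, so the non-loop case cannot occur and $e$ must be a loop (treated below). The main technical obstacle I foresee is promoting $[N_{G_u}(G_e):G_e]=2$ to $[G_u:G_e]=2$, since a nilpotent finite group may well contain a subgroup $H$ with $[N_{G_u}(H):H]=2$ while $[G_u:H]>2$ (for instance $\langle s\rangle$ in $D_8$). My plan is to rule this out by decomposing $G_u$ as a direct product of its Sylow subgroups, so that only the $2$-part contributes to the index-$2$ quotient, and then iterating the Normalizer Condition: if $N_{G_u}(G_e)\subsetneq G_u$, then some $\gamma\in N_{G_u}(N_{G_u}(G_e))\setminus N_{G_u}(G_e)$ produces a second index-$2$ subgroup $\gamma G_e\gamma^{-1}$ of $N_{G_u}(G_e)$ distinct from $G_e$. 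Combining such a $\gamma$ with elements from $G_v$ should allow one to build a finite subgroup of $N_\pi(G_e)$ whose image in $N_\pi(G_e)/G_e\cong D_\infty$ contains $(\mathbb{Z}/2)^2$, contradicting the fact that $D_\infty$ has no such finite subgroup.

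\medskip

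For the loop case $u=v$, the goal is to show that $\phi_e:G_e\to G_v$ is surjective. Assume for contradiction that $G_e\subsetneq G_v$; the stable letter $t_e$ conjugates $G_e$ to $\phi_e(G_e)\leq G_v$. If $\phi_e(G_e)=G_e$ as subgroups of $G_v$ then $t_e\in N_\pi(G_e)$ has infinite order, and one runs the analogous Theorem~\ref{Cinf}/Lemma~\ref{CinN} analysis on the HNN extension $G_v*_{G_e,\phi_e}$ to contradict $G_e\subsetneq G_v$. If instead $\phi_e(G_e)\neq G_e$, one combines $\alpha\in N_{G_v}(G_e)\setminus G_e$ (furnished by the Normalizer Condition), an element of $N_{G_v}(\phi_e(G_e))\setminus\phi_e(G_e)$, and the stable letter $t_e$ to produce an infinite-order element normalizing a suitable conjugate of $G_e$, and again derives a contradiction with the two-ended structure forced by Theorem~\ref{Cinf}.
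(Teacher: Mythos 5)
Your reconstruction of the main line is exactly the paper's: the printed proof of Lemma~\ref{loopiso} is the single sentence ``This follows from the normalizer condition, Lemma~\ref{CinN} and Theorem~\ref{Cinf}'', and your chain (the Normalizer Condition supplies $\alpha\in N_{G_u}(G_e)\setminus G_e$ and $\beta\in N_{G_v}(G_e)\setminus G_e$; the cyclically reduced word $\alpha\beta$ has infinite order and normalizes $G_e$; $G_e\neq 1$ by indecomposability; Theorem~\ref{Cinf} together with the finite-index statement in Lemma~\ref{CinN} makes $N_\pi(G_e)$ two-ended; Lemma~\ref{CinN} then forces $[N_{G_u}(G_e):G_e]=[N_{G_v}(G_e):G_e]=2$) is precisely how those three results combine. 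Your deduction of the odd-order addendum from this is also correct, and in the loop case the subcase $\phi_e(G_e)=G_e$ does close cleanly: $\langle N_{G_v}(G_e),t_e\rangle$ is an HNN extension of $N_{G_v}(G_e)$ over $G_e$, which has infinitely many ends unless $N_{G_v}(G_e)=G_e$, whence $G_e=G_v$ by the Normalizer Condition.

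The gap you flag is, however, real, and your proposed repair does not close it. The element $\gamma\in N_{G_u}(N_{G_u}(G_e))\setminus N_{G_u}(G_e)$ satisfies $\gamma G_e\gamma^{-1}\neq G_e$, so $\gamma\notin N_\pi(G_e)$ and cannot contribute to a finite subgroup of $N_\pi(G_e)$; moreover $N_\pi(G_e)/G_e$ need not be $D_\infty$, since $G_e$ need not be the maximal finite normal subgroup of the two-ended group $N_\pi(G_e)$. Thus the promotion of $[N_{G_u}(G_e):G_e]=2$ to $[G_u:G_e]=2$ --- which is what the definition of an MC-tie requires, and which genuinely fails at the level of abstract nilpotent groups ($\langle s\rangle\le D_8$, or $\langle y\rangle\le Q_{16}$) --- remains open in your write-up, as does the analogous step in the loop subcase $\phi_e(G_e)\neq G_e$, where the same analysis again only yields $[N_{G_v}(G_e):G_e]=2$ rather than $G_e=G_v$. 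You should be aware that the paper's own one-line proof does not address this point either; the only invocation of the lemma (in Theorem~\ref{oddorder}) is for vertex groups of odd order, where $[N_{G_u}(G_e):G_e]=2$ is excluded by Lagrange, all branches of Lemma~\ref{CinN} collapse via the Normalizer Condition, and the argument closes without the problematic promotion. Closing the even-order case appears to require input beyond the three cited results --- compare Theorem~\ref{edgegp}(1) and the considerably longer argument of Lemma~\ref{notlooporMC}, which the paper needs even under the stronger no-$D_4$ hypothesis to rule out $[G_w:G_f]>2$.
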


\begin{proof}
This follows from the normalizer condition, Lemma \ref{CinN} and Theorem \ref{Cinf}.
\end{proof}

When  $\pi$ is virtually free the following lemma complements Theorem \ref{Cinf}.

\begin{lemma}
\label{twoends}
If $n$ is even, $\pi$ is virtually free and $g\in\pi^+$ has prime order $p\geq2$ then $N_\pi(\langle{g}\rangle)$ has two ends.
\end{lemma}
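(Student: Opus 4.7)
The plan is to work with the Bass--Serre tree $T$ of a graph-of-finite-groups decomposition of the virtually free group $\pi$ and to show that $N_\pi(\langle g\rangle)$ acts cocompactly on the two-ended fixed subtree $T^{\langle g\rangle}$. Since $g \in \pi^+$ one has $\omega(g) = 1$, and since $n$ is even, Remark 13 of \cite{Cr}, as cited above Theorem \ref{Cinf}, gives $\xi(T^{\langle g\rangle}) = 1$. As $\pi$ is virtually free, every vertex stabilizer in $T$ is finite, so $\infty(T^{\langle g\rangle}) = 0$ and hence $e(T^{\langle g\rangle}) = 2$; thus $T^{\langle g\rangle}$ is an infinite, locally finite subtree of $T$ with exactly two ends.

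Next I would identify the setwise stabilizer of $T^{\langle g\rangle}$ in $\pi$. Let $K$ denote its pointwise stabilizer; then $K$ is finite (being contained in $G_v$ for any $v \in T^{\langle g\rangle}$), $\langle g\rangle \leq K$, and $T^K = T^{\langle g\rangle}$. Since the pointwise stabilizer of $h \cdot T^K$ is $hKh^{-1}$, the setwise stabilizer of $T^{\langle g\rangle}$ equals $N_\pi(K)$. One obtains the chain $C_\pi(K) \leq C_\pi(g) \leq N_\pi(\langle g\rangle) \leq N_\pi(K)$, and the finiteness of $K$ forces $[N_\pi(K) : C_\pi(K)] \leq |\mathrm{Aut}(K)| < \infty$. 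Consequently $N_\pi(\langle g\rangle)$ has finite index in $N_\pi(K)$, so the two groups share their number of ends, and it suffices to show $N_\pi(K)$ has two ends.

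The final step is to verify that $N_\pi(K)$ acts cocompactly on $T^{\langle g\rangle}$ with finite vertex stabilizers, and then to invoke the standard fact that a group acting cocompactly on a locally finite tree with finite vertex stabilizers has the same number of ends as the tree. The vertex stabilizers are subgroups of the finite $G_v$. For cocompactness, observe that the vertices of $T^{\langle g\rangle}$ in a fixed $\pi$-orbit $\pi v$ correspond to cosets $hG_v$ with $h^{-1}Kh \leq G_v$, and two such cosets lie in the same $N_\pi(K)$-orbit precisely when $h_1^{-1}Kh_1$ and $h_2^{-1}Kh_2$ are $G_v$-conjugate in $G_v$. Since $G_v$ is finite this yields finitely many $N_\pi(K)$-orbits over each of the finitely many vertices of $\Gamma = \pi\backslash T$, and the analogous statement for edges completes the cocompactness argument. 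The main delicacy is disentangling $N_\pi(\langle g\rangle)$ from the possibly larger setwise stabilizer $N_\pi(K)$, but the finite-index chain above handles this cleanly.
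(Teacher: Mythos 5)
Your proof is correct, but it follows a genuinely different route from the paper's. The paper passes to the finite-index subgroup $FC$ (where $F$ is free, normal and of finite index, and $C=\langle{g}\rangle$) and then to the indecomposable free factor $\rho$ of $FC$ containing $C$; since $p$ is prime and the graph of groups for $\rho$ is reduced and indecomposable, every edge is a loop isomorphism at a single vertex, so $\rho\cong{C}\rtimes{F(r)}$ with $C$ normal. The case $r=0$ is excluded because $\pi$ finite would force $\widetilde{X}\simeq{S^n}$ with $n$ even and hence $\pi^+=1$, contradicting $g\in\pi^+$; thus $N_\pi(C)$ contains the infinite group $\rho$, and Theorem \ref{Cinf} together with Lemma \ref{CinN} upgrades ``infinite'' to ``two ends''. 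You instead feed $\omega(g)=1$ and the parity of $n$ directly into Remark 13 of \cite{Cr} to get $\xi(T^{\langle{g}\rangle})=1$, hence (all vertex stabilizers being finite, as $\pi$ is virtually free) a nonempty, locally finite, two-ended fixed tree, and then show that $N_\pi(\langle{g}\rangle)$ is commensurable with the setwise stabilizer $N_\pi(K)$ of that tree, which acts cocompactly with finite stabilizers and is therefore quasi-isometric to it. Your chain $C_\pi(K)\leq{N_\pi(\langle{g}\rangle)}\leq{N_\pi(K)}$, the identification of the setwise stabilizer with $N_\pi(K)$ via $h\cdot{T^K}=T^{hKh^{-1}}$, and the count of $N_\pi(K)$-orbits by $G_v$-conjugacy classes of the subgroups $h^{-1}Kh$ are all sound. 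The paper's route is shorter because it delegates the endgame to Theorem \ref{Cinf} and an Euler-characteristic computation; yours concentrates the Poincar\'e-duality input in the single citation of Remark 13, obtains infiniteness and two-endedness in one stroke rather than via ``infinite plus Theorem \ref{Cinf}'', and avoids the passage to $FC$ and its indecomposable factor (which implicitly relies on Theorem B to know that $\rho$ is again realized by a $PD_n$-complex with $(n-2)$-connected universal cover).
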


\begin{proof}
Let $F$ be a free normal subgroup of finite index in $\pi$,
and let $\rho$ be the  indecomposable factor of $FC$ containing $C$.
Then $N_\rho(C)=N_{FC}(C)$,
and so has finite index in $N_\pi(C)$, 
since $FC$ has finite index in $\pi$.
Thus we may assume that $\pi=\rho$.
Since $p$ is prime, the nontrivial edge stabilizers for the action of  $C=\langle{g}\rangle$
on the terminal $\pi$-tree $T$ are isomorphic to $C$.
Hence $\Gamma$ has just one vertex and all the edges
are loop isomorphisms, so $\pi$ is a semidirect product $C\rtimes{F(r)}$, with $r\geq0$.
Clearly $C$ is normal in this group.
If $r=0$ then $\pi$ is finite and so $\widetilde{X}\simeq{S^n}$.
But then $|\pi|\chi(X)=\chi(S^n)=2$, and $\pi^+=1$, contrary to hypothesis.
Therefore $r>0$, and so $N_\pi(C)$ is infinite,
since it contains $C\rtimes{F(r)}$.
Hence $N_\pi(C)$  has two ends, 
by Theorem \ref{Cinf} and Lemma \ref{CinN}.
\end{proof}

\section{Other Consequences of the Chiswell Sequence and Poincar\'e Duality}\label{section:otherconsequences}

We shall assume henceforth that $n$ is even.
If $\pi$ is finite then $\widetilde{X}\simeq{S^n}$, and so  $|\pi|\leq2$.
Hence $X\simeq{S^n}$ or ${R}P^n$.
If $\pi$ has one end then $\widetilde{X}$ is contractible,
and so $X$ is aspherical.
Hence we may also assume that $\pi$ has more than one end.

While our main concerns  shall be with the case when all vertex groups are finite,
elementary considerations give some complementary results.

\begin{lemma}
\label{HChis}
Let $\pi=\pi\mathcal{G}$ be as above, and let $g\in\pi$ have order $q$ and $\omega(g)=1$.
Then there is an exact sequence
\[
0\to\oplus_{v\in{V_f}}H_1(\langle{g}\rangle;{^\omega(}\mathbb{Z}[G_v\backslash\pi]))\to
\oplus_{e\in{E}}H_1(\langle{g}\rangle; {^\omega(}\mathbb{Z}[G_e\backslash\pi]))
\to{\mathbb{Z}/q\mathbb{Z}}\to0.
\]
\end{lemma}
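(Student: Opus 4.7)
My plan is to derive the claimed short exact sequence from the long exact sequence of $\langle g\rangle$-homology applied to the Chiswell short exact sequence displayed above (after twisting by $\omega$), using the Poincar\'e duality--devissage isomorphism $H_s(\langle g\rangle;\,{}^\omega H^1(\pi;\mathbb{Z}[\pi]))\cong H_{s+n}(\langle g\rangle;\mathbb{Z})$ (valid for $s\geq 1$, as cited from Lemma~2.10 of \cite{Hi}) to identify the terms involving $H^1(\pi;\mathbb{Z}[\pi])$.

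Writing $A=\bigoplus_{v\in V_f}\mathbb{Z}[G_v\backslash\pi]$ and $B=\bigoplus_{e\in E}\mathbb{Z}[G_e\backslash\pi]$, I first take the $\omega$-twisted Chiswell sequence $0\to{}^\omega A\to{}^\omega B\to{}^\omega H^1(\pi;\mathbb{Z}[\pi])\to 0$ of left $\mathbb{Z}[\pi]$-modules and extract from the resulting long exact sequence the segment
\[
H_2(\langle g\rangle;\,{}^\omega H^1(\pi;\mathbb{Z}[\pi]))\to H_1(\langle g\rangle;\,{}^\omega A)\to H_1(\langle g\rangle;\,{}^\omega B)\to H_1(\langle g\rangle;\,{}^\omega H^1(\pi;\mathbb{Z}[\pi]))\stackrel{\delta}{\to}H_0(\langle g\rangle;\,{}^\omega A).
\]
Since $\omega(g)=1$, the character $\omega$ restricts trivially to $\langle g\rangle$, and the cited isomorphism identifies the two $H^1(\pi;\mathbb{Z}[\pi])$-coefficient groups with ordinary homology of $\langle g\rangle\cong\mathbb{Z}/q\mathbb{Z}$ with trivial coefficients. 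Using $H_{2k+1}(\mathbb{Z}/q\mathbb{Z};\mathbb{Z})\cong\mathbb{Z}/q\mathbb{Z}$ and $H_{2k}(\mathbb{Z}/q\mathbb{Z};\mathbb{Z})=0$ for $k\geq 1$, together with the fact that $n$ is even, this yields $H_1(\langle g\rangle;\,{}^\omega H^1(\pi;\mathbb{Z}[\pi]))\cong\mathbb{Z}/q\mathbb{Z}$ and $H_2(\langle g\rangle;\,{}^\omega H^1(\pi;\mathbb{Z}[\pi]))=0$, producing the $\mathbb{Z}/q\mathbb{Z}$ on the right of the target sequence and the injectivity at the left.

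The remaining step --- which I expect to be the most delicate point --- is to verify that the connecting map $\delta$ vanishes. My plan is to finish this via a torsion/torsion-free dichotomy. Since $\omega|_{\langle g\rangle}$ is trivial, the conjugate left action of $\langle g\rangle$ on ${}^\omega\mathbb{Z}[G_v\backslash\pi]$ reduces to $g^k\cdot(G_vh)=G_vhg^{-k}$, so passing to coinvariants identifies $H_0(\langle g\rangle;\,{}^\omega\mathbb{Z}[G_v\backslash\pi])$ with the free abelian group $\mathbb{Z}[G_v\backslash\pi/\langle g\rangle]$ on double cosets. Summing over $v\in V_f$, the target $H_0(\langle g\rangle;\,{}^\omega A)\cong\bigoplus_{v\in V_f}\mathbb{Z}[G_v\backslash\pi/\langle g\rangle]$ is therefore torsion-free, so every homomorphism from the torsion group $\mathbb{Z}/q\mathbb{Z}$ into it is zero. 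Hence $\delta=0$ automatically, and the long exact sequence truncates to the required three-term exact sequence. In summary, the substantive content is entirely contained in the cited Poincar\'e-duality identification; the rest is bookkeeping on the parity of $n$ and the observation that $H_0(\langle g\rangle;\,{}^\omega A)$ is torsion-free.
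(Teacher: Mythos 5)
Your argument is correct and is essentially the proof the paper gives: both apply the $\langle g\rangle$-homology long exact sequence to the conjugated Chiswell sequence, use the cited devissage isomorphism $H_s(\langle g\rangle;{}^\omega H^1(\pi;\mathbb{Z}[\pi]))\cong H_{s+n}(\langle g\rangle;\mathbb{Z})$ with $n$ even to get $\mathbb{Z}/q\mathbb{Z}$ in degree $1$ and $0$ in degree $2$, and kill the connecting map by observing that $H_0(\langle g\rangle;{}^\omega\mathbb{Z}[G_v\backslash\pi])$ is free abelian because it is the coinvariants of a permutation module on which $\omega(g)=1$. Your double-coset description of that $H_0$ just spells out the paper's one-line justification.
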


\begin{proof}
Since $\mathbb{Z}[G_v\backslash\pi]$ is a permutation $\mathbb{Z}[\langle{g}\rangle]$-module and $\omega(g)=1$,
 $H_0(\langle{g}\rangle; {^\omega(}\mathbb{Z}[G_v\backslash\pi]))$ is a free abelian group, for all $v\in{V_f}$.
Since $H_1(\langle{g}\rangle; {^\omega{H}}^1(\pi;\mathbb{Z}[\pi]))\cong\langle{g}\rangle\cong{\mathbb{Z}/q\mathbb{Z}}$ 
and $H_2(\langle{g}\rangle; {^\omega{H}}^1(\pi;\mathbb{Z}[\pi]))=0$, by Lemma 2.10 of \cite{Hi}, 
the result follows from the long exact sequence of homology for $\langle{g}\rangle$ 
associated to the short exact sequence of left $\mathbb{Z}[\pi]$-modules obtained by
conjugating the Chiswell sequence. 
\end{proof}

The result holds also if $\omega(g)=-1$ and no odd power of $g$ is conjugate into a finite vertex group.
Otherwise the righthand term of the short exact sequence may be $\mathbb{Z}/q'\mathbb{Z}$,
where $q'=q$ or $\frac12q$.
However we shall not need to consider the orientation-reversing case more closely.

\begin{theorem}
\label{edgegp}
Let $\pi=\pi\mathcal{G}$ be as above, and let $g\in\pi$ have order $q>1$. Then
\begin{enumerate}
\item 
If  $q=p^r$, for some prime $p$ and $r\geq1$,
and $\omega(g)=1$ then $g$ is conjugate into an edge group.
If $g$ is in a finite vertex group ${G_v}$ then $g$ is conjugate into $G_e$ for some edge $e$ with $v\in\{o(e),t(e)\}$.

\item
Let $g\in{G_e}$, where $e$ is an edge such that $G_{o(e)}$ and $G_{t(e)}$ 
each have one end, and suppose that $\omega(g)=1$.
If $xgx^{-1}\in{G_{e'}}$ for some $x\in\pi$ and edge $e'$ such that $G_{o(e')}$ and $G_{t(e')}$ 
each have one end then $x\in{G_e}$.
Hence $N_\pi(G_e)=G_e$.

\item
If $G_v$ has one end for all $v\in{V}$ and $g\in\pi$ has finite order then $\omega(g)=1$.
\end{enumerate}
\end{theorem}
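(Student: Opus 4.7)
The plan is to use Lemma \ref{HChis} (and its $\omega$-twisted version for part (3), whose validity here follows from the remark after the lemma) together with Shapiro's lemma applied to the action of $C=\langle g\rangle$ on the terminal tree $T$. This identifies $H_1(C;{}^\omega\mathbb{Z}[G_\ast\backslash\pi])$ with $\bigoplus_{\tilde\ast}H_1(C\cap G_{\tilde\ast};{}^\omega\mathbb{Z})$, where $\tilde\ast$ runs over representatives of the $C$-orbits on the preimage of $\ast$ in $T$.

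For (1), $\omega|_C$ is trivial, so each summand equals the cyclic $p$-group $C\cap G_{\tilde\ast}$, of order dividing $p^r$. A direct sum of proper subgroups of $C\cong\mathbb{Z}/p^r\mathbb{Z}$ has exponent at most $p^{r-1}$ and so cannot surject onto the cokernel $\mathbb{Z}/p^r\mathbb{Z}$ of Lemma \ref{HChis}; hence some summand $C\cap G_{\tilde e}$ equals $C$, giving $g\in G_{\tilde e}$, and thereby that $g$ is conjugate into $G_e$. If in addition $g\in G_v$, then $g$ fixes the distinguished lift $\tilde v$, and $T^C$ is a subtree containing both $\tilde v$ and $\tilde e$; its unique reduced path between these must begin with an edge incident to $\tilde v$, whose image in $\Gamma$ is incident to $v$.

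For (2), the key observation is that the connecting map $\Delta$ in Lemma \ref{HChis} uses vertex-edge incidences in $\Gamma$, so the image of $\bigoplus_{v\in V_f}H_1$ in the middle term is supported on summands corresponding to edges $\tilde f$ of $T$ at least one of whose endpoints has finite stabilizer. The summands corresponding to edges with both endpoints having infinite $\pi$-stabilizer therefore lie outside this image and inject into the cokernel $\mathbb{Z}/q\mathbb{Z}$. Both $\tilde e$ and $x^{-1}\tilde e'$ are such edges, since $e$ and $e'$ have one-ended vertex groups, and each contributes a full summand $C\cap G_{\tilde f}=C\cong\mathbb{Z}/q\mathbb{Z}$ because $g$ fixes both edges. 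If $\tilde e$ and $x^{-1}\tilde e'$ were distinct, the cokernel $\mathbb{Z}/q\mathbb{Z}$ would contain $\mathbb{Z}/q\mathbb{Z}\oplus\mathbb{Z}/q\mathbb{Z}$, impossible in a cyclic group. Hence $\tilde e=x^{-1}\tilde e'$, so $x\tilde e=\tilde e'$; projecting to $\Gamma$ forces $e=e'$, and both being the distinguished lift of the common edge yields $\tilde e=\tilde e'$, whence $x\tilde e=\tilde e$ and so $x\in G_{\tilde e}=G_e$. The equality $N_\pi(G_e)=G_e$ follows on specializing to $e'=e$ and letting $g$ range over the nontrivial elements of $G_e$.

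For (3), with $V_f=\emptyset$ the left-hand term of Lemma \ref{HChis} vanishes, giving $\bigoplus_{\tilde e}H_1(C\cap G_{\tilde e};{}^\omega\mathbb{Z})\cong\mathbb{Z}/q\mathbb{Z}$ in both signs of $\omega$ (Lemma 2.10 of \cite{Hi} yielding the right-hand side independently of this sign). Suppose for contradiction $\omega(g)=-1$; then $q$ is even. A direct computation with the periodic resolution of any cyclic subgroup $S\leq C$ shows $H_1(S;{}^\omega\mathbb{Z})=S$ when $\omega|_S$ is trivial, which forces $S\leq\ker(\omega|_C)=\langle g^2\rangle$ and thus $|S|\mid q/2$, and vanishes otherwise. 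The left-hand side is then a direct sum of cyclic groups of exponent at most $q/2$, which cannot be isomorphic to the cyclic group $\mathbb{Z}/q\mathbb{Z}$. This contradiction gives $\omega(g)=1$.
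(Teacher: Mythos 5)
Your proposal is correct and follows essentially the same route as the paper: the long exact homology sequence of the Chiswell sequence (Lemma \ref{HChis}), the exponent argument for prime-power order in (1), the observation in (2) that two distinct $\langle g\rangle$-fixed edges with one-ended endpoint stabilizers would contribute $(\mathbb{Z}/q\mathbb{Z})^2$ to a cokernel that must embed in $\mathbb{Z}/q\mathbb{Z}$, and the exponent-dividing-$q/2$ computation in (3). The only cosmetic difference is in the second claim of (1), where you argue via the geodesic in the fixed subtree $T^{\langle g\rangle}$ rather than, as the paper does, via the injectivity of the map induced by $\Delta$; both are valid.
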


\begin{proof}
If $g$ has order $p^r$, for some prime $p$ and $\omega(g)=1$ then 
$H_1(\langle{g}\rangle; {^\omega(}\mathbb{Z}[G_e\backslash\pi]))\cong{\mathbb{Z}/p^r\mathbb{Z}}$ 
for at least one edge $e$, by Lemma \ref{HChis},
for otherwise $\oplus_{e\in{E}}H_1(\langle{g}\rangle; {^\omega(}\mathbb{Z}[G_e\backslash\pi]))$ 
has exponent dividing $p^{r-1}$.
Therefore $g$ must fix some coset $G_ex$,
and so $xgx^{-1}\leq{G_e}$.
If $g\in{G_v}$ but is not conjugate into $G_e$ for any edge $e$ with $v\in\{o(e),t(e)\}$ then
the map from $H_1(\langle{g}\rangle; {^\omega(}\mathbb{Z}[G_v\backslash\pi]))$ to
$\oplus_{e\in{E}}H_1(\langle{g}\rangle;{^\omega(}\mathbb{Z}[G_e\backslash\pi]))$
has nontrivial kernel.

If $xgx^{-1}\in{G_e'}$ for some $x\not\in{G_e}$ and 
edge $e'$ with both adjacent vertex groups having one end 
then $H^1(\pi;\mathbb{Z}[\pi])$ has more than one copy 
of the augmentation $\mathbb{Z}[\langle{g}\rangle]$-module $\mathbb{Z}$ 
as a direct summand.
But then $H_1(\langle{g}\rangle;{^\omega{H}}^1(\pi;\mathbb{Z}[\pi]))$ 
would have at least two copies of $\mathbb{Z}/q\mathbb{Z}$ as direct summands,
which would contradict Lemma 2.10 of \cite{Hi}.

If all vertex groups have one end the Chiswell sequence 
reduces to an isomorphism
$H^1(\pi;\mathbb{Z}[\pi])\cong\oplus_{e\in{E}}\mathbb{Z}[G_e\backslash\pi]$.
If $g$ has order $2k$ and $\omega(g)=-1$ then $\omega(xgx^{-1})=-1$ for all $x\in\pi$,
and so $H_1(\langle{g}\rangle;{^\omega(}\mathbb{Z}[G_e\backslash\pi]))$
has exponent dividing $k$, for all edges $e\in{E}$.
Hence $H_1(\langle{g}\rangle; {^\omega{H}}^1(\pi;\mathbb{Z}[\pi]))$
has exponent dividing $k$.
This contradicts Lemma 2.10 of \cite{Hi}.
\end{proof}

There are easy counterexamples to part (1)
if $n$ is odd or if $\omega(g)=-1$.
Using Lemma \ref{twoends}, it can be shown that (1) holds if $q$ is odd.
(We do not need to know this below.)
However, it does not always hold when $q$ is even.
The simplest counter-example is  given by the fundamental group of the double of the nontrivial $I$-bundle 
over the lens space $L(6,1)$, which is an amalgam of two copies of $\mathbb{Z}/6\mathbb{Z}$
over $\mathbb{Z}/3\mathbb{Z}$.

If $\pi\cong{N}\rtimes\mathbb{Z}/p\mathbb{Z}$, 
where $N$ is torsion free and $p$ is an odd prime, 
then all edge groups are ${\mathbb{Z}/p\mathbb{Z}}$.
Since $(\mathcal{G},\Gamma)$ is reduced and indecomposable, 
either all vertex groups have one end or $\Gamma$ has just one vertex 
and $\pi\cong\mathbb{Z}\oplus{\mathbb{Z}/p\mathbb{Z}}$ (by Lemma \ref{twoends}).

\noindent{\bf Example.}  
Let $n\geq4$ be even, and let $M$ be an orientable $n$-manifold 
such that $\widetilde{M}$ is $(n-2)$-connected.
Suppose that $M$ has a self-homeomorphism $g$ of prime order $p$ 
and with nonempty finite fixed point set.
Then $g$ is orientation-preserving, since $n$ is even.
Let $s$ be the number of fixed points, and let $U=M\setminus{N}$, 
where $N$ is a $\langle{g}\rangle$-invariant
regular neighbourhood of the fixed point set.
Then $\mu=\pi_1(U)\cong\pi_1(M)$, since $n>2$.
Let $V=U/\langle{g}\rangle$ and $X=D(V)=V\cup_{\partial{V}}V$.
Then $\widetilde{X}$ is $(n-2)$-connected, by a Mayer-Vietoris argument,
and $\pi=\pi_1(X)\cong(\mu*\mu*F(s-1))\rtimes{\mathbb{Z}/p\mathbb{Z}}$.
If $\mu$ has one end then $\pi_1(X)\cong\pi\mathcal{G}$,
where $(\mathcal{G},\Gamma)$ is a graph of groups,
with $\Gamma$ having two vertices and $s$ edges, both vertex groups $\mu\rtimes\mathbb{Z}/p\mathbb{Z}$
and all edge groups $\mathbb{Z}/p\mathbb{Z}$.

This construction can be generalized,
by starting with a finite group $G$ which acts semifreely and with finite fixed point set on 
one or several $(n-2)$-connected closed $n$-manifolds.
After deleting regular neighbourhoods of the fixed points,
we may hope to assemble the pieces along pairs of boundary components 
with equivalent $G$-actions.
Note that $G$ must have cohomological period dividing $n$,
since it acts freely on the boundary spheres.
The analogous construction in the odd-dimensional cases 
gives only non-orientable examples (with $G$ of order 2).

Explicitly: the 4-dimensional torus $T^4=\mathbb{R}^4/\mathbb{Z}^4$ 
has such self-maps, of orders 2, 3, 4, 5, 6 and 8.
(It also has a semifree action of $Q(8)$ with finite fixed point set.) 
The group $\mathbb{Z}/k\mathbb{Z}$ acts semifreely, with two fixed points,
on $T_k$, the closed orientable surface of genus $k$.
The corresponding diagonal action on $T_k\times{T_k}$ is semifree, with four fixed points.
Similarly, $S^1\times{S^3}$ has an orientation-preserving involution with four fixed points.
(In the latter case doubling the complement of the fixed point set gives a virtually free group 
which is the free product of three two-ended factors.)

If $p$ is prime, a locally smoothable $\mathbb{Z}/p\mathbb{Z}$-action 
on a closed manifold which is orientable over $\mathbb{F}_p$
cannot have exactly one fixed point.
(See Corollary IV.2.3 of \cite{Bre}.)
Thus the above construction always leads to groups of the form $\pi\cong{G}\rtimes{\mathbb{Z}/p\mathbb{Z}}$, 
where $G$ has a nontrivial free factor.
Is there an example with $\pi$ indecomposable and virtually a free product of $\mathrm{PD}_n$-groups?

If $\pi$ is virtually torsion free must the edge groups have cohomological period dividing $n$?
In general, must $\pi$ be virtually torsion-free?
We suspect no, but have no counter-examples.

\section{Virtually Free Fundamental Group}
\label{section:virtuallyfree}

We shall now restrict further to the class of (infinite) virtually free groups.
The known indecomposable examples among manifolds with such groups
are mapping tori of self-homeomorphisms of $(n-1)$-dimensional spherical space forms
and unions of mapping cylinders of double coverings of two such space forms
(twisted $I$-bundles) with homeomorphic boundary.
The fundamental groups have two ends, 
and graph of group structures with just one edge, 
which is a loop isomorphism or an MC-tie, respectively.
(The examples involving mapping cylinders suggested the latter term.)
There are similar constructions involving PD$_{n-1}$-complexes with universal cover
$\simeq{S^{n-1}}$.

Our goal is to show that these examples are essentially all, 
provided that $\pi$ has no dihedral subgroup of order $>2$.
There are examples with dihedral subgroups and two ends,
and there may still be indecomposable examples with infinitely many ends.
(See Section \ref{section:construct} below.)

\begin{lemma}
\label{noisolates}
Let $H$ be a nontrivial subgroup of $G_v\cap\pi^+$.
Then there is an edge $e$ with $v$ as a vertex and such that $G_e\cap{H}\not=1$.
\end{lemma}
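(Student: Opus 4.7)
The plan is to find a nontrivial element of $H$ inside an edge group adjacent to $v$ by studying the fixed subtree of a prime-order element acting on the Bass-Serre tree $T$ associated with $(\mathcal{G},\Gamma)$.

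First, I reduce to an element of prime order. Every subgroup of the virtually free group $\pi$ is itself virtually free, and no virtually free group has exactly one end, so every vertex group of the reduced indecomposable decomposition $(\mathcal{G},\Gamma)$ is finite. Hence $H\leq G_v$ is a nontrivial finite group and I can pick $g\in H$ of prime order $p$. Since $g\in\pi^+$, I have $\omega(g)=1$.

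Next, I apply Remark 13 of \cite{Cr}, as recalled in Section \ref{section:crisp}: for $n$ even, $g$ of prime order and $\omega(g)=1$, the fixed subtree satisfies
\[
\xi(T^{\langle g\rangle})=e(T^{\langle g\rangle})+\infty(T^{\langle g\rangle})-1=1.
\]
Because $\pi$ is virtually free, no vertex of $T$ has infinite stabiliser, so $\infty(T^{\langle g\rangle})=0$ and hence $e(T^{\langle g\rangle})=2$. In particular $T^{\langle g\rangle}$ is a connected subtree of $T$ containing more than one vertex.

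Finally, I read off the required edge. The vertex $\tilde v$ of $T$ lying above $v$ is fixed by $g\in G_v$, so $\tilde v\in T^{\langle g\rangle}$; by connectedness, $\tilde v$ has a neighbour $\tilde w$ in $T^{\langle g\rangle}$, and the edge $\tilde e$ joining them is then fixed by $g$. Its image $e$ in $\Gamma$ has $v$ as a vertex, and the $\pi$-stabiliser of $\tilde e$ is the edge subgroup $G_e$ embedded in $G_v$ in the standard Bass-Serre fashion. Hence $1\neq g\in G_e\cap H$, which is the required nontriviality. The only step needing any care is this routine identification of edge stabilisers in $T$ with the abstract edge groups $G_e$; it is not a genuine obstacle, since once $\xi(T^{\langle g\rangle})=1$ is in hand the existence of a fixed edge at $\tilde v$ is forced by connectedness.
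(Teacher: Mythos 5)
Your proof is correct in substance but follows a genuinely different route from the paper's. The paper argues by contradiction through Theorem B: if $H$ met no adjacent edge group, then in the induced decomposition of the orientable finite-index subgroup $FH$ (with $F$ a free normal subgroup of finite index in $\pi^+$) the vertex group $H$ would have only trivial adjacent edge groups, hence would split off as a free factor of $FH$; Theorem B would then realize the nontrivial finite group $H$ as the fundamental group of an orientable $\mathrm{PD}_n$-complex with $(n-2)$-connected universal cover, which is impossible since $n$ is even. You instead argue directly on the tree: a prime-order $g\in{H}$ with $\omega(g)=1$ has $\xi(T^{\langle{g}\rangle})=1$ by the quoted Remark 13 of \cite{Cr}, and since all vertex stabilizers are finite the fixed subtree has two ends, hence contains an edge at the lift $\tilde{v}$. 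Both arguments ultimately rest on even-dimensional Poincar\'e duality --- yours through the Chiswell-sequence computation behind Remark 13, the paper's through the obstruction to nontrivial finite fundamental groups --- and yours in fact delivers the sharper form actually invoked later (a prescribed prime-order element of $H$ lands in an adjacent edge group), while the paper's is shorter given that Theorem B is already available. One caveat on your final step, which you dismiss as routine: the $\pi$-stabilizer of an edge of $T$ incident to $\tilde{v}$ is a conjugate $aG_ea^{-1}$ with $a\in{G_v}$, not necessarily the distinguished copy of $G_e$ inside $G_v$, so strictly you obtain $aG_ea^{-1}\cap{H}\neq1$ for some $a\in{G_v}$. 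This is harmless here --- every later application concerns a subgroup that is characteristic in $G_v$ (unique of its order, or generated by a central element), and the paper's own proof makes the same silent identification when reading off the edge groups of the induced decomposition of $FH$ --- but it deserves a sentence rather than a wave of the hand.
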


\begin{proof}
Let $F$ be a free normal subgroup of finite index in $\pi^+$.
Then $FH$ is the fundamental group of a finite orientable cover of $X$.
If $G_e\cap{H}=1$, for all edges $e$ with $v$ as a vertex, 
then the induced graph of groups structure for $FH$ 
has a vertex group $H$ with all adjacent edge groups trivial,
and so $H$ is a free factor of $FH$.
Therefore $H$ is the fundamental group of an orientable 
$\mathrm{PD}_n$-complex with $(n-2)$-connected universal cover, 
by Theorem B.
But this is impossible, since $n$ is even and $H\not=1$.
\end{proof}

\begin{theorem}
\label{oddorder}
If $\pi$ is virtually free then subgroups of $\pi$ of odd order are metacyclic.
\end{theorem}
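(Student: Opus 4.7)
The plan is to reduce the theorem to the claim that $\pi$ contains no subgroup isomorphic to $A\cong(\mathbb{Z}/p\mathbb{Z})^2$ for any odd prime $p$. For this reduction I use two classical facts: for $p$ odd, a non-cyclic finite $p$-group contains $(\mathbb{Z}/p\mathbb{Z})^2$ (since a $p$-group with a unique subgroup of order $p$ is cyclic when $p$ is odd); and a finite group of odd order with every Sylow subgroup cyclic is metacyclic (H\"older's classification of Z-groups). Granting the nonexistence of such an $A$, every Sylow subgroup of a finite odd-order $H\leq\pi$ is cyclic, so $H$ is metacyclic.

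Suppose for contradiction that $A\cong(\mathbb{Z}/p\mathbb{Z})^2\leq\pi$. Since $\pi$ is virtually free, every finite subgroup is conjugate into a vertex group of the graph-of-finite-groups decomposition, so I may assume $A\leq{G_v}$ for some vertex $v$. Because $p$ is odd, $\omega(a)^p=1$ in $\mathbb{Z}/2\mathbb{Z}$ forces $\omega(a)=1$ for every $a\in{A}$, i.e., $A\leq\pi^+$. For each of the $p+1$ cyclic subgroups $\langle{a}\rangle\leq{A}$ of order $p$, Lemma \ref{twoends} gives that $N_\pi(\langle{a}\rangle)$ is two-ended, hence so is its finite-index subgroup $C_\pi(a)$; and $A\leq{C_\pi(a)}$ since $A$ is abelian.

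The geometric heart of the argument is that each $C_\pi(a)$, being infinite, cannot fix any vertex of the terminal $\pi$-tree $T$ (else it would lie in a finite vertex group), so as a two-ended group acting on $T$ it preserves a unique axis $L_a\subseteq{T^{\langle{a}\rangle}}$. The finite $p$-group $A$ acts on $L_a$ through $\mathrm{Aut}(L_a)\cong{D_\infty}$, which has no nontrivial element of odd order, so $A$ must fix $L_a$ pointwise. Hence $L_a\subseteq{T^A}$, and in particular the $A$-fixed subtree $T^A$ is infinite and contains a distinguished axis for each of the $p+1$ order-$p$ subgroups of $A$.

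The hard part is converting this geometric abundance into a numerical contradiction with the finiteness of $\Gamma=\pi\backslash{T}$. My plan is to apply the Chiswell exact sequence (Lemma \ref{HChis}) to each order-$p$ subgroup $\langle{a}\rangle\leq{A}$: since $\omega(a)=1$, the module $\mathbb{Z}[G_v\backslash\pi]$ decomposes as a permutation $\langle{a}\rangle$-module whose fixed cosets are indexed by the $a$-fixed vertices in the $\pi$-orbit of $v$, yielding for each such $a$ an exact sequence
\[
0\to\bigoplus_{w\in{V(T^{\langle{a}\rangle})}}\mathbb{F}_p\to\bigoplus_{e\in{E(T^{\langle{a}\rangle})}}\mathbb{F}_p\to\mathbb{F}_p\to0.
\]
Using Lemma \ref{noisolates} at vertices of $T^A$ to control which edges of $T^{\langle{a}\rangle}$ carry a stabilizer equal to $\langle{a}\rangle$ versus all of $A$, and then bookkeeping the $p+1$ exact sequences together, I expect the numerical constraints — reinforced by the coexistence of the $p+1$ distinct axes $L_a$ inside the single subtree $T^A$ and the finiteness of $\Gamma$ — to overdetermine the situation and force a contradiction. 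This combinatorial orchestration is where the real work lies, and constitutes the main obstacle to the argument.
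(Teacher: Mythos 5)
Your reduction of the theorem to excluding subgroups $A\cong(\mathbb{Z}/p\mathbb{Z})^2$ for odd primes $p$ is sound, and your preliminary observations are all correct: $A\leq\pi^+$, each $C_\pi(a)$ is two-ended (via Lemma \ref{twoends} and Lemma \ref{CinN}) and contains $A$, and $A$ fixes the relevant axes pointwise. But the proof stops exactly where it needs to start: you concede that the ``combinatorial orchestration'' producing the contradiction is not carried out, and in fact no amount of bookkeeping with the Chiswell sequences of the cyclic subgroups $\langle{a}\rangle$ can finish the job, because the configuration you are trying to rule out is perfectly consistent as abstract group theory. The group $(\mathbb{Z}/p\mathbb{Z})^2\times\mathbb{Z}$ is virtually free, indecomposable and two-ended; every order-$p$ element has two-ended centralizer containing $A$; all these centralizers preserve the single axis of the Bass--Serre tree, which $A$ fixes pointwise; and the exact sequence of Lemma \ref{HChis} for each of the $p+1$ cyclic subgroups is satisfied (it reduces to $0\to\mathbb{F}_p[\mathbb{Z}]\xrightarrow{1-t}\mathbb{F}_p[\mathbb{Z}]\to\mathbb{F}_p\to0$). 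What kills this group as a candidate for $\pi_1(X)$ is Poincar\'e duality, not tree combinatorics: once the relevant group is two-ended and of finite index, $\widetilde{X}\simeq{S^{n-1}}$, so its finite subgroups act freely on a homotopy sphere and must have periodic cohomology, which $(\mathbb{Z}/p\mathbb{Z})^2$ does not. Your outline never brings this input to bear at the decisive moment; Lemma \ref{twoends} uses it only to obtain two-endedness of normalizers, which, as the example shows, is not enough.

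The paper's proof supplies exactly this missing step, with no axis analysis. For $S$ a Sylow subgroup of a given odd-order subgroup, it passes to $FS$ ($F$ a free normal subgroup of finite index) and then to the indecomposable free factor containing $S$; by Theorem B this factor is again the fundamental group of a $PD_n$-complex with $(n-2)$-connected universal cover. Since $S$ is nilpotent of odd order, Lemma \ref{loopiso} forces every edge of the reduced graph of groups to be a loop isomorphism, and Lemma \ref{1loopiso} (equivalently, Theorem \ref{Cinf}) then leaves a single vertex and a single edge, so the factor is $S\rtimes\mathbb{Z}$ and has two ends. Hence the universal cover is $\simeq{S^{n-1}}$, $S$ has periodic cohomology, and a nilpotent group of odd order with periodic cohomology is cyclic; metacyclicity then follows from Proposition 10.1.10 of Robinson, as in your reduction. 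If you wish to salvage your own framework, the repair is to route the contradiction through this mechanism --- pass to an indecomposable factor of $FA$, observe that it is $A\rtimes\mathbb{Z}$ and two-ended, and conclude that $A$ would act freely on a homotopy $(n-1)$-sphere --- rather than through the Chiswell sequences of the cyclic subgroups of $A$.
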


\begin{proof}
Let $F$ be a free normal subgroup of finite index in $\pi$
and let $p:\pi\to\pi/F$ be the natural epimorphism.
If $S$ is a finite subgroup of $\pi$ then $FS=p^{-1}p(S)\cong{F}\rtimes{S}$.
On replacing $FS$ by an indecomposable factor, if necessary, we may assume that  $FS\cong\pi\mathcal{G}_S$
where $(\mathcal{G}_S,\Gamma_S)$ is an indecomposable reduced finite graph of groups, 
with vertex groups isomorphic to subgroups of $S$, and with at least one edge, since $|S|>2$.

Suppose first that $S$ is nilpotent, of odd order. 
Then $\Gamma_S$ has just one vertex $v$ and one edge,
which is a loop isomorphism, by Lemma \ref{loopiso}.
Hence $\pi\cong{S}\rtimes\mathbb{Z}$ and so has two ends.
Therefore $\widetilde{X}\simeq{S^{n-1}}$ and so $S$ has periodic cohomology.
Since $S$ is nilpotent of odd order it is cyclic.

In general, $S$ is metacyclic, by Proposition 10.1.10 of \cite{Ro},
since all its Sylow subgroups are cyclic.
\end{proof}

This does not extend to subgroups of even order, as it stands.
However for groups of odd order ``metacyclic" is equivalent to
``having periodic cohomology", 
and in all known examples the vertex groups have the latter property.

\begin{cor}
\label{oddordercor}
If $\pi$ has no subgroup isomorphic to $(\mathbb{Z}/2\mathbb{Z})^2$ 
then all finite subgroups of $\pi$ have periodic cohomology.
\end{cor}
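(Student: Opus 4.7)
The plan is to invoke the classical Artin--Tate/Swan characterization: a finite group $S$ has periodic cohomology if and only if every Sylow subgroup of $S$ is cyclic or generalized quaternion. So I reduce the question to checking Sylow subgroups of finite subgroups of $\pi$ prime by prime.

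For odd primes $p$, let $S_p$ be a Sylow $p$-subgroup of some finite subgroup of $\pi$. Then $S_p$ is a (nilpotent) $p$-group of odd order, and the first half of the proof of Theorem \ref{oddorder} already handles precisely this case: applying the Lemma \ref{loopiso} analysis to $F S_p$ shows that the reduced indecomposable graph of groups structure has a single vertex and a loop isomorphism, so an indecomposable factor of $FS_p$ is of the form $S_p\rtimes\mathbb{Z}$ and has two ends. This forces $\widetilde{X}\simeq S^{n-1}$, hence $S_p$ acts freely on a sphere and therefore has periodic cohomology (in fact it is cyclic, since it is nilpotent of odd order). Thus the odd-primary Sylow subgroups are all cyclic, without any use of the no-$(\mathbb{Z}/2\mathbb{Z})^2$ hypothesis.

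For $p=2$, the Sylow 2-subgroup $S_2$ of any finite subgroup of $\pi$ is a 2-group, and by hypothesis it contains no copy of $(\mathbb{Z}/2\mathbb{Z})^2$. A classical theorem (Burnside; see e.g.\ Gorenstein, \emph{Finite Groups}, Theorem 5.4.10) asserts that a finite 2-group with a unique subgroup of order 2 is either cyclic or generalized quaternion. Either possibility has periodic cohomology.

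Putting the two cases together, every Sylow subgroup of every finite subgroup $S\leq\pi$ is cyclic or generalized quaternion, so $S$ has periodic cohomology by the Artin--Tate/Swan criterion. The main step is really the odd-prime case, which is essentially done in Theorem \ref{oddorder}; the $p=2$ case is immediate from the hypothesis and Burnside's classification of 2-groups with a unique involution.
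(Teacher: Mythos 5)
Your proof is correct and follows essentially the same route as the paper: the paper likewise reduces to Sylow subgroups, cites the classification of $2$-groups without $(\mathbb{Z}/2\mathbb{Z})^2$ as cyclic or quaternionic (Proposition 5.3.6 of \cite{Ro}), uses the cyclicity of odd-order $p$-subgroups coming from Theorem \ref{oddorder}, and concludes by the criterion that periodic cohomology is equivalent to all Sylow subgroups being cyclic or quaternionic (Proposition VI.9.3 of \cite{Bro}). Your remark that the odd-prime case really rests on the nilpotent step inside the \emph{proof} of Theorem \ref{oddorder} (the stated conclusion ``metacyclic'' alone would not suffice) is a fair and slightly more careful reading than the paper's citation.
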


\begin{proof}
The exclusion of $(\mathbb{Z}/2\mathbb{Z})^2$ implies that 
finite 2-groups in $\pi$ are cyclic or quaternionic.
(See Proposition 5.3.6 of \cite{Ro}.)
Since all finite $p$-groups of odd order in $\pi$ are cyclic, 
by Theorem \ref{oddorder},
it follows that all finite subgroups have periodic cohomology.
(See Proposition VI.9.3 of \cite{Bro}.)
\end{proof}

Finite groups with periodic cohomology fall into six families:
\renewcommand{\theenumi}{\Roman{enumi}}
\begin{enumerate}
\item $\mathbb{Z}/m\mathbb{Z}\rtimes{\mathbb{Z}/q\mathbb{Z}}$;

\item $\mathbb{Z}/m\mathbb{Z}\rtimes(\mathbb{Z}/q\mathbb{Z}\times{Q(2^i)})$, $i\geq3$;

\item $\mathbb{Z}/m\mathbb{Z}\rtimes(\mathbb{Z}/q\mathbb{Z}\times{T^*_k})$, $k\geq1$;

\item $\mathbb{Z}/m\mathbb{Z}\rtimes(\mathbb{Z}/q\mathbb{Z}\times{O^*_k})$, $k\geq1$;

\item $(\mathbb{Z}/m\mathbb{Z}\rtimes{\mathbb{Z}/q\mathbb{Z}})\times{SL(2,p)}$, $p\geq5$ prime;

\item $\mathbb{Z}/m\mathbb{Z}\rtimes(\mathbb{Z}/q\mathbb{Z}\times{TL(2,p)})$, $p\geq5$ prime.
\end{enumerate}
Here $m$ is odd, and $m$, $q$ and the order of the quotient 
by the metacyclic subgroup $\mathbb{Z}/m\mathbb{Z}\rtimes{\mathbb{Z}/q\mathbb{Z}}$ are relatively prime.
The first family includes cyclic groups,
dihedral groups $D_{2m}=\mathbb{Z}/m\mathbb{Z}\rtimes_{-1}\mathbb{Z}/2\mathbb{Z}$ 
with $m$ odd, 
and the groups of odd order with periodic cohomology.
The group $Q(2^i)$ is the quaternionic group of order $2^i$, 
with presentation 
\[
\langle{x,y}|x^{2^{i-1}}=1,~x^{2^{i-2}}=y^2,~yxy^{-1}=x^{-1}\rangle,
\]
and $T^*_k$ and $O^*_k$ are the generalized binary tetrahedral 
and octahedral groups, respectively.
Then $T^*_k\cong{O^*_k}'\cong{Q(8)\rtimes{\mathbb{Z}/3^k\mathbb{Z}}}$ 
and has index 2 in $O^*_k$.
If $p$ is an odd prime then $TL(2,p)$ may be defined as follows.
Choose a nonsquare $\rho\in\mathbb{F}_p^\times$, 
and let $TL(2,p)\subset{GL(2,p)}$
be the subset of matrices with determinant 1 or $\rho$.
The multiplication $\star$ is given by $A\star{B}=AB$ 
if $A$ or $B$ has determinant 1, 
and $A\star{B}=\rho^{-1}AB$ otherwise.
Then $SL(2,p)=TL(2,p)'$ and has index 2.
(Note also that $SL(2,3)\cong{T^*_1}$ and $TL(2,3)\cong{O^*_1}$.)

We shall not specify the actions in the semidirect products here,
as these play no role in our arguments. 
We shall only need the following simple facts about such groups, 
which may easily be checked by inspecting the terms of the above list.
Let $G$ be a finite group of even order with periodic cohomology.
If $G$ is not metacyclic then its Sylow 2-subgroup is quaternionic, 
and $G$ has an unique element of order 2, which is central.
Hence if $G$ has a dihedral subgroup $D_{2\ell}$ then it is metacyclic.
Moreover,  $D_{2\ell}'$ is then normal in $G$.
If $G$ is metacyclic or of type IV or VI it has an unique subgroup of index 2,
while if it is of type III or V there is no subgroup.
Groups of type II have three such subgroups.
(See  \cite{W2013} for more on these groups.)

\section{Virtually Free Groups without Dihedral Subgroups}
\label{section:nodihedralsub}

Our strategy for proving Theorem \ref{nodihedral} (the main part of Theorem C)
is to use Theorem \ref{Cinf}, the Normalizer Condition, 
and Lemma \ref{noisolates} to show that the graph has just one edge,
which is either a loop isomorphism or an MC-tie.
Lemma \ref{isoneloop} implies that if $G_v$ is a vertex group of maximal order then
there is either a loop isomorphism or an MC-tie  with $v$ as one vertex.
Lemmas \ref{loop2} and \ref{notlooporMC}  show that if $\pi$ has 
no dihedral subgroup then all edge groups have index $\leq 2$ in adjacent vertex groups.
The main result then follows fairly easily.
We shall assume that $X$ is a $\mathrm{PD}_n$-complex
and $\pi\cong\pi\mathcal{G}$,
where $(\mathcal{G},\Gamma)$ is a reduced, indecomposable finite graph of finite groups,
with at least one edge, and that $\pi$ does not have $D_4$ as a subgroup.
We shall not state these conditions explicitly in the lemmas.

\begin{lemma}
\label{isoneloop} 
At each vertex $v$ there is either a loop isomorphism or an edge $e$ 
with distinct vertices $v,w$ and such that $[G_v:G_e]=2$ and 
$N_\pi(G_e)$ has two ends.
\end{lemma}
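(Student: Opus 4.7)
The plan is to argue by contradiction: suppose there is no loop isomorphism at $v$, and exhibit the edge described in the statement. By Lemma \ref{Ninf}, the absence of a loop isomorphism forces $N_\pi(G_v)$ to be finite; since vertex groups are maximal finite subgroups of $\pi$, this means $N_\pi(G_v)=G_v$.

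First I would reduce to finding a ``good'' prime-order element $g\in G_v\cap\pi^+$ and a witnessing edge. Small vertex groups can be handled separately: if $G_v$ is trivial then, since $(\mathcal{G},\Gamma)$ is reduced and indecomposable, all edges at $v$ are loops, forcing $\pi$ to be free and (by Theorem~B) infinite cyclic, in which case the unique edge is a loop isomorphism and there is nothing to prove; similarly $|G_v|=2$ together with $N_\pi(G_v)=G_v$ and indecomposability leaves only cases where the desired edge is forced. Otherwise, Corollary~\ref{oddordercor} says $G_v$ has periodic cohomology, so its Sylow subgroups are cyclic or quaternionic. I would pick $g\in G_v$ of prime order $p$ with $\omega(g)=1$: for odd $p$ any prime-order element lies in $\pi^+$ automatically, while for $p=2$ the unique involution of a quaternionic Sylow $2$-subgroup (or of a cyclic one) must satisfy $\omega=1$, since by the no-$D_4$ hypothesis $\pi$ has at most one involution, and if that involution had $\omega=-1$ Theorem~\ref{Cinf} applied throughout $G_v$ would force $G_v$ to have no other $2$-torsion, reducing to cases already handled. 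With $g$ chosen, Lemma \ref{twoends} gives that $N_\pi(\langle g\rangle)$ has two ends, and Lemma~\ref{noisolates} (applied to $H=\langle g\rangle\leq G_v\cap\pi^+$) yields an edge $e$ at $v$ with $G_e\cap\langle g\rangle\ne 1$; primality forces $g\in G_e$.

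The main work, and the main obstacle, is deducing the numerical condition $[G_v:G_e]=2$ and that $o(e)\ne t(e)$. Let $w$ denote the other endpoint of $e$. The defining structure of $\pi\mathcal{G}$ embeds $G_v\ast_{G_e}G_w$ (or the HNN extension $G_v\ast_{G_e}\varphi$ when $v=w$) into $\pi$. Passing to centralizers of $g$ gives a subgroup
\[
C_{G_v}(g)\ast_{C_{G_e}(g)}C_{G_w}(g)\;\leq\;C_\pi(g),
\]
and by Theorem~\ref{Cinf} the righthand side is virtually $\mathbb{Z}$. A free product with amalgamation of finite groups is virtually $\mathbb{Z}$ only when both factors have index at most $2$ over the amalgamated subgroup, with at least one index equal to $2$. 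I would combine this with Lemma~\ref{CinN} applied at the level of $G_e$ itself, together with the Normalizer Condition in a Sylow $p$-subgroup of $G_v$ containing $g$ (which is nilpotent): the Normalizer Condition shows $N_{G_v}(\langle g\rangle)$ is strictly larger than $\langle g\rangle$ whenever $\langle g\rangle$ is a proper subgroup of its Sylow, forcing enough centralizing structure at $v$ that $[G_v:G_e]=2$.

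Finally I would rule out $v=w$. If $e$ were a non-isomorphism loop, the HNN structure would give a stable letter normalizing $G_e$, and combining this with $[G_v:G_e]=2$ would produce an element normalizing $G_v$ outside $G_v$, contradicting $N_\pi(G_v)=G_v$, or else produce a loop isomorphism at $v$ after collapsing, again contradicting our standing assumption. Hence $o(e)\ne t(e)$. With $v\ne w$ and $[G_v:G_e]=[G_w:G_e]=2$ (by a symmetric application of the argument above at $w$, or by Lemma~\ref{CinN} applied to $G_e$), the edge $e$ is essentially an MC-tie, and Lemma~\ref{Ninf} gives $N_\pi(G_e)$ infinite; Theorem~\ref{Cinf} together with Lemma~\ref{CinN} forces it to be two-ended, completing the argument.
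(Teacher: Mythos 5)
Your overall skeleton (find a prime-order element of $G_v\cap\pi^+$, use Lemma \ref{noisolates} to produce an edge containing it, and use two-endedness of normalizers/centralizers to bound indices) matches the paper's, but there is a genuine gap at exactly the point you flag as ``the main obstacle'', and it is not bridgeable as written. The paper does not take an arbitrary prime-order $g\in G_v\cap\pi^+$: it first reduces to $\pi=FG_v$ (so that $G_v$ has maximal order among finite subgroups and every adjacent $G_w$ embeds in $G_v\cong FG_v/F$), and then chooses $S$ to be a \emph{normal} subgroup of $G_v$ of prime order --- the unique subgroup of odd prime order $p$ in the normal cyclic part when $G_v$ is metacyclic of order not a power of $2$, and the central involution (which lies in $\pi^+$ because it is a square in a cyclic or quaternionic Sylow $2$-subgroup) otherwise. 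Normality is the whole point: $S$ is then normal in $G_e$ and in $G_w$ as well, so the entire amalgam $G_v*_{G_e}G_w$ (or $\langle G_v,t_e\rangle$ in the loop case) lies in $N_\pi(S)$, which is finite or two-ended by Theorem \ref{Cinf} and Lemma \ref{CinN}; this forces $[G_v:G_e]\le 2$ and $[G_w:G_e]\le 2$ outright, and forces $G_e=G_v$ when $e$ is a loop. Your centralizer amalgam $C_{G_v}(g)*_{C_{G_e}(g)}C_{G_w}(g)\le C_\pi(g)$ only yields $[C_{G_v}(g):C_{G_e}(g)]\le2$, which says nothing about $[G_v:G_e]$ when $\langle g\rangle$ is not normal in $G_v$. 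Concretely, $G_v\cong\mathbb{Z}/7\mathbb{Z}\rtimes\mathbb{Z}/3\mathbb{Z}$ is metacyclic with periodic cohomology and contains no $D_4$; if you pick $g$ of order $3$ then $C_{G_v}(g)=N_{G_v}(\langle g\rangle)=\langle g\rangle$ is its own Sylow subgroup, the Normalizer Condition gives nothing, and your argument cannot exclude an edge group $G_e=\langle g\rangle$ of index $7$. The correct move for such a $G_v$ is to take $S$ of order $7$, which your scheme never does.

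Two smaller problems. The claim that the no-$D_4$ hypothesis gives ``at most one involution'' in $\pi$ is false: only commuting pairs of involutions are excluded, and dihedral configurations (which are not ruled out at the level of these lemmas) carry many involutions; the paper instead gets $\omega(g)=1$ for the relevant involution because it is a square. And the reduction to $\pi=FG_v$, which you omit, is what justifies transporting normality of $S$ into $G_w$ and hence the index bound at the far vertex; without it your ``symmetric application at $w$'' has no basis, since $G_w$ need not contain a conjugate of $S$ as a normal subgroup a priori.
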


\begin{proof}
Let $F$ be a free normal subgroup of finite index in $\pi$.
After replacing $\pi$ by an indecomposable factor of $FG_v$, 
if necessary, we may assume that $\pi=FG_v$.
Since $\pi$ is infinite and indecomposable there is
at least one edge with $v$ as a vertex.
If $G_v$ has prime order then each such edge must be a loop isomorphism.
Thus we may assume henceforth that $|G_v|$ is not prime. 

If $G_v$ is metacyclic but $|G_v|$ is not a power of 2 then
$G_v$ has a cyclic normal subgroup $S$ of odd prime order $p$.
If $G_v$ has order $2^k\geq4$ 
or if $G$ is not metacyclic then it has a central element $g$
of order 2 such that $\omega(g)=1$, 
and we let $S=\langle{g}\rangle$.
In each case $S<\pi^+$.

By Lemma \ref{noisolates}, there is an edge $e$  
with $v$ as one vertex and such that $S\leq{G_e}$.
If both vertices are $v$ then $S$ is normalized by $G_v$ 
and by $t_e$, the stable letter associated to $e$,
since $S$ is the unique subgroup of $G_v$ of order $p$.
The subgroup $\langle{G_v,t_e}\rangle$ has 
infinitely many ends unless $G_e=G_v$.
Hence $G_e=G_v$, and so $e$ is a loop isomorphism, by Theorem \ref{Cinf}.

If $e$ has distinct vertices $v\not=w$ then $G_w$ 
is isomorphic to its image in
$FG_v/F\cong{G_v}$.
Hence $S$ is also normal in ${G}_w$, and $|G_w|\leq|G_v|$.
Therefore $S$ is normal in $G_v*_{G_e}G_w$.
Theorem \ref{Cinf} and Lemma \ref{CinN} together imply that
the normalizer of any finite subgroup of $\pi$ is finite or has two ends.
Hence $[G_v:G_e]\leq2$ and $[G_w:G_e]\leq2$. 
Since $e$ is not a loop isomorphism, $[G_v:G_e]=[G_w:G_e]=2$.
Hence $G_e$ is normal in $G_v*_{G_e}G_w$,
and so $N_\pi(G_e)$ has two ends.
\end{proof}

If $G_v$ has no subgroup of index $2$ 
(e.g., if it is metacyclic of odd order or is of type III or V) 
then Lemma \ref{isoneloop} ensures that  there is a loop isomorphism at $v$.
If $G_v$ has maximal order among finite subgroups of $\pi$ and $[G_v:G_e]=2$ then $e$ is an MC-tie.
The argument for Lemma \ref{isoneloop} shows that if $e$ is not a loop isomorphism 
then it an MC-tie for the induced graph of groups structure for $FG_v$.
However, it is not otherwise  obvious that it must be an MC-tie for the original graph of groups 
$(\mathcal{G},\Gamma)$.

\begin{lemma}
\label{loop2}
Let  $f$ be an edge with both vertices $v$.
If $f$ is not a loop isomorphism then $|G_f|=2$ and $G_v$ is dihedral.
\end{lemma}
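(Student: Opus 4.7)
The plan is to analyse the HNN extension $H=G_v*_{G_f}\leq\pi$. Since $f$ is not a loop isomorphism, at least one of the two inclusions of $G_f$ into $G_v$ is proper, so every vertex of the Bass--Serre tree of $H$ has valence $[G_v:G_f]+[G_v:\phi_f(G_f)]\geq 3$, and hence $H$ has infinitely many ends.

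The crucial preliminary step is to show that $H$ admits no nontrivial finite normal subgroup. If such a subgroup $N$ existed, then for any $x\in N$ of prime order the conjugation orbit of $x$ would lie in the finite set $N$, so $C_H(x)$ would have finite index in $H$ and thus inherit infinitely many ends; but Theorem~\ref{Cinf} forces $C_\pi(x)\supseteq C_H(x)$ to be virtually $\mathbb{Z}$, a contradiction. Translating through Bass--Serre theory, this means there is no nontrivial subgroup $N\leq G_f$ satisfying both $N\triangleleft G_v$ and $\phi_f(N)=N$.

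Next I would eliminate the possibility that $G_v$ has a unique element of order two. If $z$ were that element it would be central in $G_v$; should $|G_f|$ be even, then $z\in G_f$ and $\phi_f(z)$ would be the unique involution of $\phi_f(G_f)$, hence also $z$, so $\langle z\rangle$ would be a forbidden $\phi_f$-invariant normal subgroup. Thus $|G_f|$ would be odd in this subcase, and the main obstacle is to push this assumption to a contradiction; I expect to combine Lemma~\ref{noisolates} applied to $\langle z\rangle\leq G_v\cap\pi^+$ (after verifying $\omega(z)=1$ by analysing whether $C_\pi(z)$ is infinite via Theorem~\ref{Cinf}) with a careful study of the odd Hall subgroup of $G_v$ and the metacyclic structure of $G_f$ provided by Theorem~\ref{oddorder}, in order to extract a $\phi_f$-invariant normal subgroup of $G_v$ sitting inside $G_f$.

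Once this case is eliminated $G_v$ has several involutions, and by Corollary~\ref{oddordercor} its Sylow 2-subgroup is cyclic (since quaternionic groups have a unique involution). The classification of groups with periodic cohomology and cyclic Sylow 2-subgroup then forces $G_v$ to be dihedral, and the absence of $D_4$ as a subgroup of $\pi$ excludes $D_{2\ell}$ with $\ell$ even, so $G_v=D_{2\ell}$ with $\ell$ odd. The normal subgroups of $D_{2\ell}$ are precisely the subgroups of the rotation subgroup $\mathbb{Z}/\ell$, each of which is characteristic there and therefore coincides with its image under any injection into $G_v$; the forbidden-$N$ constraint rules out $G_f$ being contained in the rotation subgroup, leaving only the reflection subgroups, which yields $|G_f|=2$.
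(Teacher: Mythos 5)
Your opening moves are sound: $H=G_v*_{G_f}$ does have infinitely many ends when $f$ is not a loop isomorphism, and the argument via Theorem~\ref{Cinf} that $H$ has no nontrivial finite normal subgroup (equivalently, no nontrivial $N\leq G_f$ with $N\triangleleft G_v$ and $\phi_f(N)=N$) is correct. But this local criterion is strictly weaker than what the lemma asserts, and the proof does not close. First, your case division (unique involution versus several involutions) is not exhaustive: $G_v$ may have odd order and no involutions at all, and then the conclusion $|G_f|=2$ is impossible, so the lemma is really asserting that no such edge exists --- yet your invariant-normal-subgroup criterion can be vacuous there (take $G_v$ nonabelian metacyclic of odd order and $G_f$ a non-normal Sylow subgroup: no nontrivial subgroup of $G_f$ is normal in $G_v$). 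Second, in the case you do isolate as ``the main obstacle'' you only say what you \emph{expect} to combine; no argument is given. The missing ingredient in both situations is global, not local: the paper invokes Lemma~\ref{isoneloop} to produce a \emph{second} edge $e$ at $v$ (a loop isomorphism, or an index-$2$ edge whose group has two-ended normalizer), takes $g\in G_f$ of prime order $p$ with $S=\langle g\rangle$ the unique order-$p$ subgroup of a cyclic or quaternionic Sylow subgroup, notes that $(at_f)^{p-1}$ centralizes $S$ for suitable $a\in G_v$, and then uses the infinite-order element coming from $e$ to generate a nonabelian free subgroup of $C_\pi(S)$, contradicting Theorem~\ref{Cinf}. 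A single HNN edge never yields that second independent infinite-order centralizing element, so no amount of analysis of $H$ alone will finish the argument.

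There is also a concrete error in your last step: ``the classification of groups with periodic cohomology and cyclic Sylow $2$-subgroup forces $G_v$ to be dihedral'' is false, even assuming several involutions. The group $\mathbb{Z}/7\mathbb{Z}\rtimes\mathbb{Z}/6\mathbb{Z}$ with faithful action is metacyclic of type I, has all Sylow subgroups cyclic, has seven involutions (the elements $(a,3)$ act as $-1$ on $\mathbb{Z}/7\mathbb{Z}$), and is not dihedral. The paper reaches dihedrality by a different route: having first forced $|G_f|=2$, it argues that if $G_v$ is not dihedral then the involution $G_f$ is central in $G_v$, whence $\langle G_v,t_f\rangle$ gives a nonabelian free subgroup centralizing $G_f$, again contradicting Theorem~\ref{Cinf}. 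Your closing computation with the normal subgroups of $D_{2\ell}$ for $\ell$ odd is fine, but it is reached only after two unsupported steps.
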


\begin{proof}
Suppose that $|G_f|>2$.
Let $g$ be an element of $G_f$ of prime order $p$.
Since the Sylow subgroups of $G_v$ are cyclic or quaternionic,
each Sylow $p$-subgroup has an unique subgroup $S$ of order $p$.
Therefore, if  $t_f$ is the stable letter associated to $f$ then
there is an $a\in{G_v}$ such that $at_fgt_f^{-1}a^{-1}=g^s$ for some $0<s<p$.
Hence $(at_f)^{p-1}$ centralizes $S$.
By Lemma \ref{isoneloop}  there is another edge $e$ which is either a loop isomorphism at $v$ or 
has distinct vertices $u$ and $v$, and such that $[G_v:G_e]=2$ and $N_\pi(G_e)$ has two ends.

If $e$ is a loop isomorphism then $S$ is also centralized by some power of $t_e$.
If $[G_v:G_e]=2$ and $N_\pi(G_e)$ has two ends we  may assume that $S\leq{G_e}$, since $|G_f|>2$,
and then $S$ is centralized by an element of infinite order in $N_\pi(G_e)$.
In each case, we find that $S$ is centralized by a nonabelian free subgroup,
contradicting Theorem \ref{Cinf}.

Therefore we must have $G_f\cong\mathbb{Z}/2\mathbb{Z}$.
If $G_v$ is not dihedral then $G_f$ is central in $G_v$.
But the subgroup generated by $G_v$ and $t_f$ contains a nonabelian free group,
and so we again contradict Theorem \ref{Cinf}.
Since $G_v\not\cong{D_4}$, it has order at least 6.
\end{proof}

This lemma indicates why we could require an MC-tie to have distinct vertices.

\begin{lemma}
\label{1loopiso}
Let $e$ and $f$ be distinct edges with vertices $u,v$ and $v,w$, respectively. 
If $e$ is a loop isomorphism at $v$ or an MC-tie then $w\not=u$ or $v$, 
and $f$ is neither a loop isomorphism nor an MC-tie.
\end{lemma}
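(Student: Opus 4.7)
The plan is to derive a contradiction in each forbidden configuration of the pair $(e,f)$ by producing, inside the centralizer $C_\pi(x)$ of a carefully chosen torsion element $x\in\pi$, a free subgroup of rank two. This will contradict Theorem~\ref{Cinf}, which forces such a centralizer to be virtually $\mathbb{Z}$ as soon as it is infinite. The two independent infinite-order elements of $C_\pi(x)$ will be drawn from the normalizer witnesses already in play: the stable letter $t_e$ when $e$ is a loop isomorphism, or the product $\alpha\beta\in N_\pi(G_e)$ from Lemma~\ref{Ninf} when $e$ is an MC-tie, and similarly for $f$. Throughout, $T$ denotes the Bass--Serre tree of $(\mathcal{G},\Gamma)$.

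I would begin with the loop-isomorphism case: $e$ is a loop isomorphism at $v$, so $u=v$ and $G_e=G_v$. Here the condition ``$w\neq u$ or $v$'' amounts to excluding $f$ from being a loop at $v$. If $f$ is such a loop, Lemma~\ref{loop2} leaves two sub-cases: $f$ is also a loop isomorphism, or $|G_f|=2$ with $G_v$ dihedral. In each I pick a nontrivial $x$ in $G_f$ (any nonzero element of $G_v$ in the first sub-case) and arrange, by adjusting the stable letter of $f$ by an element of $G_v$ if necessary, that $t_f$ centralizes $x$. A power $t_e^a$ with $a=|\mathrm{Aut}(G_v)|$ also centralizes $x$, because $t_e$ normalizes $G_v\supseteq\langle x\rangle$. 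If instead $f$ is an MC-tie from $v$ to some $w\neq v$, I take $x\in G_f\setminus\{1\}\subseteq G_v$ and apply $t_e^a$ together with a suitable power of the witness $\alpha\beta\in N_\pi(G_f)$.

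Next I would treat the MC-tie case: $e$ has distinct vertices $u,v$ and Lemma~\ref{Ninf} supplies a witness $\alpha\beta\in N_\pi(G_e)$ of infinite order. The condition on $w$ is then automatic, so I need only rule out $f$ being a loop isomorphism or an MC-tie. If $f$ is a loop isomorphism at $v$, take $x\in G_e\setminus\{1\}\subseteq G_v$: a power of $t_f$ centralizes $x$ because $t_f$ normalizes $G_v$, and a power of $\alpha\beta$ centralizes $x$ because $\alpha\beta$ normalizes $G_e$. If $f$ is another MC-tie sharing the vertex $v$, then $G_e$ and $G_f$ are both index-two subgroups of $G_v$; their intersection is nontrivial, for if $G_e\neq G_f$ then $[G_v:G_e\cap G_f]=4$, forcing $|G_v|>4$ since $D_4$ is excluded and is the only group of order $4$ with more than one index-two subgroup. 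Choosing $x\in (G_e\cap G_f)\setminus\{1\}$, the two MC-tie witnesses each provide, after a bounded power, an infinite-order element of $C_\pi(x)$.

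In every configuration, the two infinite-order elements produced are hyperbolic on $T$, and their axes both pass through a common lift of $v$ but diverge into lifts of the distinct edges $e$ and $f$, so they meet in at most a single vertex. The ping-pong lemma applied to sufficiently large powers then yields a free subgroup of rank two inside $C_\pi(x)$, contradicting Theorem~\ref{Cinf}. The main obstacle is the uniform bookkeeping: in each sub-case one must verify that the same $x$ is centralized, up to bounded powers, by the witnesses coming from both $e$ and $f$. Once this is arranged the ping-pong step is a routine application of Bass--Serre theory.
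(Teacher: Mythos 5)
Your overall strategy coincides with the paper's: in each forbidden configuration one produces a nonabelian free subgroup normalizing a nontrivial finite subgroup of $\pi$ (generated by stable letters and/or the MC-tie witnesses of Lemma \ref{Ninf}), and this contradicts Theorem \ref{Cinf} once Lemma \ref{CinN} is used to pass from normalizers to centralizers. Your explicit ping-pong justification on the Bass--Serre tree is a reasonable expansion of what the paper leaves implicit. The difference is that you work directly with the centralizer of a single chosen element $x$, which obliges you to check that both witnesses really do centralize that same $x$ --- and that is where the gaps lie.

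First, in the sub-case where $e$ and $f$ are both loop isomorphisms at $v$, you claim to ``arrange, by adjusting the stable letter of $f$ by an element of $G_v$'', that $t_f$ centralizes an arbitrary nontrivial $x\in G_v$. This fails in general: conjugation by $t_f$ induces an automorphism of $G_v$ which need not carry $x$ into its own $G_v$-conjugacy class. The repair is the one you already apply to $t_e$ --- replace $t_f$ by the power $t_f^{b}$ killing the induced automorphism of the finite group $G_v$ --- or, as in the paper, argue with $N_\pi(G_v)$ and invoke Lemma \ref{CinN}, which sidesteps the issue entirely. (The adjustment device is legitimate only where conjugacy can be certified, e.g.\ for the involution generating $G_f$ in the dihedral case of Lemma \ref{loop2}, where all involutions of $D_{2m}$ with $m$ odd are conjugate; you should say why.) Second, when $e$ is an MC-tie you declare the clause ``$w\neq u$ or $v$'' to be automatic. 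It is not: it excludes, among other things, a second edge $f$ joining $u$ to $v$ that is neither a loop isomorphism nor an MC-tie, and a loop at $v$ of the type permitted by Lemma \ref{loop2}, and each exclusion needs its own run of the centralizer argument (this is what the paper compresses into ``similar arguments''). As written, your proof delivers only the second half of the conclusion in the MC-tie case.
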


\begin{proof} 
Suppose that $e$ is a loop isomorphism at $v$
and that $f$ also has both vertices $v$.
If $f$ is a loop isomorphism then $G_v$ is normalized by the free group generated by the stable letters
$t_e$ and $t_f$, which contradicts Theorem \ref{Cinf}.
Therefore $f$ is not a loop isomorphism, and so Lemma \ref{loop2} applies.

If $e$ is a loop isomorphism and $f$ is an MC-tie with vertices $v,w$ 
then $G_f$ is normalized by $G_v*_{G_f}G_w$ and by some power of $t_e$ 
(since $G_v$ has only a finite number of subgroups of index 2). 
This again leads to a contradiction with Theorem \ref{Cinf}.

Finally, if $u\not=v$ and $e$ is an MC-tie then similar arguments 
show that $w\not=u$ or $v$, and that $f$ is not an MC-tie.
\end{proof}

In particular,
if every edge with $v$ as one vertex is either a loop isomorphism or an MC-tie
then there is just one edge, and so $\pi$ has two ends.

\begin{lemma}
\label{notlooporMC}
Let $f$ be an edge with vertices $v\not=w$.
If  $[G_w:G_f]>2$ then $G_f$ has order $2$, and $G_v$ or $G_w$ is dihedral.
\end{lemma}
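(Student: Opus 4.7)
The plan is to mirror the strategy of Lemmas \ref{loop2} and \ref{1loopiso}: produce normalizing or centralizing elements of a carefully chosen finite cyclic subgroup $S\leq G_f$ in such abundance that either $N_\pi(S)$ contains a non-abelian free subgroup or $C_\pi(g)$ has more than two ends, contradicting Theorem \ref{Cinf} together with Lemma \ref{CinN} (which force the normalizer of a finite cyclic subgroup to be finite or virtually $\mathbb{Z}$). Both conclusions will be reached by this contradiction scheme.

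For the assertion $|G_f|=2$, suppose $|G_f|\geq 3$ and choose $g\in G_f$ of prime order $p$, with $S=\langle g\rangle$. By Corollary \ref{oddordercor}, finite subgroups of $\pi$ have periodic cohomology, so every Sylow $p$-subgroup of $G_v$ or $G_w$ is cyclic or quaternionic with unique subgroup of order $p$. Hence $S$ is characteristic in any Sylow $p$-subgroup containing it, and every such Sylow normalizes $S$. Using $[G_w:G_f]>2$ together with a Sylow/coset counting argument (choosing $p$, if possible, to divide $[G_w:G_f]$, and otherwise invoking Lemma \ref{isoneloop} at $w$ for auxiliary structure), I extract an element $b\in N_{G_w}(S)\setminus G_f$. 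Reducedness of the graph of groups gives $[G_v:G_f]\geq 2$, and the parallel analysis at $v$, supplemented by Lemma \ref{isoneloop} at $v$, produces both an element $a\in N_{G_v}(S)\setminus G_f$ and an infinite-order element $\tau$ of $N_\pi(S)$ coming from a loop isomorphism or MC-tie at $v$. The amalgam normal form in $G_v*_{G_f}G_w\leq\pi$ makes $ab$ an infinite-order element of $N_\pi(S)$ algebraically independent of $\tau$, so $N_\pi(S)$ contains a non-abelian free subgroup, contradicting Theorem \ref{Cinf} and Lemma \ref{CinN}.

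For the dihedral conclusion, with $|G_f|=2$ generated by $g$, suppose that neither $G_v$ nor $G_w$ is dihedral. Using the summary of periodic groups at the end of Section \ref{section:virtuallyfree}: non-metacyclic periodic groups have quaternionic Sylow $2$-subgroup with unique central involution, so in that case $g$ is central in the vertex group; in the metacyclic case a case analysis of type I (with help from Lemma \ref{noisolates} applied to a conjugate involution $g'=hgh^{-1}\not\in G_f$ when $g$ is not central) shows that non-dihedrality again forces $g$ to be central. Hence $G_f$ is central in each of $G_v$ and $G_w$, so $C_\pi(g)\supseteq G_v*_{G_f}G_w$, an amalgam with $[G_v:G_f]\cdot[G_w:G_f]\geq 6$, which therefore has infinitely many ends, contradicting the virtual-$\mathbb{Z}$ conclusion of Theorem \ref{Cinf}. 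The principal obstacle throughout is the first step: ensuring that the Sylow-theoretic normalizer of $S$ produces an element in $G_w\setminus G_f$, since the Sylow $p$-subgroup of $G_w$ could in principle sit inside $G_f$. Controlling this interaction between the Sylow structure of $G_w$ and the coset decomposition by $G_f$, and combining it cleanly with the auxiliary edges supplied by Lemma \ref{isoneloop}, is where the argument must be executed with care.
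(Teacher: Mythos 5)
There are genuine gaps in both halves of your argument. For the claim $|G_f|=2$, the mechanism you propose --- extracting $b\in N_{G_w}(S)\setminus{G_f}$ from the Sylow structure of $G_w$ --- fails in admissible configurations. Take $G_w\cong\mathbb{Z}/7\mathbb{Z}\rtimes\mathbb{Z}/3\mathbb{Z}$ (Frobenius) and $G_f$ a Sylow $3$-subgroup: then $[G_w:G_f]=7>2$, $|G_f|=3$, and $S=G_f$ is self-normalizing in $G_w$, so no such $b$ exists; nor can you switch to $p=7$, since $7\nmid|G_f|$. Your fallback (``invoking Lemma \ref{isoneloop} at $w$ for auxiliary structure'') is precisely where the entire proof lives, and you have not executed it. The paper's argument runs through that route exclusively: it first reduces to $\pi=FG_w$ so that $|G_w|$ is maximal, takes the auxiliary edges $e$ at $v$ and $g$ at $w$ supplied by Lemma \ref{isoneloop}, passes to $H=G_f\cap{G_e}\cap{G_g}$ (of index at most $4$ in $G_f$, since each of $G_f\cap G_e$, $G_f\cap G_g$ has index at most $2$), and shows by a case analysis on whether $e$ and $g$ are loop isomorphisms or MC-ties --- using normal forms to verify that the two resulting infinite-order elements of $C_\pi(H)$ really do generate a nonabelian free group, which is not automatic and which you assert without proof for $ab$ and $\tau$ --- that $H=1$; then $G_f$ embeds in $(\mathbb{Z}/2\mathbb{Z})^2$ and, being nontrivial and not $D_4$, is $\mathbb{Z}/2\mathbb{Z}$.

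For the dihedral conclusion, your dichotomy ``non-dihedral $\Rightarrow$ the involution generating $G_f$ is central in the vertex group'' is false for metacyclic groups with periodic cohomology: in $\mathbb{Z}/5\mathbb{Z}\rtimes\mathbb{Z}/4\mathbb{Z}$ with faithful action, the involutions are non-central (each has centralizer a Sylow $2$-subgroup of order $4$) and the group is not dihedral. Lemma \ref{noisolates} does not repair this. The paper instead argues that $G_f$, being of order $2$, is central in the Sylow $2$-subgroups, and then splits into: (a) $G_f$ is self-centralizing in one vertex group, forcing that group to be dihedral; or (b) $C_{G_v}(G_f)$ and $C_{G_w}(G_f)$ both have order exactly $4$ (by the two-ends constraint on $C_\pi(G_f)$ applied to the sub-amalgam of centralizers) and the vertex groups are of the form $\mathbb{Z}/m\mathbb{Z}\rtimes_\theta\mathbb{Z}/4\mathbb{Z}$ with $\theta$ injective; case (b) is then eliminated using the uniqueness of the index-$2$ subgroup to place $G_f$ inside $G_e$ and $G_g$ and rerunning the free-subgroup argument with $H=G_f$. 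Your proof has no counterpart to case (b), which is exactly the case exemplified above, so the argument as written does not establish the lemma.
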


\begin{proof}
In order to show that $G_f$ has order 2, 
we may assume without loss of generality that $\pi=FG_w$,
where $F$ is a free normal subgroup of finite index.
Then every finite subgroup of $\pi$ is isomorphic to a subgroup of $G_w$,
and so $G_w$ has maximal order among such subgroups.
We may also assume that $o(f)=v $ and $t(f)=w$.
Clearly $f$ is neither a loop isomorphism nor an MC-tie.

There are edges $e$ and $g$ with vertices $u,v$ and $w,x$, respectively, 
which are loop isomorphisms 
or for which $[G_v:G_e]=2$ or $[G_w:G_g]=2$,
by Lemma \ref{isoneloop}.
In the latter case $g$ must be an MC-tie,  by the maximality of $|G_w|$.
Hence $v\not=w$ or $x$, by Lemma \ref{1loopiso}, and so $g\not=f$.
The subgroups $G_e$ and $G_g$ are centralized by elements of infinite order. 
Hence $G_f$ has a subgroup $H$ which is the intersection 
of two subgroups of index $\leq2$ in $G_f$, 
and which is centralized by these elements.
We shall show that we may assume that
they generate a nonabelian free subgroup of $C_\pi(H)$.

If $e$ and $g$ are each loop isomorphisms then $H=G_f$  
is centralized by powers of the stable letters $t_e$ and $t_f$.
If  $e$ is a loop isomorphism and $g$ is an MC-tie then
$H=G_f\cap{G_g}$ is centralized by powers of $t_e$ and $\alpha'\beta'$,
where $\alpha'\in{G_w}\setminus{G_g}$ and $\beta'\in{G_x}\setminus{G_g}$ do not involve $t_e$.

If $e$ is not a loop isomorphism  then $u\not=v$, 
by Lemma \ref{loop2}, and $G_e$ is normalized by some
$\alpha\beta$, where $\alpha\in{G_u\setminus{G_e}}$ and
$\beta\in{G_u\setminus{G_e}}$.
Suppose that $u\not=w$ or $x$.
If $g$ is a loop isomorphism then $H=G_f\cap{G_e}$ is normalized by powers of $\alpha\beta$ and $t_g$,
If $g$ is an MC-tie then $H=G_f\cap{G_e}\cap{G_g}$ is normalized by powers of
$\alpha\beta$ and of $\alpha'\beta'$.
A similar argument applies if $u=w$ or $x$.

In each case, these pairs generate a nonabelian free subgroup of $C_\pi(H)$,
which contradicts Theorem \ref{Cinf}, unless  $H=1$.
Since $G_f$ is nontrivial and is not $D_4$, 
we then have $G_f=\mathbb{Z}/2\mathbb{Z}$.

We now return to the general case (i.e, we do not assume that $\pi=FG_w$).
Since $G_f=\mathbb{Z}/2\mathbb{Z}$, 
it is central in the Sylow 2-subgroups of $G_v$ and $G_w$, 
and $C_\pi(G_f)$ has two ends or is finite.
Hence either $G_f$ is its own centralizer in one vertex group,
in which case the vertex group is dihedral, or
these Sylow subgroups both have order 4, 
and no element of odd order in either vertex group commutes with $G_f$.
In the latter case the vertex groups are metacyclic groups of the form 
$\mathbb{Z}/m\mathbb{Z}\rtimes_\theta\mathbb{Z}/4\mathbb{Z}$,
where $m$ is odd and $\theta:\mathbb{Z}/4\mathbb{Z}\to(\mathbb{Z}/m\mathbb{Z})^\times$
is injective.
Such groups have an unique subgroup of index 2,  and so $G_f\leq{G_e}$
and $G_f\leq{G_g}$.
But then the earlier argument applies with 
$H=G_f$ to show that $C_\pi(H)$ has a nonabelian free subgroup,
contradicting Theorem \ref{Cinf}.
Hence  $G_v$ or $G_w$ is dihedral.
\end{proof}

\begin{theorem}
\label{nodihedral}
If $\pi$ is infinite, virtually free and indecomposable, 
and no maximal finite subgroup is dihedral then $\pi$ has two ends, 
and its finite subgroups have cohomological period dividing $n$.
\end{theorem}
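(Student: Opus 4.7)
The plan is to combine the previous lemmas of this section to force the graph of groups structure of $\pi$ to degenerate to a single edge, and then to read off the periodicity from a sphere universal cover. Throughout I would invoke the stronger hypothesis that no finite subgroup of $\pi$ is dihedral of order $>2$ (which is strictly stronger than the standing ``no $D_4$'' assumption).

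First I would show that, under this hypothesis, every edge of $\Gamma$ is either a loop isomorphism or an MC-tie. For an edge with both endpoints equal to a vertex $v$, Lemma \ref{loop2} says that if it is not a loop isomorphism then $G_v$ is dihedral (and of order $\geq 6$), which is excluded. For an edge $f$ with distinct vertices $v, w$, Lemma \ref{notlooporMC} says that $[G_v:G_f] > 2$ or $[G_w:G_f] > 2$ would force $G_v$ or $G_w$ to be dihedral; both indices are therefore at most $2$, and since $(\mathcal{G},\Gamma)$ is reduced they equal $2$, so $f$ is an MC-tie.

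Next I would apply Lemma \ref{1loopiso}: if one edge at a vertex $v$ is a loop isomorphism or an MC-tie, then any other edge incident to $v$ is neither. Combined with the previous paragraph, no two edges of $\Gamma$ can share a vertex. Since $(\mathcal{G},\Gamma)$ is connected, indecomposable, and has at least one edge, this forces $\Gamma$ to consist of a single edge. If that edge is a loop isomorphism at $v$, then $\pi\cong G_v\rtimes\mathbb{Z}$; if it is an MC-tie with vertices $v,w$, then $G_e$ is normal in both $G_v$ and $G_w$, hence normal in the amalgam, with quotient $\mathbb{Z}/2\mathbb{Z} * \mathbb{Z}/2\mathbb{Z} = D_\infty$. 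In either case $\pi$ has a finite normal subgroup with quotient $\mathbb{Z}$ or $D_\infty$, so $\pi$ has two ends.

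Finally, since $\pi$ has two ends and $\widetilde{X}$ is $(n-2)$-connected, I would identify $\widetilde{X}\simeq S^{n-1}$ by passing to the finite-index $\mathbb{Z}$-subgroup $\sigma\leq\pi$ and examining the associated $PD_n$-cover $X_\sigma$ with $\pi_1\cong\mathbb{Z}$: Poincar\'e duality with $\mathbb{Z}[\sigma]$ coefficients forces $H_*(\widetilde{X};\mathbb{Z})\cong H_*(S^{n-1};\mathbb{Z})$, and then Hurewicz and Whitehead finish the job using $(n-2)$-connectedness. Every finite subgroup of $\pi$ then acts freely on $\widetilde{X}\simeq S^{n-1}$, and hence has cohomological period dividing $n$, as required. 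The main obstacle is verifying carefully that, when the single edge is an MC-tie, the amalgam is indeed two-ended (one must check that $G_e$ is normal, not merely of index $2$ on each side); beyond that the argument is an orderly application of the four lemmas above.
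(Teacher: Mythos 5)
Your proof is correct and follows essentially the same route as the paper: use Lemmas \ref{loop2}, \ref{notlooporMC} and \ref{1loopiso} to force $\Gamma$ to have a single edge which is a loop isomorphism or an MC-tie, deduce that $\pi$ has two ends, and read off the cohomological periodicity from $\widetilde{X}\simeq S^{n-1}$. The only difference is organizational: the paper splits into cases according to the order of a maximal vertex group (handling the case where all vertex groups have order $4$ separately via Lemma \ref{twoends}), whereas your uniform application of Lemmas \ref{loop2} and \ref{notlooporMC} at every vertex, combined with reducedness, covers all cases at once.
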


\begin{proof}
Let $G_v$ be a vertex group of maximal order.
Suppose first that $G_v$ has odd order.
Then $G_v$ has no subgroup of index 2 and none of order 2,
and so every edge $e$ with $v$ as a vertex must be a loop isomorphism,
by Lemmas \ref{loop2} and \ref{notlooporMC}.

Since no maximal finite subgroup is dihedral,
there are no dihedral vertex groups.
Therefore if $G_v$ has even order $>4$ and $f$ is an edge 
with vertices $v,w$ then $[G_v:G_f]\leq2$, 
by Lemma \ref{loop2} and \ref{notlooporMC}.
Since $|G_v|$ is maximal,  
$f$ is either a loop isomorphism or an MC-tie.

In each of these cases there must be just one edge, by Lemma \ref{1loopiso},
and so $\pi$ has two ends.

Finally, if all vertex groups have order 4 then they are cyclic, 
and all proper edge groups have order 2.
Hence there is an unique subgroup $S$ of order 2.
Clearly $S\leq\pi^+$,
and so $\pi=C_\pi(S)$ has two ends, by Lemma \ref{twoends}.

Since $\pi$ has two ends, $\widetilde{X}\simeq{S^{n-1}}$, 
and so finite subgroups of $\pi$ have cohomological period dividing $n$.
\end{proof}

In  the final case there are three possibilities: $\pi\cong\mathbb{Z}\oplus\mathbb{Z}/4\mathbb{Z}$,
$\mathbb{Z}/4\mathbb{Z}\rtimes_{-1}\mathbb{Z}$ 
or $\mathbb{Z}/4\mathbb{Z}*_{\mathbb{Z}/2\mathbb{Z}}\mathbb{Z}/4\mathbb{Z}$.

\begin{cor}
If $\pi$ has no element of even order then $\pi\cong{S}\rtimes\mathbb{Z}$,
where $S$ is a finite metacyclic group of odd order and of cohomological period dividing $n$. \hfill $\qed$
\end{cor}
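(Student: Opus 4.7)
The plan is to deduce the corollary straight from Theorem \ref{nodihedral} together with Theorem \ref{oddorder} and the standard structure theorem for groups with two ends cited at the end of Section \ref{section:crisp}.

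First I would observe that the hypothesis ``no element of even order'' is stronger than the hypothesis of Theorem \ref{nodihedral}: a dihedral group $D_{2\ell}$ of order $>2$ contains an involution, so the absence of elements of even order immediately rules out any dihedral subgroup of order $>2$, and in fact rules out $D_4=(\mathbb{Z}/2\mathbb{Z})^2$ as well. Hence Theorem \ref{nodihedral} applies and gives that $\pi$ has two ends and that every finite subgroup of $\pi$ has cohomological period dividing $n$.

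Next I would use the description of groups with two ends recalled before Lemma \ref{CinN}: such a $\pi$ has a (unique) maximal finite normal subgroup $S$ with quotient $\pi/S$ isomorphic to either $\mathbb{Z}$ or $D_\infty=\mathbb{Z}/2\mathbb{Z}*\mathbb{Z}/2\mathbb{Z}$. The key step is to rule out the dihedral quotient: if $\pi/S\cong D_\infty$ and $x\in\pi$ maps to one of the generating involutions, then $x^2\in S$ has odd order $m$ (since $S\leq\pi$ has no even-order elements), so $x$ has order $2m$, contradicting the hypothesis. Hence $\pi/S\cong\mathbb{Z}$, and since $\mathbb{Z}$ is free the extension
\[
1\to S\to\pi\to\mathbb{Z}\to1
\]
splits, giving $\pi\cong S\rtimes\mathbb{Z}$.

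Finally, the assumption on $\pi$ forces $S$ to have odd order, so Theorem \ref{oddorder} applies to give that $S$ is metacyclic, while Theorem \ref{nodihedral} already supplies that $S$ has cohomological period dividing $n$. The main (and essentially only) obstacle is the exclusion of the infinite dihedral quotient, but this is immediate from the parity argument above, so I do not anticipate any real difficulty.
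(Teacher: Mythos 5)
Your argument is correct and is precisely the (omitted) argument the paper intends: the absence of even-order elements excludes dihedral subgroups so Theorem \ref{nodihedral} applies, the $D_\infty$ quotient is ruled out by a parity argument, and Theorem \ref{oddorder} supplies metacyclicity of $S$. (Your parity step can be shortened: since $S$ is finite, any $x\in\pi$ mapping to an involution of $D_\infty$ has finite order, which is divisible by the order $2$ of its image.)
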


In particular, if $n$  is a power of 2 then $S$ must be cyclic. 
(See Exercise VI.9.6 of \cite{Bro}.)

When there is 2-torsion,
$\pi$ need not be an extension of $\mathbb{Z}$ by a  finite normal subgroup.
For example, if $MC$ is the mapping cylinder of the double cover of a lens space 
$L=L(2m,q)$ and $X=D(MC)=MC\cup_LMC$ is the double then $\pi$ 
is an extension of the infinite dihedral group $D_\infty$ by $\mathbb{Z}/m\mathbb{Z}$,
and $\widetilde{X}\cong{S^3}\times\mathbb{R}$.

\begin{theorem}
\label{mtorus}
Let $\pi$ have two ends and maximal finite normal subgroup $F$.
If $\pi$ is the fundamental group of an orientable $\mathrm{PD}_{n}$-complex $X$ with $\pi_1(X)\cong\pi$
and $\widetilde{X}\simeq{S^{n-1}}$ then $\pi\cong{F}\rtimes_\theta\mathbb{Z}$, and $X$ is a mapping torus.
\end{theorem}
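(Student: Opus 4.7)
The plan has two steps: first show $\pi\cong F\rtimes_\theta\mathbb{Z}$ (in particular, $\pi/F\cong\mathbb{Z}$, not $D_\infty$), then realize $X$ as a mapping torus.

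Since $\pi$ has two ends and $F$ is the maximal finite normal subgroup, $\pi/F$ is either $\mathbb{Z}$ or $D_\infty$. I would rule out $D_\infty$ using the Serre spectral sequence of the fibration $\widetilde{X}\simeq S^{n-1}\to X\to B\pi$ (obtained by turning the classifying map into a fibration). Let $\epsilon\colon\pi\to\{\pm1\}$ be the character of the $\pi$-action on $H^{n-1}(\widetilde{X};\mathbb{Z})$. Poincar\'e duality gives $H^1(\pi;\mathbb{Z}[\pi])\cong H_{n-1}(X;\mathbb{Z}[\pi]^\omega)$ as $\mathbb{Z}[\pi]$-modules. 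For a two-ended $\pi$ the left side is $\mathbb{Z}^\alpha$ with $\alpha$ the end-swap character (trivial when $\pi/F\cong\mathbb{Z}$; the sign character through $D_\infty$ when $\pi/F\cong D_\infty$), while since $\widetilde{X}\simeq S^{n-1}$ concentrates its homology in degree $n-1$ the right side is $\mathbb{Z}^{\epsilon+\omega}$. Hence $\epsilon=\alpha+\omega$; orientability ($\omega=0$) then forces $\epsilon=\alpha$, so $\epsilon\neq 0$ in the $D_\infty$ case. Consequently $E_2^{0,n-1}=(\mathbb{Z}^\epsilon)^\pi=0$, the only differential $d_n\colon E_n^{0,n-1}\to E_n^{n,0}=H^n(\pi;\mathbb{Z})$ into $E_n^{n,0}$ vanishes, and $E_\infty^{n,0}=H^n(\pi;\mathbb{Z})$ injects into $H^n(X;\mathbb{Z})$. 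But a Mayer--Vietoris computation for $\pi = G_u*_F G_v$ with $[G_u{:}F]=[G_v{:}F]=2$ (or $\pi = D_\infty$ itself if $F=1$) shows $H^n(\pi;\mathbb{Z})$ is nontrivial finite torsion for $n$ even, contradicting $H^n(X;\mathbb{Z})\cong\mathbb{Z}$. Hence $\pi/F\cong\mathbb{Z}$, and the extension $1\to F\to\pi\to\mathbb{Z}\to 1$ splits since $\mathbb{Z}$ is free, giving $\pi\cong F\rtimes_\theta\mathbb{Z}$.

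Now set $\widehat{X}:=\widetilde{X}/F$. Since $F$ is finite and acts freely on $\widetilde{X}\simeq S^{n-1}$, the space $\widehat{X}$ is a finite-dimensional $\mathrm{PD}_{n-1}$-complex with $\pi_1(\widehat{X})=F$ and universal cover $\widetilde{X}$. The covering $p\colon\widehat{X}\to X$ has deck group $\pi/F\cong\mathbb{Z}$; choose a generator and let $\theta\colon\widehat{X}\to\widehat{X}$ be the corresponding self-homeomorphism. Form the mapping torus $M(\theta)=\widehat{X}\times[0,1]/(x,0)\sim(\theta(x),1)$ and define $\phi\colon M(\theta)\to X$ by $\phi([x,t])=p(x)$; this is well defined since $p\circ\theta=p$. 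Then $\phi$ induces an isomorphism on $\pi_1$ (both groups equal $F\rtimes_\theta\mathbb{Z}$), and on universal covers it lifts to the projection $\widetilde{X}\times\mathbb{R}\to\widetilde{X}$, a homotopy equivalence. By Whitehead's theorem $\phi$ is a homotopy equivalence, so $X\simeq M(\theta)$.

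The main obstacle is the Poincar\'e-duality identification $\epsilon=\alpha+\omega$ in the first step, which requires careful tracking of the twisted $\mathbb{Z}[\pi]$-module conventions from Section~\ref{section:notation}. The Mayer--Vietoris calculation of $H^n(\pi;\mathbb{Z})$ is then routine, using that $H^n$ of any nontrivial finite group is nonzero for $n$ even and that the restriction maps $H^n(G_u;\mathbb{Z}),H^n(G_v;\mathbb{Z})\to H^n(F;\mathbb{Z})$ need not be injective; the Whitehead argument in the second step is standard.
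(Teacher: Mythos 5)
Your proposal is correct in outline, but it rules out the $D_\infty$ quotient by a genuinely different mechanism than the paper. The paper's argument is an Euler characteristic count: the cover $X_\sigma$ associated to the index-$\leq2$ subgroup $\sigma$ with $\sigma/F\cong\mathbb{Z}$ is a mapping torus of a self-equivalence of the $\mathrm{PD}_{n-1}$-complex $X_F\simeq S^{n-1}/F$, so $\chi(X_\sigma)=0$ and hence $\chi(X)=0$ by multiplicativity; but if $\sigma\neq\pi$ then $\pi/\pi'$ is finite, and since $cd_{\mathbb{Q}}\pi=1$ the spectral sequence of the universal covering together with Poincar\'e duality and orientability forces $H_q(X;\mathbb{Q})=0$ for $0<q<n$, whence $\chi(X)=2$ because $n$ is even --- a contradiction. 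You instead identify the character $\epsilon$ of the $\pi$-action on $H_{n-1}(\widetilde{X})$ with the end-swap character via Poincar\'e duality, deduce $E_2^{0,n-1}=0$, and use the Serre spectral sequence to embed the nonzero finite group $H^n(\pi;\mathbb{Z})$ into $H^n(X;\mathbb{Z})\cong\mathbb{Z}$. Both arguments use orientability in parallel ways (yours through $\epsilon=\alpha+\omega$, the paper's through $H_{n-1}(X;\mathbb{Q})\cong H^1(X;\mathbb{Q})$ and $H_n(X;\mathbb{Q})\cong\mathbb{Q}$); the paper's is shorter and avoids the twisted-module bookkeeping you flag as the main obstacle, while yours yields the extra structural fact that end-swapping elements act by $-1$ on $H_{n-1}(\widetilde{X})$. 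The mapping torus step is essentially the same standard construction in both treatments.

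One supporting claim needs repair: it is \emph{not} true that $H^n(G;\mathbb{Z})\neq0$ for every nontrivial finite group $G$ and every even $n>0$; for instance $H^2(A_5;\mathbb{Z})\cong\mathrm{Ext}(A_5^{ab},\mathbb{Z})=0$ since $A_5$ is perfect. What saves your Mayer--Vietoris computation is that $G_u$, $G_v$ and $F$ act freely on $\widetilde{X}\simeq S^{n-1}$ and therefore have periodic cohomology of period dividing $n$, with $H^n(G;\mathbb{Z})\cong\mathbb{Z}/|G|\mathbb{Z}$. Then $|H^n(G_u)\oplus H^n(G_v)|=4|F|^2$ exceeds $|H^n(F)|\leq|F|$, so the restriction map $H^n(G_u)\oplus H^n(G_v)\to H^n(F)$ has nontrivial kernel, and exactness sandwiches $H^n(\pi;\mathbb{Z})$ between finite groups while forcing it to be nonzero; a nonzero finite group cannot inject into $\mathbb{Z}$. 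With that substitution for the false general statement, your argument is sound.
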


\begin{proof}
The group $\pi$ has a maximal subgroup $\sigma$ of index $\leq2$ which contains $F$ and
such that $\sigma/F\cong\mathbb{Z}$.
Since $\widetilde{X}\simeq{S^{n-1}}$, the covering space $X_F\simeq{S^{n-1}/F}$ associated to $F$
is a PD$_{n-1}$-complex, and so the covering space $X_\sigma$ associated to $\sigma$ is the
mapping torus of a self homotopy equivalence of $F$.
Hence $\chi(X_\sigma)=0$, and so $\chi(X)=0$ also.
But if $\sigma\not=\pi$ then $\pi/\pi'$ is finite.
Since $cd_\mathbb{Q}\pi=1$, it follows from the spectral sequence for the universal
covering that $H_q(X;\mathbb{Q})=0$ for $0<q<n-1$.
This is also the case when $q=n-1$, by Poincar\'e duality.
Hence $\chi(X)=2$ (since $n$ is even).
This is a contradiction.
Therefore $\sigma=\pi\cong{F}\rtimes\mathbb{Z}$ and $X$ is a mapping torus.
\end{proof}

Theorems \ref{nodihedral} and \ref{mtorus} together establish Theorem C of the introduction.

It remains an open question whether the conclusion of Theorem C must hold 
if $\pi$ has a dihedral maximal finite subgroup.
(There are such examples with $\pi$ having two ends -- see Theorem \ref{wang} below.)
Lemmas \ref{1loopiso} and \ref{notlooporMC} impose some restrictions, 
but leave open the possibility that, for instance, there might be a $\mathrm{PD}_{2k}$-complex $X$
with $(2k-2)$-connected universal cover and $\pi=\pi_1(X)\cong\pi\mathcal{G}$,
where the underlying graph $\Gamma$ is a cycle of length four, the vertex groups are dihedral,
and the edges are alternately MC-ties or have edge group of order 2.

\section{Construction of Examples}
\label{section:construct}

Every finite group $F$ with cohomological period $m$ is the fundamental group 
of an orientable $\mathrm{PD}_{m-1}$-complex with universal cover $\simeq{S^{km-1}}$, for all $k\geq1$ \cite{Sw,W1967}.
Since $m$ is even, such complexes are odd-dimensional.
(In fact, the only non-orientable quotients of finite group actions on spheres 
are the even-dimensional real projective spaces $RP^{2n}$.) 
We may use such complexes to realize  groups with two ends.

\begin{theorem}
\label{wang}
If $\pi\cong{F}\rtimes_\theta\mathbb{Z}$ then there is a $\mathrm{PD}_{km}$-complex 
$X$ with $\pi_1(X)\cong\pi$ and $\widetilde{X}\simeq{S^{km-1}}$ 
if and only if $F$ has cohomological period dividing $km$ and
$H_{km-1}(\theta;\mathbb{Z})$ is multiplication by $\pm1$.
If $|F|>2$ then $X$ is orientable if and only if
$H_{km-1}(\theta;\mathbb{Z})=1$.
\end{theorem}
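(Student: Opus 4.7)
The plan is to prove necessity by exhibiting $X$ as a mapping torus of a self-homotopy equivalence of a spherical space form, and to prove sufficiency by running this construction in reverse.

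For necessity, suppose $X$ is a $\mathrm{PD}_{km}$-complex with $\pi_1(X)\cong F\rtimes_\theta\mathbb{Z}$ and $\widetilde X\simeq S^{km-1}$. I would first consider the infinite cyclic cover $X_F\to X$ corresponding to the normal subgroup $F\trianglelefteq\pi$. Since $\widetilde{X_F}=\widetilde X\simeq S^{km-1}$, $X_F$ is a $(km-1)$-dimensional spherical space form with $\pi_1=F$, so $F$ acts freely on $S^{km-1}$ and therefore has cohomological period dividing $km$ by Swan. A generator of the deck transformation group $\mathbb{Z}$ gives a self-homotopy equivalence $\phi\colon X_F\to X_F$ inducing $\theta$ on $\pi_1$, and $X\simeq M(\phi)$. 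A lift $\widetilde\phi\colon S^{km-1}\to S^{km-1}$ has degree $\varepsilon=\pm 1$, and naturality of the Serre spectral sequence for $S^{km-1}\to X_F\to BF$ (or equivalently, the identification of $H_{km-1}(F;\mathbb{Z})$ with a Tate cohomology group on which $\widetilde\phi$ acts by its degree) yields $H_{km-1}(\theta;\mathbb{Z})=\varepsilon=\pm 1$.

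For sufficiency, given $F$ of cohomological period dividing $km$ and $\theta$ satisfying the homological condition, I would invoke Swan--Wall to produce a spherical space form $Y$ with $\pi_1(Y)=F$ and $\widetilde Y\simeq S^{km-1}$. Such $Y$ are classified up to homotopy by the choice of $k$-invariant, a generator of $H^{km}(F;\mathbb{Z}^{\omega|_F})\cong\mathbb{Z}/|F|\mathbb{Z}$, and an automorphism $\theta$ of $F$ is realized by a self-homotopy equivalence $\phi\colon Y\to Y$ precisely when $\theta^*$ fixes the $k$-invariant up to sign. The hypothesis $H_{km-1}(\theta;\mathbb{Z})=\pm 1$, translated via Tate duality into the $\theta$-action on $H^{km}(F;\mathbb{Z})$, supplies exactly this condition, allowing a suitable $Y$ and $\phi$ to be chosen. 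Setting $X:=M(\phi)$ gives $\pi_1(X)\cong F\rtimes_\theta\mathbb{Z}$ and $\widetilde X\simeq\widetilde Y\times\mathbb{R}\simeq S^{km-1}$, and Poincar\'e duality for $X$ then follows from that of $Y$ via the Wang sequence and the standard cap-product computation for mapping tori.

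For the orientability clause, observe that $M(\phi)$ is orientable iff $Y$ is orientable and $\phi$ preserves its fundamental class, equivalently $\deg(\widetilde\phi)=+1$, which under the correspondence above is $H_{km-1}(\theta;\mathbb{Z})=+1$. When $|F|>2$ the units $\pm 1\in\mathbb{Z}/|F|\mathbb{Z}$ are distinct, so the two cases are genuinely separated; the assumption $|F|>2$ is needed precisely to make this distinction meaningful. The main obstacle will be pinning down the correspondence in both directions between the algebraic condition $H_{km-1}(\theta;\mathbb{Z})=\pm 1$ and the geometric condition that $\theta$ be induced by a self-homotopy equivalence of a suitable spherical space form with fundamental group $F$. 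This is a translation through the $k$-invariant and Tate duality between $H_{km-1}(F;\mathbb{Z})$ and $H^{km}(F;\mathbb{Z})$; carrying it out cleanly, and matching the sign conventions between degrees on $S^{km-1}$ and actions on group (co)homology, is where the real technical work lies.
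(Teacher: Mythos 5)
Your proposal is correct and follows essentially the same route as the paper: necessity via the intermediate cover $X_F$ (free $F$-action on $\widetilde{X}\simeq S^{km-1}$ gives the period condition, and the deck transformation/Wang sequence gives $H_{km-1}(\theta;\mathbb{Z})=\pm1$), sufficiency via the mapping torus of a self-homotopy equivalence inducing $\theta$, and the same $|F|>2$ observation for orientability. The only difference is that where you unwind the realization of $\theta$ by a self-homotopy equivalence through the $k$-invariant and Tate duality, the paper simply cites Plotnick \cite{Pl} for that step.
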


\begin{proof}
Let $X_F$ be a based orientable $\mathrm{PD}_{2k-1}$-complex with fundamental group 
$F$ and $\widetilde{X}\simeq{S^{2k-1}}$.
If $H_{2k-1}(\theta;\mathbb{Z})=\pm1$ then there is a self homotopy equivalence 
$f$ of $X_F$ which induces $\theta$ \cite{Pl}.
The mapping torus of $f$ is then a $\mathrm{PD}_{2k}$-complex with fundamental group 
$\pi$ and universal cover $\simeq{S^{2k-1}}$.

Conversely, if  a $\mathrm{PD}_{2k}$-complex $Y$ has fundamental group $\pi$ 
and universal cover $\widetilde{Y}\simeq{S^{mk-1}}$ then $F$ has 
cohomological period dividing $mk$, since it acts freely on $\widetilde{Y}$, 
and the condition $H_{km-1}(\theta;\mathbb{Z})=\pm1$ 
follows from the Wang sequence for the projection of $Y$ 
onto $S^1$ corresponding to the epimorphism $\pi\to\pi/F\cong\mathbb{Z}$.
(See Theorem 11.1 of \cite{Hi} for the case $n=4$.)

If $|F|>2$ then $H_{2k-1}(\theta;\mathbb{Z})=1$
 (as an automorphism of $H_{2k-1}(F;\mathbb{Z})\cong\mathbb{Z}/|F|\mathbb{Z}$)
if and only if ${H_{2k-1}(f;\mathbb{Z})=1}$ (as an automorphism of
 $H_{2k-1}(X;\mathbb{Z})\cong\mathbb{Z}$)
if and only if ${X}$ is orientable.
\end{proof}

Theorem \ref{wang} is essentially Theorem D of the introduction.

In particular, when $n$ is divisible by 4 there are examples 
with $\pi\cong{D_{2m}}\times\mathbb{Z}$, for odd $m>1$.
These do not satisfy the hypotheses of Theorem C.

Suppose now that $\pi\cong{G*_FH}$,
where $G$ and $H$ are finite groups with periodic cohomology and $[G:F]=[H:F]=2$.
Let  $n$ be a multiple of the cohomological periods of $G$ and $H$.
However there is one subtlety: we must be able to choose $\mathrm{PD}_{n-1}$ complexes $X_G$ and $X_H$
with fundamental groups $G$ and $H$ and universal covers $\simeq{S^{n-1}}$ in such a way
that the double covers associated to the subgroups $F$ are homotopy equivalent.
For then we may construct a $\mathrm{PD}_n$-complex $X$ with fundamental group $\pi$ and 
$\widetilde{X}\simeq{S^{n-1}}$ by gluing together two mapping cylinders via a homotopy equivalence
of their ``boundaries".
See Chapter 11 of \cite{Hi} for an example with $n=4$, $G=Q(24)$, 
$H=\mathbb{Z}/3\mathbb{Z}\times{Q(8)}$ and $F=\mathbb{Z}/12\mathbb{Z}$
 where this construction cannot be carried through.
 (The difficulty is that $\mathrm{PD}_3$-complexes with fundamental group $G$ or $H$ have
unique homotopy types: the double covers corresponding to $F$ are lens spaces which are not homotopy equivalent.
Similar examples should exist in higher dimensions.)

Since putting this paper on the arXiv in May 2016 we have learned that Theorem \ref{wang} 
and much of the subsequent discussion may also be found in the final section of \cite{GG}.
The paper \cite{GG} also gives estimates of the number of homotopy types realizing a given fundamental group.
However we have chosen to retain our independent treatment as it is brief and is a natural complement to our
more substantial earlier results.

In general, it is not known when such a $\mathrm{PD}_n$-complex is homotopy equivalent 
to a closed $n$-manifold.
(This question leads to delicate issues of algebraic number theory \cite{HM}.)
We note also that  there has been extensive research on the mixed-spherical space form
problem, on the fundamental groups of manifolds with universal covering space 
$S^n\times\mathbb{R}^k$ for $n, k>0$.
A recurring theme is the role of finite dihedral subgroups.
See \cite{HP} for a survey of recent progress.

\section{Must Finite Subgroups Have Periodic Cohomology?}\label{section:periodic}

There remains the key question of whether the group $(\mathbb{Z}/2\mathbb{Z})^2$ ever arises in this context.
Suppose that $Y$ is a $\mathrm{PD}_{2k}$-complex with $(2k-2)$-connected universal cover
 and virtually free fundamental group,
and that $\pi_1(Y)$ has a subgroup $C\cong(\mathbb{Z}/2\mathbb{Z})^2$.
Let $F$ be a free normal subgroup of finite index in $\pi_1(Y)$.
We may assume that $F<\pi_1(Y)^+$.
Then $Y$ has a finite cover $Y_{FC}$ with fundamental group $FC\cong{F}\rtimes{C}$.
As in Theorem \ref{oddorder}  some indecomposable factor $X$ of $Y_{FC}$
has fundamental group $F(r)\rtimes{C}$,
for some $r>1$.
(Since $\pi$ cannot be finite of order 4 and  
$C$ does not have periodic cohomology, $r\not=0$ or 1.)
We may assume that $\pi\cong\pi\mathcal{G}$, 
where $(\mathcal{G},\Gamma)$ is an indecomposable reduced finite graph of groups, with all vertex groups
$G_v\cong{C}$ and all edge groups $G_e$ of order 2.
In particular, every edge group is orientable, by Theorem \ref{Cinf}.

Since $\pi$ is virtually free it has a well-defined virtual Euler characteristic
\[
\chi^{virt}(\pi)=\chi(F(r))/|C|=\frac{1-r}4.
\]
We also have $\chi^{virt}(\pi)=\frac14|V|-\frac12|E|$, since  $(\mathcal{G},\Gamma)$ 
is indecomposable and reduced.
Moreover, $\chi(X)=2\chi^{virt}(\pi)$, 
by the multiplicativity of (virtual) Euler characteristic for finite covers (passage to finite-index subgroups),
and so $\chi^{virt}(\pi)\in\frac12\mathbb{Z}$.
Hence $|V|$ is even.

Suppose first that $X$ is not orientable. Then $\omega|_C\not=1$,
since $\omega(F)=1$.
Hence if $e$ and $f$ are two edges with $o(e)=o(f)=v$
then $G_e=G_f=\mathrm{Ker}(\omega|{G_v})$.
If $e\not=f$ then $C_\pi(G_e)$ contains a nonabelian free subgroup.
Hence there is at most one edge at each vertex.
Since $\Gamma$ is connected, there is just one edge $e$,
which must have distinct vertices, 
for otherwise $C_\pi(G_e)$ would have a nonabelian free subgroup.
Hence $\pi\cong{G_u*_{G_e}G_v}\cong(\mathbb{Z}/2\mathbb{Z})\times{D_\infty}$
has two ends, and so $\widetilde{X}\simeq{S^{2k-1}}$.
But then $C$ has periodic cohomology, which is false.
Therefore $X$ must be orientable.

Since $X$ is finitely covered by $\#^r(S^3\times{S^1})$, $H_2(X;\mathbb{Q})=0$,
and so $\chi(X)$ is even.
Hence $\chi^{virt}(\pi)$ is integral.
Moreover, if there is a vertex $v$ of valency $\leq2$ then 
there is an epimorphism $f:\pi\to\mathbb{Z}/2\mathbb{Z}$
which is nontrivial on $G_e$, for all edges $e$ with $v$ as one vertex.
But then $\mathrm{Ker}(f)\cong\pi\widetilde{\mathcal{G}}$, 
where  $(\widetilde{\mathcal{G}},\Gamma)$ is a graph of groups with all vertex groups of order 2 and with trivial edge
groups for the edges with $v$ as one vertex.
This is impossible if the double cover is orientable.
If there is a vertex $w$ with valency $>3$ then two edges with $w$ as one vertex
have the same edge group $G_e<G_w$, and $C_\pi(G_e)$  contains a nonabelian free subgroup.
Therefore each vertex of $\Gamma$ has valency 3, so $2|E|=3|V|$.
In summary, 
\[
X~is~orientable, ~vertices~of~\Gamma~have~valence~3,~|V|~is ~even~and~r=1+2|V|\equiv1~mod~(4).
\]

The simplest example meeting these criteria has $V=\{v,w\}$ and $E=\{a,b,c\}$, 
with each edge having origin $v$ and target $w$.
Then
\[
G_v=\langle{a,b,c}\mid{a^2=b^2=c^2=1,~c=ab}\rangle
\]
and
\[
G_w=\langle{a',b'},c'\mid(a')^2=(b')^2=(c')^2=1,~c'=a'b'\rangle.
\]
The edge groups are $G_a=\langle{a}\rangle$, $G_b=\langle{b}\rangle$ and $G_c=\langle{ab}\rangle$,
as subgroups of $G_v$,
and for each edge $x$ the monomorphism $\phi_x:G_x\to{G_w}$ is given by $\phi_x(x)=x'$.
The edge $a$ is a maximal tree in $\Gamma$.
Let $t,u$ be stable letters corresponding to the other edges.
Then $\pi=\pi\mathcal{G}$ has the presentation
\[\langle{G_v,G_w,t,u}\mid{a'=a, ~b'=tbt^{-1},~a'b'=uabu^{-1}}\rangle,\]
which simplifies to
\[
\langle{a,b,t,u}\mid{a^2=b^2=(ab)^2=1},~atbt^{-1}=tbt^{-1}a=uabu^{-1}\rangle.
\]
Is this the fundamental group of an orientable $\mathrm{PD}_{2k}$-complex with $(2k-2)$-connected universal cover?
Can the arguments of \S2 of \cite{Cr} be tweaked to rule this out?


\end{document}